\definecolor{verylight}{gray}{0.97}
\definecolor{light}{gray}{0.9}
\definecolor{medium}{gray}{0.85}
\definecolor{dark}{gray}{0.6}
 \def\NZQ{\mathbb}               
 \def\ZZ{{\NZQ Z}}
 \def\frk{\mathfrak}               
 \def\mm{{\frk m}}
 \def\Jc{{\mathcal J}}
 \def\Rc{{\mathcal R}}
 \def\G{{\mathcal G}}
 \def\P{{\mathcal P}}
\def\Cc{{\mathcal C}}
 \def\xb{{\mathbf x}}
 \def\fb{{\mathbf f}}
 \def\opn#1#2{\def#1{\operatorname{#2}}} 
 \opn\chara{char} \opn\length{\ell} \opn\pd{pd} \opn\rk{rk}
 \opn\projdim{proj\,dim} \opn\injdim{inj\,dim} \opn\rank{rank}
 \opn\depth{depth} \opn\grade{grade} \opn\height{height}
 \opn\embdim{emb\,dim} \opn\codim{codim}
 \opn\Tr{Tr} \opn\bigrank{big\,rank}
 \opn\superheight{superheight}\opn\lcm{lcm}
 \opn\trdeg{tr\,deg}
 \opn\reg{reg} \opn\lreg{lreg} \opn\ini{in} \opn\lpd{lpd}
 \opn\size{size} \opn\sdepth{sdepth}
 \opn\link{link}\opn\fdepth{fdepth}\opn\lex{lex}
 \opn\tr{tr}
 \opn\type{type}
 \opn\gap{gap}
 \opn\arithdeg{arith-deg}
 \opn\revlex{revlex}
 \opn\cut{cut}
 \opn\div{div} \opn\Div{Div} \opn\cl{cl} \opn\Cl{Cl}
 \opn\Spec{Spec} \opn\Supp{Supp} \opn\supp{supp} \opn\Sing{Sing}
 \opn\Ass{Ass} \opn\Min{Min}\opn\Mon{Mon}
 \opn\Ann{Ann} \opn\Rad{Rad} \opn\Soc{Soc}
 \opn\Im{Im} \opn\Ker{Ker} \opn\Coker{Coker} \opn\Am{Am}
 \opn\Hom{Hom} \opn\Tor{Tor} \opn\Ext{Ext} \opn\End{End}
 \opn\Aut{Aut} \opn\id{id}
 \opn\nat{nat}
 \opn\pff{pf}
 \opn\Pf{Pf} \opn\GL{GL} \opn\SL{SL} \opn\mod{mod} \opn\ord{ord}
 \opn\Gin{Gin} \opn\Hilb{Hilb}\opn\sort{sort}
 \opn\PF{PF}\opn\Ap{Ap}
 \opn\mult{mult}
 \opn\bight{bight}
 \opn\aff{aff}
 \opn\relint{relint} \opn\st{st}
 \opn\lk{lk} \opn\cn{cn} \opn\core{core} \opn\vol{vol}  \opn\inp{inp} \opn\nilpot{nilpot}
 \opn\link{link} \opn\star{star}\opn\lex{lex}\opn\set{set}
 \opn\width{wd}
 \opn\Fr{F}
 \opn\QF{QF}
 \opn\G{G}
 \opn\type{type}\opn\res{res}
 \opn\conv{conv}
 \opn\Deg{Deg}
 \opn\Sym{Sym}
 \opn\gr{gr}
 \def\pot#1#2{#1[\kern-0.28ex[#2]\kern-0.28ex]}
 \opn\dirlim{\underrightarrow{\lim}}
 \opn\inivlim{\underleftarrow{\lim}}
 \let\union=\cup
 \let\sect=\cap
 \let\iso=\cong
 \let\Union=\bigcup
 \let\Sect=\bigcap
 \let\to=\rightarrow
 \def\Implies{\ifmmode\Longrightarrow \else
         \unskip${}\Longrightarrow{}$\ignorespaces\fi}
 \def\implies{\ifmmode\Rightarrow \else
         \unskip${}\Rightarrow{}$\ignorespaces\fi}
 \def\iff{\ifmmode\Longleftrightarrow \else
         \unskip${}\Longleftrightarrow{}$\ignorespaces\fi}
 \newtheorem{Theorem}{Theorem}[section]
 \newtheorem{Lemma}[Theorem]{Lemma}
 \newtheorem{Corollary}[Theorem]{Corollary}
 \newtheorem{Proposition}[Theorem]{Proposition}
 \newtheorem{Remark}[Theorem]{Remark}
 \newtheorem{Example}[Theorem]{Example}
 \newtheorem{Definition}[Theorem]{Definition}
 \let\epsilon\varepsilon
 \let\kappa=\varkappa
 \def\qed{\ifhmode\textqed\fi
       \ifmmode\ifinner\quad\qedsymbol\else\dispqed\fi\fi}
 \def\textqed{\unskip\nobreak\penalty50
        \hskip2em\hbox{}\nobreak\hfil\qedsymbol
        \parfillskip=0pt \finalhyphendemerits=0}
 \def\dispqed{\rlap{\qquad\qedsymbol}}
 \opn\dis{dis}
 \def\pnt{{\raise0.5mm\hbox{\large\bf.}}}
 \opn\Lex{Lex}
\begin{document}

 \title{Graded ideals of K\"onig type}

\author {J\"urgen Herzog, Takayuki Hibi and  Somayeh Moradi }

\address{J\"urgen Herzog, Fachbereich Mathematik, Universit\"at Duisburg-Essen, Campus Essen, 45117
Essen, Germany} \email{juergen.herzog@uni-essen.de}

\address{Takayuki Hibi, Department of Pure and Applied Mathematics,
Graduate School of Information Science and Technology, Osaka
University, Suita, Osaka 565-0871, Japan}
\email{hibi@math.sci.osaka-u.ac.jp}

\address{Somayeh Moradi, Department of Mathematics, School of Science, Ilam University,
P.O.Box 69315-516, Ilam, Iran}
\email{so.moradi@ilam.ac.ir}

\dedicatory{ }
 \keywords{binomial ideals, Cohen-Macaulay rings, ideals of K\"onig type, systems of parameters}
 \subjclass[2010]{Primary 13H10, 13C15;  Secondary 05C25}
\thanks{The second author was supported by JSPS KAKENHI 19H00637.}

\begin{abstract}
Inspired by the notion of K\"onig graphs we introduce graded ideals of K\"onig type with respect to a monomial order $<$. It is shown that if $I$ is of K\"onig type, then the Cohen--Macaulay property of $\ini_<(I)$ does not depend on the characteristic of the base field. This happens to be the case also for $I$ itself when $I$ is a binomial edge ideal. Attached to an ideal of K\"onig type is a sequence of linear forms, whose elements are variables or differences of variables. This sequence is a system of parameters for $\ini_<(I)$,  and is a potential system of parameters for $I$ itself. We study in detail the ideals of K\"onig type among the   edge ideals, binomial edge ideals and the toric ideal of a Hibi ring and use the K\"onig property to determine explicitly their canonical module.
\end{abstract}

\maketitle

\setcounter{tocdepth}{1}

\section*{Introduction}

For a graded ideal $I$ of a polynomial ring $S=K[x_1,\ldots,x_n]$ over a field $K$, the Cohen-Macaulay property of the ring $S/I$ may depend on the characteristic of the base field $K$ in general.
Even when we restrict to the case that $I$ is the edge ideal of a graph $G$, this dependency may occur. Such an example is given in \cite{Katzman}. Nevertheless, for some families of graphs it is shown that the Cohen-Macaulayness of $S/I(G)$ is independent of $K$.
When $G$ is a bipartite graph, the first and second authors of this paper in \cite{HH1} presented a combinatorial condition on the graph $G$ equivalent to $S/I(G)$ be Cohen-Macaulay. In \cite{CRT} characterizing Cohen-Macaulay graphs was extended to very well-covered graphs which implies that the Cohen-Macaulay property of $S/I(G)$ is independent of $K$, when $G$ is a very well-covered graph (see also \cite{CCR}). If $G$ is a graph with no isolated vertex, $G$ is very well-covered if and only if it is an unmixed K\"onig graph. Recall that a graph $G$ is called K\"onig, if its matching number coincides with its vertex cover number.
Motivated by this, we look in a more general frame for conditions on a graded ideal $I\subset S$ for which the Cohen-Macaulay property of the ring $S/I$ is independent of $K$ and to this aim we introduce graded ideals of K\"onig type which generalize edge ideals of K\"onig graphs.

 In \cite{HM}, the first and third authors studied monomial ideals of K\"onig type and characterized them in terms of the existence of systems of parameters for $S/I$ which consist of elements of the form $x_i-x_j$.
Bringing to mind the characteristic independency of K\"onig graphs together with this result, this natural question arises: for a graded ideal $I$, does the existence of systems of parameters in a particular form for $S/I$ imply the characteristic independence of Cohen-Macaulayness?
We investigate this question when $S/I$ admits a special system of parameters. By a special system of parameters we mean a system of parameters $s_1,\ldots,s_d$, where each $s_j$ is either of the form $x_i$ or of the form $x_i-x_j$.
Indeed we study binomial ideals whose quotient rings admit special systems of parameters. Here we call an ideal $I$ a binomial ideal if the generators of $I$ are  monomials or binomials of the form $u-v$ with $u$ and $v$ monomials. This includes the case that all generators are monomials or all generators are binomials.
It turns out that for a binomial ideal $I$, the existence of a special system of parameters for $S/I$ gives a positive answer to the above question. For binomial edge ideals of K\"onig type as well as monomial ideals of K\"onig type, such special systems of parameters always exist.
We study further monomial ideals and binomial edge ideals of K\"onig type and investigate when the defining ideal of a Hibi ring is of K\"onig type.

The paper proceeds as follows. In the first section we define a graded ideal $I$ to be of K\"onig type with respect to a homogeneous sequence $f_1,\ldots,f_h$ coming from a minimal generating set of $I$  and an order $<$
if $h=\height I$ and $\ini_<(f_1),\ldots,\ini_<(f_h)$ is a regular sequence. Attached to this sequence and this order we consider a natural sequence of linear forms of length $d=\dim S/I$ which has the potential to be a special system of parameters for $S/I$.
In Theorem~\ref{independent}, we show that for a binomial ideal $I$ if $S/I$ admits a special system of parameters, the Cohen-Macaulay property of $S/I$ does not depend on the base field. Using this theorem and the characterization of monomial ideals of K\"onig type, we deduce that
for any graded ideal of K\"onig type with respect to $<$, the Cohen--Macaulayness of $\ini_<(I)$ does not depend on the base field (see Corollary~\ref{initial}).

In Section~\ref{two}, we concentrate on monomial ideals of K\"onig type.  In Theorem~\ref{IB},  a combinatorial condition is given which is equivalent to the Cohen-Macaulayness of $S/I$, when $I$ is a monomial ideal of  K\"onig type. This generalizes \cite[Proposition 3.2]{CRT}.
In Theorem~\ref{canonical2}, for a  Cohen-Macaulay K\"onig graph,  combinatorial descriptions for the canonical module and the Cohen-Macaulay type of $S/I(G)$ are presented.
Next in Theorem~\ref{characterizekoenig}, for a K\"onig graph $G$ we show that $S/I(G)$ is Cohen--Macaulay if and only if $\alpha(G)=|\Delta_{G_0}|$, where $\alpha(G)$ is the number of vertex covers of $G$ of cardinality $\tau(G)$, $G_0$ is some graph attached to $G$ and $\Delta_{G_0}$ is the independence complex of $G_0$.
When $I$ is the edge ideal of an unmixed graph $G$ without isolated vertices, it is known that $I$ is of K\"onig type if and only if $G$ is a very well-covered graph (see~\cite[Lemma 17]{CCR}). In Theorem~\ref{very}, we consider an unmixed monomial ideal of K\"onig type $I$ and show that the polarization $I^\wp$ of $I$ is very well-covered. An example follows which shows that the converse may not hold in general.

In Section~\ref{three}, we study binomial edge ideals of K\"onig type. We call a subgraph $P$ of $G$ a semi-path if each component of $P$ is a path graph.
In Theorem~\ref{lenghtsemipaths} it is shown that for a graph $G$ on $[n]$ the binomial edge ideal $J_G$ is of  K\"onig type if and only if there exists a semi-path in $G$ of length $2n-d$, where $d=\dim S/J_G$. There is a close relation between binomial edge ideals of K\"onig type and traceable graphs , i.e., graphs which contain a path meeting all vertices of the graph. Indeed if $G$ is traceable,  then $J_G$ is of K\"onig type.  Also if $J_G$ is of K\"onig type and unmixed, then $G$ is traceable (see Proposition~\ref{somayehchanged}).  Giving a special system of parameters for a binomial edge ideal of K\"onig type in Theorem~\ref{parametersystem}, we recover a result
in  ~\cite{BMS} which indicates that for a traceable graph, the Cohen-Macaulay property of $S/J_G$ does not depend on the base field. We close this section by a proposition which gives a description for the canonical module of  $S/J_G$, when $G$ is traceable and $J_G$ is Cohen-Macaulay.

Finally in Section~\ref{four}, we study when the toric ideal of a Hibi ring $\Rc_K[L]$ is of K\"onig type with respect to $f_{i, j}$ and $<_{\rm rev}$, where $f_{i, j} = x_ix_j - (x_i \wedge x_j)(x_i \vee x_j)$ for which $x_i$ and $x_j$ are incomparable in $L$.
We show in Theorem~\ref{binomialhibi} that  $I_L$ is of K\"onig type with respect to $f_{i, j}$ and $<_{\rm rev}$ if and only if
$L$ is thin and this is the case when the incomparability graph ${\rm incom(L)}$ of $L$ is bipartite.

\section{Graded ideals of K\"onig type and parameter systems}
\label{one}

We recall some concepts about graphs.
Let $G$ be a finite simple graph with the vertex set $V(G)$ and the edge set $E(G)$.
A {\em matching} of $G$ is a subset $\{e_1,\ldots,e_m\}$ of $E(G)$ such that $e_i$'s are pairwise disjoint.
The maximum cardinality of matchings of $G$ is called the {\em matching number} of $G$ and is denoted by $m(G)$. A {\em vertex cover} of $G$ is a subset $C$ of $V(G)$ which has at least one element from each edge of $G$. A vertex cover $C$ is called a {\em minimal vertex cover} if no proper subset of $C$ is a vertex cover.
The minimum cardinality of vertex covers of $G$ is called the {\em vertex cover number} of $G$ and is denoted by $\tau(G)$.

Let $G$ be a graph on $[n]$. The edge ideal $I(G)$  of $G$ is the monomial ideal  whose generators are the monomials $x_ix_j$ for which $\{i,j\}\in E(G)$. It is easy to see that the  height of $I(G)$  is equal to $\tau(G)$  and $m(G)$ is the maximal length of a regular sequence  among the generators of $I(G)$. It follows that $m(G)\leq \tau(G)$ since $m(G)\leq \grade(I(G))=\height I(G)=\tau(G)$.  In 1931,  K\"onig showed that $m(G)= \tau(G)$, if $G$ is bipartite.  Now, any graph $G$, satisfying $m(G)= \tau(G)$ is called a {\em K\"onig graph}.

\medskip
This concept,  as defined for graphs,  leads us to the following

\begin{Definition}
{\em Let $K$ be field, $S=K[x_1,\ldots,x_n]$ be the polynomial ring in $n$ variables over $K$,  and let $I\subset S$ be a graded ideal of height $h$. We say that $I$ is of {\em K\"onig type},  if there exists a sequence $\fb=f_1,\ldots,f_h$ of homogeneous polynomials which forms part of a minimal system of generators of $I$ and a monomial order $<$ on $S$ such that   $\ini_<(f_1),\ldots,\ini_<(f_h)$ is a regular sequence.
}
\end{Definition}

If we want to be more specific we say that $I$ is of K\"onig type with respect to the sequence $\fb$ and the monomial order $<$.

\begin{Remark}
{\em If $K$ is infinite, one can always find  a homogeneous regular sequence $f_1,\ldots , f_h\in I$. The condition that $\ini_<(f_1),\ldots,\ini_<(f_h)$ be a regular sequence for some monomial order and part of a minimal system of generators is much stronger. This is why not all graded ideals are of K\"onig type.}
\end{Remark}

\begin{Example}
{\em Consider the ideal $I=(f_1,f_2)$ with $f_1=x_1x_2-x_2x_3$ and  $f_2=x_1x_3-x_3^2$. Then $I$ is of height 2. The only two monomials in the support of $f_1$ and $f_2$   which form a regular sequence are the monomials $x_1x_2$ and $x_3^2$.  But there is no monomial order $<$ with $\{\ini_<(f_1), \ini_<(f_2)\}= \{x_1x_2, x_3^2\}$. Thus   $I$ is not of K\"onig type. }
\end{Example}

Suppose now that $I$ is of K\"onig type  with respect to $\fb=f_1,\ldots,f_h$ and  $<$, and  let $u_i=\ini_<(f_i)$ for all $i$.
Then we define  a sequence  $C$  of linear forms attached to $\fb$ and $<$ as follows:
let $$A=\{x_i\:\; x_i\nmid u_j \text{ for all $j$}\},$$ and for each $j$ let $$B_j=\{x_i\:\; x_i\mid u_j\}.$$
 Since $u_1,\ldots,u_h$ is a regular sequence, the sets $B_j$ are pairwise disjoint and of course $A\sect B_j=\emptyset$ for all $j$. Therefore,  $n=|A|+\sum_{j=1}^h|B_j|=n$.

Finally for each $j$  let $i_j$ be the smallest integer with $x_{i_j}\in B_j$ and let $$C_j=\{x_k-x_{i_j}\:\; x_k\in B_j, k\neq i_j\}.$$  Then we set
\begin{eqnarray}
\label{goodsequence}
C=A\union C_1\union \cdots \union C_h.
\end{eqnarray}

Note that $|C|=n-h=\dim S/I$, because   $|C_j|=|B_j|-1$ for $j=1,\ldots,h$. Hence $C$ is a potential system of parameters of $S/I$ attached to $f_1,\ldots,f_h$ and $<$.

\begin{Example}{\em
Let $R=K[x_1,x_2]/I$, where $I=(x_1x_2-x_2^2)$. Then $I$ is of  K\"onig type with respect to $f= x_1x_2-x_2^2$ for any monomial order. If $x_1>x_2$, then the
attached sequence is $x_2-x_1$ and $R/(x_2-x_1)R\iso K[x_1]$. On the other hand,   if $x_2>x_1$, then the attached sequence is $x_1$, and $R/(x_1)R\iso K[x_2]/(x_2^2)$. Hence in the second case, the attached sequence (\ref{goodsequence}) is a system of parameters of $R$, while  in the first case it is not.}
\end{Example}

Let $R$ be a standard graded $K$-algebra of dimension $d$. Having in mind attached sequences of linear forms attached to  ideals of  K\"onig type, we call a  system of parameters $s_1,\ldots,s_d$ of $R$  {\em special} with respect to a given $K$-basis $x_1,\ldots,x_n$ of $R_1$, if each $s_j$ is either of the form $x_i$ or of the form $x_k-x_i$.

For binomial ideals,  special systems of parameters have nice properties. Here we  call an ideal $I$ a {\em binomial ideal} if the generators of $I$ are  monomials and binomials of the form $u-v$ with $u$ and $v$ monomials. This includes the case that all generators are monomials or all generators are binomials.

\begin{Theorem}
\label{independent}
Let $I\subset S$ be a binomial ideal and suppose that $R=S/I$ admits  a special system of parameters. Then the Cohen-Macaulay property of $R$ does not depend on the base field.
\end{Theorem}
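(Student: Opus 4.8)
The plan is to reduce the Cohen--Macaulay property of $R=S/I$ to an equality of Hilbert series, and then to exploit that, for binomial ideals in the sense of differences $u-v$, these Hilbert series are computed by a Gröbner basis algorithm which runs identically over every field.

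Write $\underline s=s_1,\dots,s_d$ for the special system of parameters and set $\bar R=R/(\underline s)R$. Since $\underline s$ is a homogeneous system of parameters of length $d=\dim R$, the ring $R$ is Cohen--Macaulay if and only if $\underline s$ is a regular sequence on $R$. First I would record the standard Hilbert series estimate: for a graded module $M$ and a linear form $\theta$, the exact sequence $0\to(0:_M\theta)(-1)\to M(-1)\xrightarrow{\theta}M\to M/\theta M\to 0$ gives $H_{M/\theta M}(t)=(1-t)H_M(t)+tH_{(0:_M\theta)}(t)$, hence $H_{M/\theta M}(t)\ge(1-t)H_M(t)$ coefficientwise, with equality precisely when $\theta$ is a nonzerodivisor on $M$. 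Iterating along $\underline s$ yields $H_{\bar R}(t)\ge(1-t)^dH_R(t)$ coefficientwise, with equality if and only if $\underline s$ is a regular sequence. Thus $R$ is Cohen--Macaulay if and only if $H_{\bar R}(t)=(1-t)^dH_R(t)$, a condition involving only the two Hilbert series.

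It therefore suffices to prove that $H_R(t)$ and $H_{\bar R}(t)$ are independent of $K$, and this is where the hypothesis that $I$ is binomial (generators $u-v$, i.e. with coefficients $\pm1$) is decisive. Fix a monomial order and run Buchberger's algorithm on the given binomial generators. The key observation is that the $S$-polynomial of two binomials $u-v$, $u'-v'$ is again a binomial with coefficients in $\{0,\pm1\}$, and that reducing such a binomial modulo $\pm1$-binomials again produces a $\pm1$-binomial (or a monomial, or $0$) without ever dividing by a coefficient. Consequently every polynomial occurring during the computation is a $\pm1$-binomial or a monomial, and each branch of the algorithm---which term is leading, and whether a given $S$-polynomial reduces to $0$---is governed purely by the monomials and the order, not by $K$. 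Hence the reduced Gröbner basis is the same set of $\pm1$-binomials and monomials over every field, $\ini_<(I)$ is a fixed monomial ideal, and $H_R(t)=H_{S/\ini_<(I)}(t)$ is independent of $K$. Since reducing modulo $\underline s$ amounts to the substitution setting the variables occurring in $\underline s$ equal or to zero, the image $\bar I$ of $I$ in the corresponding polynomial ring $T$ in $n-d$ variables is again generated by monomials and $\pm1$-binomials; the same argument applied to $\bar I$ shows $H_{\bar R}(t)$ is independent of $K$. In particular $\dim R$, and the property that $\underline s$ is a system of parameters, are independent of $K$, so the whole set-up transfers to every field.

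Combining the two parts, the equality $H_{\bar R}(t)=(1-t)^dH_R(t)$ holds over one field if and only if it holds over every field, and this equality is equivalent to the Cohen--Macaulayness of $R$; the theorem follows. I expect the main obstacle to be the middle step: verifying carefully that the Gröbner computation is genuinely characteristic-free for $\pm1$-binomials, i.e. that no coefficient outside $\{0,\pm1\}$ is ever created and that the normal-form reductions, hence the standard monomials, agree over $\QQ$ and over each $\FF_p$. Equivalently, one shows that the standard monomials form a $\ZZ$-basis of $\ZZ[\xb]/I_0$, where $I_0$ is the integral model of $I$, so that this algebra is $\ZZ$-flat and its Hilbert function is preserved under every base change $\ZZ\to K$.
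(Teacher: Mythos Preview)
Your proof is correct and follows essentially the same route as the paper. Both arguments (i) observe that modding out the special parameters sends the binomial ideal to another binomial ideal in fewer variables, (ii) reduce the Cohen--Macaulay question to a numerical identity involving Hilbert data of $R$ and $\bar R$, and (iii) show that the Hilbert series of a $\pm1$-binomial ideal is field-independent by tracing Buchberger's algorithm and seeing that only $\pm1$-binomials ever arise. The only cosmetic difference is the form of the CM criterion: the paper uses the multiplicity equality $e(R)=e(R/LR)$, whereas you use the equivalent coefficientwise Hilbert series equality $H_{\bar R}(t)=(1-t)^d H_R(t)$.
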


\begin{proof}
Let $L\subset S$  be the ideal generated by the special system of parameters. Then $S/L\iso T$ where $T$ is a polynomial ring over $K$ in $h$ variables, and $R/LR$  can be written as $T/J$,  where $J$ is again a binomial ideal because the generators of $J$ are binomials which are obtained from the binomials generating $I$ by replacing some variables by $0$ or identifying different variables.

Denote by $e(A)$ the multiplicity of a standard graded $K$-algebra $A$. Then  it is known (see for example \cite[Proposition 1.2]{HM}) that  $R$ is Cohen-Macaulay if and only if $e(R)=e(R/LR)$ (in which case the system of parameters forms a regular sequence). The multiplicities $e(R)$ and $e(R/LR)$ are determined by their corresponding Hilbert series. Thus  Lemma~\ref{takayuki} completes the proof.
\end{proof}

\begin{Lemma}
\label{takayuki}
Let $I\subset S$ be a binomial ideal. Then the Hilbert series of $S/I$ does not depend on the base field.
\end{Lemma}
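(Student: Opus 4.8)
The plan is to pass to an initial ideal and reduce the whole question to a combinatorial count that is manifestly insensitive to $K$. Fix any monomial order $<$ on $S$. For every field the set of monomials not lying in $\ini_<(I)$ forms a $K$-basis of $S/I$ in each degree, so the Hilbert series of $S/I$ equals that of $S/\ini_<(I)$; and the latter, being the Hilbert series of a monomial ideal, is just $\sum_d c_d t^d$ where $c_d$ is the number of degree-$d$ monomials outside $\ini_<(I)$. It therefore suffices to prove that, \emph{as a monomial ideal}, $\ini_<(I)$ is one and the same for every base field $K$.

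To establish this I would run Buchberger's algorithm on the given generators and track the shape of every polynomial it produces. By hypothesis the starting elements are monomials and \emph{balanced binomials}, i.e. differences $x^\alpha-x^\beta$ of two monomials carrying coefficients $+1$ and $-1$. The crucial point is that this shape is preserved throughout the computation. Indeed, the $S$-polynomial of $u_1-v_1$ and $u_2-v_2$ equals
\begin{equation*}
\frac{m}{u_2}\,v_2-\frac{m}{u_1}\,v_1,\qquad m=\lcm(u_1,u_2),
\end{equation*}
again a difference of two monomials with coefficients $+1$ and $-1$, while the $S$-polynomials involving a monomial are themselves monomials. Likewise a single reduction of a balanced binomial $c_\gamma x^\gamma+c_\delta x^\delta$ (with $\{c_\gamma,c_\delta\}=\{+1,-1\}$ and $x^\gamma$ its leading term) by a balanced binomial $x^u-x^v$ subtracts $c_\gamma x^{\gamma-u}(x^u-x^v)$ and yields $c_\delta x^\delta+c_\gamma x^{\gamma-u+v}$, once more a balanced binomial, or a monomial when the reducer is a monomial. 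Since the two coefficients of every intermediate binomial always have opposite signs, a collision of its two monomials produces coefficient $c_\gamma+c_\delta=0$, so no coefficient outside $\{0,\pm 1\}$ can ever appear.

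This invariant makes the algorithm field-independent. A coefficient $\pm 1$ is nonzero in every field, so the leading term of each polynomial depends only on $<$ and on the monomial supports, not on $K$; and a reduction yields $0$ precisely when two exponent vectors coincide, a condition that again does not involve $K$. Hence the very same sequence of polynomials is generated, and the very same $S$-polynomials reduce to zero, over every base field; Buchberger's criterion then terminates with one and the same Gröbner basis, whose leading terms generate one and the same $\ini_<(I)$ for all $K$. Together with the first paragraph this proves the lemma. The one step demanding genuine care is exactly the verification of the balanced-binomial invariant: it is what excludes the a priori possibility that a cancellation creates the coefficient $2$ — which would vanish in characteristic $2$ but not otherwise — and thereby makes $\ini_<(I)$, and with it the Hilbert series, depend on $\chara K$.
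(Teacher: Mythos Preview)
Your proof is correct and follows essentially the same route as the paper: reduce to showing that $\ini_<(I)$ is field-independent, then run Buchberger's algorithm on the given monomial/binomial generators and observe that every $S$-polynomial and every reduction step stays within the class of monomials and balanced binomials $u-v$, so that all coefficients remain in $\{0,\pm 1\}$ and the entire computation is insensitive to $K$. Your explicit isolation of the ``balanced-binomial invariant'' and the remark on why a coefficient~$2$ can never arise make the field-independence more transparent than the paper's exposition, but the underlying argument is the same.
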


\begin{proof}
First of all, it is known \cite[Corollary 6.1.5]{HH} that, for an arbitrary monomial order $<$ on $S$, the Hilbert series of $S/I$ coincides with that of $S/\ini_<(I)$.

Now, recall from \cite[Theorem 2.2.1]{HH} what the division algorithm is.  Let $f = u_0 - v_0$ and $g = u - v$ be binomials of $S$, where each of $u_0, v_0, u, v$ are monomials.  It may happen that either $v_0 = 0$ or $v = 0$.  Let $\ini_<(g) = u$.  
Suppose that either $u_0$ or $v_0$ is divisible by $u$ and, say, $u_0 = wu$ for some monomial $w$.  Then dividing $f$ by $g$ is the process
\[
f = u_0 - v_0 = wu - v_0 = w(g + v) - v_0 = wg + (wv - v_0)
\]
and the binomial $f_1 = wv - v_0 = u_1 - v_1$, where $u_1 = wv$ and $v_1 = v_0$, is called a remainder.  Let $f \rightarrow_g f_1$ denote the procedure of division.  If, furthermore, $f_1 \neq 0$ and if either $u_1$ or $v_1$ is divisible by $u$, then dividing $f_1$ by $g$ yields $f_2$.  Thus
\[
f \rightarrow_g f_1 \rightarrow_g f_2.
\]
A division algorithm guarantees that, after $N$ steps of divisions, a remainder $f_N = u_N - v_N$ possesses the property that either $f_N = 0$ or, neither $u_N$ nor $v_N$ is divisible by $u$.  Clearly the process of divisions
\[
f \rightarrow_g f_1 \rightarrow_g f_2 \rightarrow_g \cdots \rightarrow_g f_N
\]
is independent of the base field.  More generally, if $g_1, \ldots, g_s$ are binomials of $S$, one can divide $f$ by $g_1, \ldots, g_s$ and $N$ steps of divisions
\[
f \rightarrow_{g_{i_1}} f_1 \rightarrow_{g_{i_2}} f_2 \rightarrow_{g_{i_3}} \cdots \rightarrow_{g_{i_N}} f_N
\]
yield a binomial $f_N = u_N - v_N$ for which either $f_N = 0$ or, neither $u_N$ nor $v_N$ can be divisible by each of $\ini_<(g_1), \ldots, \ini_<(g_s)$.  We say that $f$ reduces to $f_N$ with respect to $g_1, \ldots, g_s$.  It may happen that another process of divisions
\[
f \rightarrow_{g_{i'_1}} f_1 \rightarrow_{g_{i'_2}} f_2 \rightarrow_{g_{i'_3}} \cdots \rightarrow_{g_{i'_{N'}}} f_{N'}
\]
yields a binomial $f_{N'}$ with $f_N \neq f_{N'}$ to which $f$ reduces with respect to $g_1, \ldots, g_s$.  Especially, one says that $f$ reduces to $0$ with respect to $g_1, \ldots, g_s$ if one can find a process of divisions of the form
\[
f \rightarrow_{g_{j_1}} f_1 \rightarrow_{g_{j_2}} f_2 \rightarrow_{g_{j_3}} \cdots \rightarrow_{g_{j_{N''}}} 0.
\]
Since the process of divisions is independent of the base field, it follows that the property that $f$ reduces $0$ with respect to $g_1, \ldots, g_s$ is independent of the base field.

We turn to the discussion of Buchberger criterion \cite[Theorem 2.3.2]{HH} together with Buchberger algorithm \cite[pp.~37--38]{HH}.  Let $I = (f_1, \ldots, f_s)$, where each $f_i = u_i - v_i$ is a binomial with $\ini_<(f_i)=u_i$.  The $S$-polynomial $S(f_i, f_j)$ is the binomial
\[
S(f_i, f_j) = u f_i - v f_j = v v_j - u v_i,
\]
where $u$ and $v$ are monomials with $uu_i = vu_j$ for which $uu_i = vu_j$ is the least common multiple of $u_i$ and $u_j$.  Buchberger criterion guarantees that $\{f_1, \ldots, f_s\}$ is a Gr\"obner basis of $I$ with respect to $<$ if and only if each of the $S$-polynomials $S(f_i, f_j)$ with $1 \leq i < j \leq s$ reduces to $0$ with respect to $f_1, \ldots, f_s$.  Thus, if $\{f_1, \ldots, f_s\}$ is {\em not} a Gr\"obner basis of $I$, then there is $S(f_i, f_j)$ which reduces to a binomial $f_{s+1} = u_{s+1} - v_{s+1} \neq 0$ with $\ini_<(f_{s+1})=u_{s+1}$ with respect to $f_1, \ldots, f_s$, where neither $u_{s+1}$ nor $v_{s+1}$ can be divisible by each of $u_1, \ldots, u_s$.  Since $f_{s+1}$ belongs to $I$, one may consider $I = (f_1, \ldots, f_s, f_{s+1})$.  If each of the $S$-polynomials $S(f_i, f_j)$ with $1 \leq i < j \leq s + 1$ reduces to $0$ with respect to $f_1, \ldots, f_s, f_{s+1}$, then $\{f_1, \ldots, f_s, f_{s+1}\}$ is a Gr\"obner basis of $I$ with respect to $<$.  Buchberger algorithm guarantees that, after a finite number of repeating the procedure of adding a remainder to a system of binomial generators of $I$, one has $I = (f_1, \ldots, f_s, f_{s+1}, \ldots, f_{s+q})$ for which each of the $S$-polynomials $S(f_i, f_j)$ with $1 \leq i < j \leq s + q$ reduces to $0$ with respect to $f_1, \ldots, f_s, f_{s+1}, \ldots, f_{s+q}$ and $\{f_1, \ldots, f_s, f_{s+1}, \ldots, f_{s+q}\}$ is a Gr\"obner basis of $I$ with respect to $<$.  The procedure of Buchberger algorithm, which consists of the computation of $S$-polynomials and their remainders, is independent of the base field.  Thus in particular, the property that a system of generators of $I$ is a Gr\"obner basis of $I$ with respect to $<$ is independent of the base field.

It then follows that $S/\ini_<(I)$ is independent of the base field.  Since the Hilbert function of a monomial ideal is independent of the base field, the Hilbert function of $S/I$ is independent of the base field, as desired.
\end{proof}

For a monomial ideal $I$ we denote by $G(I)$ the unique minimal set of monomial generators of $I$.

\begin{Lemma}
\label{monomialkoenig}
Let $I$ be a monomial ideal of height $h$. Then $I$ is of K\"onig type for any monomial order, if and only if there exists a regular sequence of monomials $u_1,\ldots,u_h$ in $G(I)$.
\end{Lemma}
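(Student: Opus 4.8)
The plan is to reduce everything to the combinatorial fact that a sequence of monomials $v_1,\ldots,v_h$ of $S$ forms a regular sequence if and only if their supports are pairwise disjoint, that is, $\supp(v_i)\cap\supp(v_j)=\emptyset$ for $i\neq j$. I would first record this fact; it follows by induction on $h$, since when the supports are disjoint the associated primes of $(v_1,\ldots,v_k)$ are exactly the transversal primes $(x_{i_1},\ldots,x_{i_k})$ with $x_{i_\ell}\mid v_\ell$, and a monomial $v_{k+1}$ avoids all of them precisely when $\supp(v_{k+1})$ is disjoint from $\bigcup_\ell\supp(v_\ell)$.

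For the implication ($\Leftarrow$), suppose $u_1,\ldots,u_h\in G(I)$ is a regular sequence of monomials. Since the initial monomial of a monomial is the monomial itself, $\ini_<(u_i)=u_i$ for every monomial order $<$, so $\ini_<(u_1),\ldots,\ini_<(u_h)$ is a regular sequence of length $h=\height I$, and the $u_i$ form part of the minimal generating set $G(I)$. Hence $I$ is of K\"onig type with respect to $u_1,\ldots,u_h$ and any $<$.

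For ($\Rightarrow$), assume $I$ is of K\"onig type with respect to some sequence $f_1,\ldots,f_h$ coming from a minimal generating set and some order $<$. Because $I$ is a monomial ideal, every monomial in the support of an element of $I$ again lies in $I$; in particular each $\ini_<(f_i)$ is a monomial of $I$ and is therefore divisible by some $g_i\in G(I)$, say $\ini_<(f_i)=m_ig_i$ for a monomial $m_i$. The hypothesis says $\ini_<(f_1),\ldots,\ini_<(f_h)$ is a regular sequence, so by the fact above these monomials have pairwise disjoint supports; since $\supp(g_i)\subseteq\supp(\ini_<(f_i))$, the supports of $g_1,\ldots,g_h$ are pairwise disjoint as well, and in particular the $g_i$ are distinct, being nonconstant. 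Applying the fact once more, $g_1,\ldots,g_h$ is a regular sequence of $h$ monomials in $G(I)$, as required.

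The only point requiring real care is this last implication: the $f_i$ are allowed to be genuine homogeneous polynomials rather than monomials, and one must pass from them to honest minimal monomial generators. The disjoint-support criterion is exactly what makes this passage work, because the property of having pairwise disjoint supports survives replacing each $\ini_<(f_i)$ by any monomial divisor, and in particular by the generator $g_i\in G(I)$ that it contains.
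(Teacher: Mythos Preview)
Your proof is correct and follows essentially the same approach as the paper: both arguments observe that since $I$ is a monomial ideal the initial monomials $\ini_<(f_i)$ lie in $I$, pick minimal generators $g_i\in G(I)$ dividing them, and conclude that the $g_i$ again form a regular sequence. The only difference is that you spell out the disjoint-support criterion explicitly, whereas the paper simply asserts that divisors of a monomial regular sequence are again a regular sequence; your added detail is harmless and arguably clearer.
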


\begin{proof} Suppose $I$ is of K\"onig type, and let $f_1,\ldots, f_h$ be homogeneous polynomials such that $u_1=\ini_<(f_1),\ldots,u_h=
\ini_<(f_h)$ is a regular sequence for some  monomial order $<$. Since $I$ is a monomial ideal, it follows that $u_i\in I$ for all $i$.
Hence there exist $v_i\in G(I)$ with $v_i\mid u_i$ for all $i$.  The sequence $v_1,\ldots,v_h$ is again a regular sequence.

The converse direction is trivial.
\end{proof}

\begin{Lemma}
\label{remainskoenig}
Let $I\subset S$ be a graded ideal and $<$ a monomial order on $S$. If $I$ is of K\"onig  type with respect to $<$, then $\ini_<(I)$ is of K\"onig type.
\end{Lemma}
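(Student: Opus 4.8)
The plan is to reduce the statement to Lemma~\ref{monomialkoenig}. Since $\ini_<(I)$ is a monomial ideal, that lemma tells us it is of K\"onig type (for \emph{any} monomial order, and in particular for $<$) precisely when $G(\ini_<(I))$ contains a regular sequence of monomials whose length equals $\height \ini_<(I)$. So two things must be checked: that $\height \ini_<(I)=h$, and that $G(\ini_<(I))$ contains a regular sequence of length $h$.

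For the height, I would invoke the fact (already used in the proof of Lemma~\ref{takayuki}) that $S/I$ and $S/\ini_<(I)$ share the same Hilbert series; hence they have the same dimension, and therefore $\height \ini_<(I)=\height I=h$.

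For the regular sequence, I would start from the data defining K\"onig type: by hypothesis the monomials $u_i=\ini_<(f_i)$, for $i=1,\dots,h$, form a regular sequence lying in $\ini_<(I)$. Each $u_i$ is then divisible by some minimal generator $w_i\in G(\ini_<(I))$. The point is that $w_1,\dots,w_h$ is again a regular sequence: a sequence of monomials is regular exactly when the monomials have pairwise disjoint supports, and since $\supp(w_i)\subseteq\supp(u_i)$, disjointness of the supports of the $u_i$ passes to the $w_i$. Thus $w_1,\dots,w_h\in G(\ini_<(I))$ is a regular sequence of length $h=\height\ini_<(I)$, and Lemma~\ref{monomialkoenig} completes the argument.

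The only genuine subtlety, and the step I would treat most carefully, is the replacement of the $u_i$ (which need not be minimal generators of $\ini_<(I)$) by minimal generators $w_i$ while preserving the regular-sequence property. This is exactly the move already carried out inside the proof of Lemma~\ref{monomialkoenig}, resting on the characterization of monomial regular sequences by pairwise disjoint supports; everything else is routine bookkeeping.
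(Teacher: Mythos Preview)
Your proof is correct and follows essentially the same route as the paper: both establish $\height \ini_<(I)=\height I$ via the equality of Hilbert series (the paper cites \cite[Theorem~3.3.4]{HH}), and both then invoke Lemma~\ref{monomialkoenig} on the regular sequence $\ini_<(f_1),\ldots,\ini_<(f_h)$ inside the monomial ideal $\ini_<(I)$. The only difference is that you spell out the passage from the $u_i$ to minimal generators $w_i$, whereas the paper leaves this implicit in the citation of Lemma~\ref{monomialkoenig}, whose proof already contains exactly that step.
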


\begin{proof}
Since $\dim S/I=\dim S/\ini_<(I)$ (see \cite[Theorem 3.3.4]{HH}), it follows that $h=\height I=\height \ini_<(I)$.  Suppose that $I$ is of K\"onig type. Then there exist homogeneous polynomials  $f_1,\ldots,f_h\in I$ such that $\ini_<(f_1),\ldots,\ini_<(f_h)$ is a regular sequence. Hence by Lemma~\ref{monomialkoenig},  $\ini_<(I)$ is  of K\"onig type.
\end{proof}

The converse of Lemma~\ref{remainskoenig} does not hold in general. Indeed if $\ini_<(I)$ is of K\"onig type, then always there exist $f_1,\ldots,f_h\in I$ of length $h=\height I$ such that $\ini_<(f_1),\ldots,\ini_<(f_h)$ is a regular sequence, but there may not exist such a sequence among the elements of a minimal generating set of $I$. See the following

\begin{Example}{\em
Consider the ideal $I=(x_1x_2-x_4^2,x_2x_3)$. Then with the reverse lex order $\ini_<(I)=(x_2x_3,x_1x_2,x_3x_4^2)$ which is an ideal of height $2$ and $x_1x_2,x_3x_4^2$ is the only regular sequence of length $2$ among its minimal generators. While $I$ is not of K\"onig type. Indeed any minimal generator of $I$ is of degree $2$, so its initial term is of degree $2$ as well. Thus there exists no sequence of length $2$ belonging to a minimal generating set of $I$ such that their initials form a regular sequence.}
\end{Example}

In \cite[Theorem 3.2]{HM} the first and third authors of this paper proved the following result.

\begin{Theorem}
\label{juergensomayeh}
Let $I\subset S$ be a monomial ideal. The following conditions are equivalent:
\begin{enumerate}
\item[{\em(a)}]  $I$ is of K\"onig type.
\item[{\em (b)}] $S/I$ admits  a special system of parameters.
\end{enumerate}
Moreover, if the equivalent conditions hold, then the sequence $C$ as given in (\ref{goodsequence}) is a special system of parameters.
\end{Theorem}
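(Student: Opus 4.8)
The plan is to reduce both implications to the combinatorial characterization of König type for monomial ideals furnished by Lemma~\ref{monomialkoenig}. Since $I$ is monomial, it is of König type if and only if $G(I)$ contains a regular sequence of $h=\height I$ monomials, and a sequence of monomials is regular precisely when their supports are pairwise disjoint. Thus throughout I identify ``$I$ is of König type'' with ``$G(I)$ contains $h$ monomials with pairwise disjoint supports,'' and I take $f_j=u_j$ so that the blocks of (\ref{goodsequence}) are exactly $B_j=\supp(u_j)$.

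For the implication (a) $\Rightarrow$ (b) together with the moreover part, I would start from such monomials $u_1,\ldots,u_h\in G(I)$ with $B_j=\supp(u_j)$ pairwise disjoint and form the attached sequence $C$ of (\ref{goodsequence}). Passing to $S/(C)$ sets every variable of $A$ equal to $0$ and identifies all variables of each block $B_j$ with its chosen representative $x_{i_j}$, so $S/(C)\iso K[x_{i_1},\ldots,x_{i_h}]$ is a polynomial ring in $h$ variables. Under this specialization the generator $u_j$, whose support is exactly $B_j$, maps to the positive power $x_{i_j}^{\deg u_j}$; hence the image of $I$ contains a power of each residual variable, and $S/(I+(C))$ is a quotient of the Artinian ring $K[x_{i_1},\ldots,x_{i_h}]/(x_{i_1}^{\deg u_1},\ldots,x_{i_h}^{\deg u_h})$, so it is Artinian. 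As $|C|=n-h=\dim S/I$, the sequence $C$ is a system of parameters, and it is special by construction, which gives both (b) and the final assertion.

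For the converse (b) $\Rightarrow$ (a) I would analyze an arbitrary special system of parameters $s_1,\ldots,s_d$. These forms are $K$-linearly independent, since otherwise $(s_1,\ldots,s_d)$ would be generated by fewer than $d$ linear forms, whence $\dim S/(I+(s_1,\ldots,s_d))\geq \dim S/I-(d-1)=1$, contradicting that they form a system of parameters. I then encode the difference forms as edges of a graph $\Gamma$ on $\{x_1,\ldots,x_n\}$ and split its connected components into \emph{free} components, carrying only difference forms (which by independence form a spanning tree), and \emph{zero} components, carrying in addition a single variable form $x_i$ (at most one, again by independence). Setting $L=(s_1,\ldots,s_d)$ and writing $\phi\colon S\to S/L$ for the canonical surjection, collapsing along $L$ identifies $S/L$ with a polynomial ring $K[z_l\:\; l\in F]$ having one residual variable per free component, and a dimension count gives $|F|=n-d=h$. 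The zero-dimensionality of $S/(I+L)$ forces, for each free component $P_l$, that the image of $I$ contain a power $z_l^{N}$; since $I$ is monomial this power is divisible by $\phi(w)$ for some $w\in G(I)$, and $\phi(w)$ being a pure power of $z_l$ forces $\supp(w)\subseteq P_l$. Choosing one such $w_l$ per $l\in F$ yields $h$ elements of $G(I)$ supported in the pairwise disjoint sets $P_l$, i.e.\ a regular sequence of monomials in $G(I)$; Lemma~\ref{monomialkoenig} then shows $I$ is of König type.

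The routine direction is (a) $\Rightarrow$ (b); the heart of the matter, and the main obstacle, is the structural analysis of a general special system of parameters in the converse. The delicate points are justifying that linear independence forces the difference forms into spanning forests and admits at most one variable form per component—so that $S/L$ is genuinely a polynomial ring of the expected dimension $h$—and then extracting from the Artinian hypothesis a \emph{single} monomial generator supported inside each free component. Once these are secured, the disjointness of the components immediately supplies the disjoint-support monomials required by Lemma~\ref{monomialkoenig}.
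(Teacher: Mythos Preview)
The paper does not prove this theorem; it is merely quoted from \cite[Theorem 3.2]{HM}, so there is no in-paper argument to compare your proposal against.

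Your argument is nonetheless correct. The direction (a)$\Rightarrow$(b) and the moreover clause are handled exactly as one expects: modding out the sequence $C$ collapses each block $B_j$ to the single variable $x_{i_j}$ and kills the variables in $A$, so $S/(C)$ is a polynomial ring in $h$ variables and each $u_j$ specializes to a positive power of $x_{i_j}$, giving Artinianness. For (b)$\Rightarrow$(a), your graph-component analysis of a general special system of parameters is the natural approach: linear independence forces the difference forms inside each component to form a spanning tree and admits at most one variable form per component; the dimension count $|F|=n-d=h$ identifies $S/L$ with a polynomial ring on one variable per free component; and the Artinian condition then produces, for each free component $P_l$, a generator $w_l\in G(I)$ with $\supp(w_l)\subseteq P_l$ (since $\phi(I)$ is a monomial ideal and any monomial dividing $z_l^N$ must be a pure power of $z_l$). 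The disjointness of the free components gives a regular sequence of length $h$ in $G(I)$, and Lemma~\ref{monomialkoenig} finishes. All the points you flag as delicate are argued correctly.
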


Combining Theorem~\ref{independent}, Lemma~\ref{remainskoenig} and  Theorem~\ref{juergensomayeh} we obtain

\begin{Corollary}
\label{initial}
Suppose $I$ is of K\"onig type with respect to $<$. Then the Cohen--Macaulayness of $\ini_<(I)$ does not depend on the base field.
\end{Corollary}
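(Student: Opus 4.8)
The plan is to reduce the statement to the monomial case already handled by the machinery assembled above. The crucial point is that $\ini_<(I)$ is itself a monomial ideal, and that in the terminology of this paper every monomial ideal is in particular a binomial ideal (the case in which all generators happen to be monomials). Consequently, once a special system of parameters for $S/\ini_<(I)$ is exhibited, Theorem~\ref{independent} applies directly to $\ini_<(I)$ and yields the conclusion.

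First I would invoke Lemma~\ref{remainskoenig}: since $I$ is of K\"onig type with respect to $<$, the initial ideal $\ini_<(I)$ is again of K\"onig type. As $\ini_<(I)$ is a monomial ideal, Theorem~\ref{juergensomayeh} becomes available, and its equivalence of conditions (a) and (b) shows that $S/\ini_<(I)$ admits a special system of parameters. In fact the ``moreover'' part of that theorem pins down such a system explicitly: the sequence $C$ of linear forms attached to the initial monomials as in (\ref{goodsequence}) is a special system of parameters for $S/\ini_<(I)$.

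Finally I would apply Theorem~\ref{independent} to the binomial (indeed monomial) ideal $\ini_<(I)$, which we have just seen admits a special system of parameters; the theorem then gives that the Cohen--Macaulay property of $S/\ini_<(I)$ is independent of the base field, which is precisely the assertion. All three ingredients are cited results, so the argument is a short chain of implications rather than a computation. The only step requiring a moment's attention is checking that the hypotheses of Theorem~\ref{independent} are genuinely met --- that a monomial ideal qualifies as a binomial ideal under the definition used here, and that the special system of parameters supplied by Theorem~\ref{juergensomayeh} is exactly the input that Theorem~\ref{independent} needs. I do not anticipate any real obstacle, since the substantive work (characteristic-independence for binomial ideals with a special system of parameters, and the characterization of monomial ideals of K\"onig type) has already been carried out in the statements we are allowed to assume.
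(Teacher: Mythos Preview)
Your proposal is correct and follows exactly the same route as the paper: the corollary is obtained by combining Theorem~\ref{independent}, Lemma~\ref{remainskoenig}, and Theorem~\ref{juergensomayeh} in precisely the way you describe. Your additional remark that a monomial ideal qualifies as a binomial ideal under the paper's convention is the right observation to make the chain of implications airtight.
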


The following lemma is useful, when we deal with ideals of K\"onig type. Recall that the unmixed part of an ideal in a Noetherian ring is the intersection of its primary components of minimal height.

\begin{Lemma}\label{colon}
Let $J\subset I\subset S$ be ideals with $\height J=\height I=h$ such that $J$ is a complete intersection and a radical ideal. Then
\begin{enumerate}
\item[{\em (a)}]  $J:I=\bigcap_{P\in\min(J)\setminus \min(I)} P$. In particular $J:I$ is a radical ideal and unmixed.
\item[{\em (b)}]  $J:(J:I)$ is the unmixed part of $\sqrt{I}$.
\end{enumerate}
\end{Lemma}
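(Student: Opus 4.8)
The plan is to exploit the structure of $J$ as a radical complete intersection, which forces it to be unmixed of height $h$ with $J=\bigcap_{P\in\min(J)}P$, an intersection of primes all of height $h$. First I would record the dictionary between the relevant sets of primes. Since $J\subset I$ gives $V(I)\subseteq V(J)$, every $P\in\min(I)$ with $\height P=h$ contains $J$, hence contains some $Q\in\min(J)$; as $\height Q=h=\height P$ and $Q\subseteq P$, we get $Q=P$, so $P\in\min(J)$. Conversely any $P\in\min(J)\cap\min(I)$ automatically has $\height P=h$. This identifies $\{P\in\min(I):\height P=h\}=\min(J)\cap\min(I)$, and consequently, for $P\in\min(J)$, the condition $I\subseteq P$ is equivalent to $P\in\min(I)$.

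For part (a), I would compute the colon ideal via two elementary facts valid for radical ideals: $(\bigcap_\alpha P_\alpha):I=\bigcap_\alpha(P_\alpha:I)$, and for a prime $P$ one has $P:I=S$ if $I\subseteq P$ and $P:I=P$ if $I\not\subseteq P$ (the latter by choosing $a\in I\setminus P$ and using primeness). Applying this to $J=\bigcap_{P\in\min(J)}P$ gives $J:I=\bigcap_{P\in\min(J),\,I\not\subseteq P}P$, and by the dictionary above the index set is exactly $\min(J)\setminus\min(I)$. Since this is an intersection of height-$h$ primes, $J:I$ is radical and unmixed, proving (a).

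For part (b), I would set $J':=J:I=\bigcap_{Q\in\min(J)\setminus\min(I)}Q$ and compute $J:J'$ with the same colon formula, obtaining $J:J'=\bigcap_{P\in\min(J),\,J'\not\subseteq P}P$. The crucial point is to determine, for $P\in\min(J)$, when $J'\subseteq P$. If $P\in\min(J)\setminus\min(I)$ this is immediate, since $P$ is one of the intersected primes. Conversely, if $J'\subseteq P$, then $\prod_{Q}Q\subseteq J'\subseteq P$ forces $Q\subseteq P$ for some $Q\in\min(J)\setminus\min(I)$ by primeness, and incomparability of the minimal primes of $J$ (all of height $h$) yields $P=Q$. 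Hence $\{P\in\min(J):J'\subseteq P\}=\min(J)\setminus\min(I)$, so $J:J'=\bigcap_{P\in\min(J)\cap\min(I)}P$. Since $\sqrt{I}=\bigcap_{P\in\min(I)}P$ has no embedded components, its unmixed part is $\bigcap_{P\in\min(I),\,\height P=h}P$, which equals $\bigcap_{P\in\min(J)\cap\min(I)}P$ by the dictionary, and this is exactly $J:J'$. This proves (b).

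The main obstacle, I expect, is purely the bookkeeping of minimal primes: establishing the equivalence $I\subseteq P\Leftrightarrow P\in\min(I)$ for $P\in\min(J)$, and the analogous characterization of $\{P:J'\subseteq P\}$ via prime avoidance combined with incomparability of equal-height minimal primes. I would also flag the degenerate case $\min(J)\setminus\min(I)=\emptyset$, in which $J'=S$ (the empty intersection) and both sides of (b) collapse to $J$ itself, so the stated formulas remain valid.
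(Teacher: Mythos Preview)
Your proof is correct and follows essentially the same route as the paper: decompose $J$ as the intersection of its height-$h$ minimal primes, discard those containing $I$ to obtain $J:I$, and then iterate for (b). The only cosmetic differences are that the paper handles one inclusion in (a) by localizing at $P\in\min(J)\setminus\min(I)$ (where $I_P=S_P$) rather than via your identity $(\bigcap_\alpha P_\alpha):I=\bigcap_\alpha(P_\alpha:I)$, and for (b) the paper simply reapplies (a) to the pair $J\subset J:I$ (using $\min(J:I)=\min(J)\setminus\min(I)$) instead of redoing the prime-avoidance computation.
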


\begin{proof}
(a) It is easy to see that $\bigcap_{P\in\min(J)\setminus \min(I)} P \subseteq J:I$. Now by contradiction suppose that there exists
$f\in (J:I)\setminus \bigcap_{P\in\min(J)\setminus \min(I)} P$. Then $f\notin P$ for some $P\in \min(J)\setminus \min(I)$. On the other hand $f/1\in (J:I)_P=J_P:I_P=PS_P:S_P=PS_P$. Thus $f\in P$, which is a contradiction.

(b) By (a), $$J:(J:I)=\bigcap_{P\in\min(J)\setminus (\min(J)\setminus \min(I))} P.$$
Since all minimal prime ideals in $\min(J)$ have height $h$, it follows that
$$\min(J)\setminus (\min(J)\setminus \min(I))=\min(J)\cap \min(I)=\{P\in \min(I): \height P=h\}.$$ This proves (b).

\end{proof}

\begin{Corollary}
Let $I$ be a graded ideal of K\"onig type with respect to $f_1,\ldots,f_h$ and assume that $\ini_<(f_i)$ is a squarefree monomial for all $i$.
Then the unmixed part of $\sqrt{I}$ is given by  $J:(J:I)$, where $J=(f_1,\ldots,f_h)$.
\end{Corollary}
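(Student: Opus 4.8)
The plan is to verify the hypotheses of Lemma~\ref{colon}(b) for the pair $J=(f_1,\ldots,f_h)\subseteq I$ and then quote it directly. Since $f_1,\ldots,f_h$ form part of a minimal generating set of $I$, we have $J\subseteq I$, and by the definition of K\"onig type $\height I=h$. Thus the only two things to establish are that $J$ is a complete intersection of height $h$ and that $J$ is a radical ideal; once these are in place, Lemma~\ref{colon}(b) gives the conclusion immediately.

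For the complete intersection property I would first argue $\height J=h$. By definition $\ini_<(f_1),\ldots,\ini_<(f_h)$ is a regular sequence, so the ideal $(\ini_<(f_1),\ldots,\ini_<(f_h))$ has height $h$; since it is contained in $\ini_<(J)$ and $\height J=\height\ini_<(J)$ by \cite[Theorem 3.3.4]{HH}, we get $\height J\geq h$, while Krull's height theorem gives $\height J\le h$ because $J$ is generated by $h$ elements. Hence $\height J=h$. As $S$ is Cohen--Macaulay, a sequence of $h$ elements generating an ideal of height $h$ is automatically a regular sequence, so $f_1,\ldots,f_h$ is a regular sequence and $J$ is a complete intersection.

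The radicality of $J$ is where the squarefree hypothesis enters, and it is the main point. First I would show $\ini_<(J)=(\ini_<(f_1),\ldots,\ini_<(f_h))$ by a Hilbert series comparison: writing $d_i=\deg f_i$, both $S/J$ (because $f_1,\ldots,f_h$ is a regular sequence) and $S/(\ini_<(f_1),\ldots,\ini_<(f_h))$ (because $\ini_<(f_1),\ldots,\ini_<(f_h)$ is a regular sequence) have Hilbert series $\prod_{i=1}^h(1-t^{d_i})/(1-t)^n$; since $S/\ini_<(J)$ shares the Hilbert series of $S/J$ by \cite[Corollary 6.1.5]{HH} and $(\ini_<(f_1),\ldots,\ini_<(f_h))\subseteq\ini_<(J)$, the two monomial ideals have equal Hilbert functions and therefore coincide. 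Consequently $\ini_<(J)$ is generated by the squarefree monomials $\ini_<(f_i)$, hence is a radical (squarefree) monomial ideal. Finally I would invoke the standard fact that an ideal whose initial ideal is radical is itself radical --- which follows from the flat Gr\"obner degeneration of $S/J$ to $S/\ini_<(J)$ together with the openness of the reduced locus in a flat family --- to conclude that $J$ is radical.

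With $J\subseteq I$, $\height J=\height I=h$, $J$ a complete intersection, and $J$ radical all established, Lemma~\ref{colon}(b) applies verbatim and yields that $J:(J:I)$ is the unmixed part of $\sqrt{I}$. I expect the only delicate step to be the passage ``$\ini_<(J)$ radical $\implies$ $J$ radical''; everything else is a direct height and Hilbert-function bookkeeping, and the identity $\ini_<(J)=(\ini_<(f_1),\ldots,\ini_<(f_h))$ obtained along the way also confirms that $\{f_1,\ldots,f_h\}$ is a Gr\"obner basis of $J$.
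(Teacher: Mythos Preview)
Your proposal is correct and follows essentially the same route as the paper: verify that $J$ is a complete intersection of height $h$ and that $J$ is radical (via squarefreeness of $\ini_<(J)$), then invoke Lemma~\ref{colon}(b). The only cosmetic difference is that the paper asserts directly that a sequence with a regular sequence of initial monomials is a Gr\"obner basis (Buchberger's criterion with pairwise coprime leading terms), whereas you reach the same conclusion $\ini_<(J)=(\ini_<(f_1),\ldots,\ini_<(f_h))$ by a Hilbert-series comparison; both arguments are valid and yield the same intermediate fact.
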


\begin{proof}
Since $\ini_<(f_1),\ldots,\ini_<(f_h)$ is a regular sequence, it follows that $f_1,\ldots,f_h$ forms a Gr\"obner basis of $J$. Since $\ini_<(f_i)$ is a squarefree monomial for all $i$, it follows that $\ini_<(J)$ is a squarefree monomial ideal. This implies that $J$ is a radical ideal, see for example the proof of ~\cite[Corollary 2.2]{HHHKR}. Now by Lemma~\ref{colon} the result holds.
\end{proof}



\section{Properties of monomial  ideals of K\"onig type}
\label{two}
In this section we give a combinatorial characterization for Cohen-Macaulay monomial ideals of K\"onig type.
When $I$ is the edge ideal of a graph $G$, a description of Cohen-Macaulayness and the canonical module of $S/I(G)$, when $G$ is Cohen-Macaulay are given in terms of data from $G$.
We consider the following setting.

Let $I$ be a monomial ideal of  K\"onig type with respect to $u_1,\ldots,u_h\in G(I)$. Then $S/I$ has a special system of parameters $f_1,\ldots,f_d$ in the form of (\ref{goodsequence}) attached  to $u_1,\ldots,u_h$ (see the proof of \cite[Theorem 3.2]{HM}). For any $f_k=x_i-x_j$ we may assume that $i<j$.
For any $B\subseteq \{f_1,\ldots,f_d\}$, we set $I_B$ be the ideal obtained from $I$ by replacing $x_j$ by $x_i$ for any $x_i-x_j\in B$.
We call a monomial $g$ a modification of $f$ by $B$, if we  get $g$ when we replace $x_j$ by $x_i$ in $f$ for any $x_i-x_j\in B$.

The following theorem gives a combinatorial characterization for Cohen-Macaulay monomial ideals of K\"onig type and is a generalization of \cite[Proposition 3.2]{CRT}.

\begin{Theorem}\label{IB}
Let $I$ be a monomial ideal of  K\"onig type with a special system of parameters $f_1,\ldots,f_d$ attached  to $u_1,\ldots,u_h\in G(I)$. The following conditions are equivalent:
\begin{enumerate}
\item[{\em (i)}]  $S/I$ is Cohen-Macaulay.
\item[{\em (ii)}]  $S/I_B$ is Cohen-Macaulay for any $B\subseteq \{f_1,\ldots,f_d\}$.
\item[{\em (iii)}] $I_B$ is unmixed for any $B\subseteq \{f_1,\ldots,f_d\}$.
\end{enumerate}
\end{Theorem}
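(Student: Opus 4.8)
The plan is to prove the cycle of implications (i)$\Rightarrow$(ii)$\Rightarrow$(iii)$\Rightarrow$(i), working throughout with the attached sequence (\ref{goodsequence}). Since $I_\emptyset=I$ and since $I_B$ depends only on the difference-type forms occurring in $B$ (a form $x_i\in A$ triggers no substitution), I may assume in each $I_B$ that $B$ consists of differences. The implication (ii)$\Rightarrow$(iii) is then immediate: a Cohen--Macaulay ring is unmixed, so $S/I_B$ Cohen--Macaulay forces $I_B$ unmixed for every $B$.

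For (i)$\Rightarrow$(ii) I would use that $f_1,\ldots,f_d$ is a system of parameters of the Cohen--Macaulay ring $R=S/I$, hence a regular sequence. Writing $L_B$ for the ideal generated by $B$, the forms in $B$ form part of this system of parameters, so $\dim R/L_BR=d-|B|$, and in a Cohen--Macaulay graded ring this dimension count means $B$ is again a regular sequence; thus $R/L_BR$ is Cohen--Macaulay. Finally, factoring out the differences in $B$ realizes the substitutions $x_j\mapsto x_i$, so $R/L_BR\cong S_B/I_B$, where $S_B$ is the polynomial ring on the surviving variables. As reinstating the dropped variables is a polynomial extension, $S/I_B$ is Cohen--Macaulay.

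The substance is in (iii)$\Rightarrow$(i), which I would prove by induction on the number $r=\sum_{j}(|B_j|-1)$ of difference forms. Taking $B=\emptyset$ in (iii) shows that $I$ is unmixed. In the base case $r=0$ every $u_j$ is a power $x_{i_j}^{a_j}$ of a single variable, so each minimal prime of $I$ contains $(x_{i_1},\ldots,x_{i_h})$; since $I$ is unmixed, $\Min(I)=\Ass(S/I)=\{(x_{i_1},\ldots,x_{i_h})\}$ and $I$ is primary to this prime. A short argument then shows that a monomial ideal primary to $(x_{i_1},\ldots,x_{i_h})$ can have no minimal generator divisible by a variable of $A$ (such a variable would be a zerodivisor lying outside the radical). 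Hence $S/I$ is a polynomial ring over an Artinian ring, and so is Cohen--Macaulay.

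For the inductive step I pick a difference $f=x_k-x_{i_j}$. The key point, and the main obstacle, is to show that $f$ is a nonzerodivisor on $R=S/I$. As $R$ is monomial and unmixed, $\Ass(S/I)=\Min(I)$ consists of monomial primes of height $h$, and $f$ fails to be a nonzerodivisor exactly when some such prime $P$ contains both $x_k$ and $x_{i_j}$. I rule this out by a height count: because $f$ belongs to the system of parameters, $\dim S/(I+(f))=d-1$, and since $S/(I+(f))\cong S_{\{f\}}/I_{\{f\}}$ with $S_{\{f\}}$ having $n-1$ variables, $\height I_{\{f\}}=h$. If such a $P=(x_a:a\in\sigma)$ existed, its image under $x_k\mapsto x_{i_j}$ would be a monomial prime of height $h-1$ containing $I_{\{f\}}$, contradicting $\height I_{\{f\}}=h$. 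Thus $f$ is a nonzerodivisor and $R/fR\cong S_{\{f\}}/I_{\{f\}}$. By Lemma~\ref{monomialkoenig} and Theorem~\ref{juergensomayeh}, $I_{\{f\}}$ is again a monomial ideal of K\"onig type (the modified supports $\bar u_1,\ldots,\bar u_h$ remain pairwise disjoint), with attached sequence obtained by deleting $f$, hence with $r-1$ differences; and since $(I_{\{f\}})_{B'}=I_{\{f\}\cup B'}$ it inherits hypothesis (iii). By induction $R/fR$ is Cohen--Macaulay, and as $f$ is a nonzerodivisor this yields that $R$ is Cohen--Macaulay, closing the cycle.
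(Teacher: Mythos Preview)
Your argument is correct, though packaged differently from the paper. For (iii)$\Rightarrow$(i) the paper argues by a single direct contradiction: if $S/I$ is not Cohen--Macaulay then the system of parameters fails to be regular at some first index $k$, so $f_k=x_i-x_j$ lies in a minimal prime $P$ of $I_B$ with $B=\{f_1,\ldots,f_{k-1}\}$; since $P$ is monomial one gets $x_i,x_j\in P$, and replacing the unique modified generator $u'_\ell$ with $x_ix_j\mid u_\ell$ by the pair $x_i,x_j$ produces a regular sequence $u'_1,\ldots,\widehat{u'_\ell},\ldots,u'_h,x_i,x_j$ of length $h+1$ inside $P$, contradicting unmixedness of $I_B$. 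Your inductive route proves the same non\-zerodivisor fact one difference at a time, with the dual height argument (a height-$h$ monomial prime containing both $x_k$ and $x_{i_j}$ maps to a height-$(h-1)$ prime containing $I_{\{f\}}$, contradicting $\height I_{\{f\}}=h$); the content is equivalent.

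One small wrinkle: the images $\bar u_1,\ldots,\bar u_h$ need not all lie in $G(I_{\{f\}})$, so the ``attached sequence obtained by deleting $f$'' is not literally the sequence attached to a regular sequence of minimal generators of $I_{\{f\}}$. This is harmless --- the computation showing that the attached sequence is a system of parameters only uses that the $u_j$ are pairwise coprime monomials in $I$, so the inductive claim should be stated in that generality rather than with $u_j\in G(I)$. The paper's one-shot contradiction avoids this bookkeeping (and the separate base case) altogether, which is what it buys; your approach, on the other hand, makes the inductive structure behind the equivalence completely explicit.
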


\begin{proof}
Without loss of generality we may assume that any variable $x_i$ belongs to the support of some $u_j$. Thus
each $f_k$ is of the form $x_i-x_j$ with $i<j$.

(i)\implies (ii): Let $B=\{g_1,\ldots,g_r\}\subseteq \{f_1,\ldots,f_d\}$. Since $R=S/I$ is a Cohen-Macaulay ring, $g_1,\ldots,g_r$ is a regular sequence, which implies that $R/(g_1,\ldots,g_r)R$ is Cohen-Macaulay. Notice that $R/(g_1,\ldots,g_r)R\cong T/I_B$, where $T=S/(x_j:\ j\in A)$ and $A=\{1\leq j\leq n:\ x_i-x_j\in B \textrm{ for some } i\}$. Thus $T/I_B$ is Cohen-Macaulay, and hence $S/I_B$ is Cohen-Macaulay.

(ii)\implies (iii): Any Cohen-Macaulay ideal is unmixed.

(iii)\implies (i): By contradiction assume that $R=S/I$ is not Cohen-Macaulay. Then $f_1,\ldots,f_d$ is not a regular sequence. So there exists
$k$ with $f_k\in Z(R/(f_1,\ldots,f_{k-1})R)$. Since  $R/(f_1,\ldots,f_{k-1})R\cong T/I_B$ for $B=\{f_1,\ldots,f_{k-1}\}$ and some polynomial ring $T$, we have
$f_k\in P$ for some minimal prime ideal $P$ of $I_B$. By assumption $u_1,\ldots,u_h$ is a regular sequence in $G(I)$ such that for each $i$, $\Supp(f_i)\subseteq \Supp(u_j)$  for some $1\leq j\leq h$, where $h=\height I$. If $f_k=x_i-x_j$, then $x_i,x_j\in P$, since $P$ is a monomial ideal. Let $u'_t$ be the modification of $u_t$ by $B$ for any $1\leq t\leq h$.
Then $(u'_1,\ldots,u'_h)+(x_i,x_j)\subseteq P$. Let $1\leq \ell\leq h$ be the unique integer with $x_ix_j|u_{\ell}$. Then
$u'_1,\ldots,u'_{\ell-1},u'_{\ell+1},\ldots,u'_h,x_i,x_j$ is a regular sequence in $P$. So
we obtain $\height P>h$. Notice that $\height I_B=\height I=h$. Since $I_B$ is unmixed, $h=\height I_B=\height P>h$, which is a contradiction.
\end{proof}

For a graph $G$ we show the set of all minimal vertex covers of $G$ by $\min(G)$ and $|\min(G)|$ is denoted by $c_G$.
For a minimal vertex cover $C\subseteq V(G)=\{x_1,\ldots,x_n\}$ of $G$, we set $L_C=(x_i:\ x_i\in C)$.
A perfect matching of K\"onig type for a graph $G$, is a  matching $\{e_1,\ldots,e_h\}$ with $h=\tau(G)$ and $V(G)=\cup_{i=1}^h e_i$.
It is known that any unmixed K\"onig graph without isolated vertices has a perfect matching of K\"onig type, see for example \cite[Proposition 15]{CCR}.
\begin{Theorem}
\label{canonical2}
Let $G$ be an Cohen-Macaulay K\"onig graph and $\{e_1,\ldots,e_n\}$ be a perfect matching of K\"onig type for $G$. Let $R=S/I(G)$ and $H$ be a graph with $V(H)=V(G)$ and
$$E(H)=\{\{z,w\}:\ z\in e_i,\ w\in e_j,\ i\neq j,\ (e_i\setminus\{z\})\cup (e_j\setminus\{w\})\in E(G)\}.$$ Then
\begin{enumerate}
\item[{\em (a)}]  $\omega_R\iso \overline{I(H)^{\vee}}$, where $\overline{I}$ denotes the image of the ideal $I$ in $R$ under the canonical epimorphism $S\to R$.
\item[{\em (b)}] $\type(R)=c_H$.
\end{enumerate}
\end{Theorem}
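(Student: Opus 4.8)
The plan is to pass to the Artinian reduction of $R=S/I(G)$ given by the matching, read off its socle, and transport the answer back to $R$ through the graph $H$. Write $R=S/I(G)=K[\Delta]$ with $\Delta$ the independence complex of $G$; since $G$ is König with $|V(G)|=2n$ and $\tau(G)=n$, we have $\dim R=n$. With $e_i=\{a_i,b_i\}$, the monomials $x_{a_i}x_{b_i}\in I(G)$ form the regular sequence witnessing König type, and the attached sequence (\ref{goodsequence}) is $f_i=x_{a_i}-x_{b_i}$ for $i=1,\dots,n$ (here $A=\emptyset$ since the matching is perfect). As $R$ is Cohen--Macaulay, $\underline f=f_1,\dots,f_n$ is a regular sequence, so $\bar R:=R/\underline f R$ is Artinian, $\type R=\type\bar R=\dim_K\Soc(\bar R)$, and $\omega_R\otimes_R\bar R\cong\omega_{\bar R}=\Hom_K(\bar R,K)$.

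Identifying $x_{a_i}=x_{b_i}=:y_i$ shows $\bar R\cong K[y_1,\dots,y_n]/\big((y_1^2,\dots,y_n^2)+I(G')\big)$, where $G'$ is the simple graph on $[n]$ with $\{i,j\}\in E(G')$ exactly when $G$ has an edge joining $e_i$ to $e_j$. A $K$-basis of $\bar R$ is $\{\,y_F=\prod_{i\in F}y_i : F\text{ independent in }G'\,\}$, and $y_F\in\Soc(\bar R)$ iff $F$ is a maximal independent set; hence $\dim_K\Soc(\bar R)$ is the number $c_{G'}$ of minimal vertex covers of $G'$. To match $G'$ with $H$, note that the involution sending each vertex to the other endpoint of its matching edge is an isomorphism from $G$ with the matching edges deleted onto $H$: for $z\in e_i$, $w\in e_j$, $i\ne j$, the pair $\{z,w\}$ is an edge of $G$ precisely when its image is an edge of $H$. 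Contracting each block $e_i$ exhibits $G'$ as the block-contraction of $H$, and I would show that the minimal vertex covers of $H$ biject with those of $G'$: a cover $W$ of $G'$ lifts, using the structure of a Cohen--Macaulay very-well-covered graph, to a unique choice within each block of $W$ of the endpoints needed to cover the edges leaving $W$, and block-projection is the inverse. This yields $\type R=c_{G'}=c_H$.

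For (a) I would realise $\omega_R$ as $\overline{I(H)^\vee}$, the image in $R$ of the cover ideal $I(H)^\vee$, minimally generated by the monomials $x_C$ over the minimal vertex covers $C$ of $H$. After verifying that $\overline{I(H)^\vee}$ is a maximal Cohen--Macaulay $R$-module, I would construct a multigraded $R$-linear map $\overline{I(H)^\vee}\to\omega_R$ sending each $\overline{x_C}$ to the socle-dual generator of $\omega_R$ indexed, through the bijection above, by the facet of the independence complex of $G'$ complementary to $C$. Modulo $\underline f$ this becomes the canonical identification with $\Hom_K(\bar R,K)=\omega_{\bar R}$; being an isomorphism modulo a regular sequence on two MCM modules, it is itself an isomorphism. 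Part (b) then also follows as $\type R=\mu(\omega_R)=\mu(\overline{I(H)^\vee})=c_H$.

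The hard part is this last step: checking that the generators $\overline{x_C}$ satisfy exactly the relations of $\omega_R$ (equivalently that the constructed map is well defined and bijective after reduction), and, inside it, the forced-lift claim that a cover of $G'$ extends to a \emph{unique} minimal cover of $H$ with no block forced to contribute both endpoints. Both rest on the triangular shape of the cross edges in a Cohen--Macaulay very-well-covered graph, which is exactly what makes the socle basis of $\bar R$ correspond one-to-one with the monomials $x_C$.
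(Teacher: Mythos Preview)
Your route through the Artinian reduction is genuinely different from the paper's, and for part (b) the computation $\type R=\dim_K\Soc(\bar R)=c_{G'}$ is exactly what the paper records separately as Theorem~\ref{characterizekoenig}(b). But the passage from $c_{G'}$ to $c_H$ is where your argument thins out: you assert a bijection between minimal vertex covers of $H$ and of $G'$, yet a minimal cover of $H$ could in principle contain both endpoints of some $e_i$, and even once that is ruled out the projection to $[n]$ need not be injective nor hit only minimal covers. The paper handles precisely this point---but only after it already knows $\omega_R$---by invoking the Morey--Reyes--Villarreal structure of unmixed K\"onig graphs to show no minimal cover of $H$ contains a full matching edge; it never needs a bijection with covers of $G'$.

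For part (a) your plan has a more serious gap. The map $\overline{I(H)^\vee}\to\omega_R$ you propose is specified only on generators, by sending $\overline{x_C}$ to ``the socle-dual generator of $\omega_R$'' indexed by $C$; but you do not yet know what the generators of $\omega_R$ are, nor that they are in bijection with the $C$'s, nor that the relations among the $\overline{x_C}$ are respected---these are exactly the content of the theorem. Verifying that $\overline{I(H)^\vee}$ is maximal Cohen--Macaulay is likewise not addressed and is not automatic. The paper avoids all of this by using linkage: with $J=(x_{a_1}x_{b_1},\ldots,x_{a_n}x_{b_n})$ the complete intersection on the matching, one has $\omega_R\cong (J:I(G))/J$ by Peskine--Szpiro, and $J:I(G)$ is computed directly from the primary decompositions of $J=I(T)$ and $I(G)$ via Lemma~\ref{colon}, yielding $I(H)^\vee$ modulo $J$ after an explicit bookkeeping of $\min(T)\setminus\min(G)$. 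Part (b) then follows by checking that none of the generators $x_C$ of $I(H)^\vee$ lies in $J$, which is where the Morey--Reyes--Villarreal lemma enters. The linkage formula is the missing idea that turns your outline into a proof; without it, constructing and verifying the isomorphism by hand is essentially as hard as the theorem itself.
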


\begin{proof}
(a) We use a basic fact from linkage theory (first observed by \cite{PS}): let $I\subset  S$ be a graded Cohen--Macaulay ideal,  $J\subset I$ a complete intersection with $\height I=\height J$, and let $R=S/I$. Then
\[
\omega_R\iso (J:I)/J.
\]
For any $1\leq i\leq n$, let $e_i=\{x_i,y_i\}$. Set $J=(x_1y_1,\ldots,x_ny_n)$ and let $T$ be the graph with the vertex set $V(G)$ and the edge set $E(T)=\{e_1,\ldots,e_n\}$. Then $J=I(T)$. Note that any minimal vertex cover $C$ of $G$ has cardinality $\tau(G)=n$ and for any $1\leq i\leq n$, $|e_i\cap C|=1$.
So $\min(G)\subseteq \min(T)$. By Lemma~\ref{colon},
\begin{eqnarray}
\label{minsect}
J:I(G)=\Sect_{C\in\min(T)}L_C:\Sect_{C\in\min(G)}L_C=  \Sect_{C\in\min(T)\setminus \min(G)}L_C.
\end{eqnarray}


We have $C\in \min(T)\setminus \min(G)$, if and only if $C\in \min(T)$ and there exists an edge $e\in E(G)\setminus E(T)$ such that $C\cap e=\emptyset$. Let $C\in \min(T)\setminus \min(G)$ and $e=\{z,w\}\in E(G)\setminus E(T)$
with $C\cap e=\emptyset$. Since $V(G)=\cup_{i=1}^n e_i$, we have $z\in e_i$ and $w\in e_j$ for some $i$ and $j$. Clearly $i\neq j$ and $(e_i\setminus\{z\})\cup (e_j\setminus\{w\})\subseteq C$. In the sequel we denote the set $(e_i\setminus\{z\})\cup (e_j\setminus\{w\})$ by $\{\tilde{z},\tilde{w}\}$. Then we conclude that $\min(T)\setminus \min(G)=\bigcup A_{z,w}$, where the union is taken over
$\{z,w\}\in E(G)\setminus E(T)$ and $$A_{z,w}=\{D\cup \{\tilde{z},\tilde{w}\}: D\in \min(T\setminus \{e_i,e_j\}), z\in e_i, w\in e_j\}.$$
So $$\Sect_{C\in\min(T)\setminus \min(G)}L_C=\Sect_{\{z,w\}\in E(G)\setminus E(T)}(\Sect_{C\in A_{z,w}}L_C).$$

Note that for $z\in e_i$ and $w\in e_j$, we have $$\Sect_{C\in A_{z,w}}L_C=(\tilde{z},\tilde{w})+I(T\setminus \{e_i,e_j\}).$$
Therefore
\begin{eqnarray}
\label{innersec}
\Sect_{C\in\min(T)\setminus \min(G)}L_C=(\Sect_{\{z,w\}\in E(G)\setminus E(T)}(\tilde{z},\tilde{w}))+I'=I(H)^{\vee}+I',
\end{eqnarray}
for some ideal $I'\subseteq J\subseteq I(G)$.
By second isomorphism Theorem,
\begin{eqnarray}
\label{second iso}
(\Sect_{C\in\min(T)\setminus \min(G)}L_C)/J\cong (\Sect_{C\in\min(T)\setminus \min(G)}L_C+I(G))/I(G).
\end{eqnarray}
Now by (\ref{minsect}), (\ref{innersec}) and (\ref{second iso}) we get
$$\omega_R\iso (\Sect_{C\in\min(T)\setminus \min(G)}L_C+I(G))/I(G)\cong \overline{I(H)^{\vee}}.$$

(b) By (a) we get $\omega_R\iso (\Sect_{C\in\min(T)\setminus \min(G)}L_C)/J=(I(H)^{\vee}+I')/J$, where $I'\subseteq J$. So $\omega_R\iso (I(H)^{\vee}+J)/J$ and then $\type(R)$ is equal to the number of minimal generators of $(I(H)^{\vee}+J)/J$. Thus it is enough to show that this number is equal to the number of minimal generators $I(H)^{\vee}$.
Let $\{u_1,\ldots,u_m\}$ be the minimal generating set of monomials of $I(H)^{\vee}$, where $u_i=\xb_{C_i}$ for each $i$ and $\{C_1,\ldots,C_m\}=\min(H)$. Here by
$\xb_C$ we mean $\prod_{x_i\in C} x_i$. Let $\overline{u}_i=u_i+J$ for all $i$. We show that
$\{\overline{u}_1,\ldots,\overline{u}_m\}$ is a minimal generating set of $(I(H)^{\vee}+J)/J$. Clearly it is a generating set. By contradiction assume that it is not minimal. Then
$u_j-\sum_{i\neq j} r_iu_i\in J$ for some $1\leq j\leq m$ and $r_i\in S$. Since $J$ is a monomial ideal and none of the $u_i$'s with $i\neq j$ divides $u_j$, we have
$u_j\in J$. This means that $x_ky_k|u_j$ for some $1\leq k\leq n$, or equivalently $x_k,y_k\in C_j$. Thus $N_H(x_k)\nsubseteq C_j$, otherwise $C_j\setminus \{x_k\}$ is a vertex cover of $H$ as well, which contradicts to the minimality of $C_j$. Similarly $N_H(y_k)\nsubseteq C_j$. So there exist $z\in N_H(x_k)\setminus C_j$ and $w\in N_H(y_k)\setminus C_j$. Then $\{x_k,z\},\{y_k,w\}\in E(H)$. Let $z\in e_r=\{z,z'\}$ and $w\in e_s=\{w,w'\}$. Then by the definition of $H$ we have $\{y_k,z'\},\{x_k,w'\}\in E(G)$. Since $G$ is an unmixed K\"onig graph, by ~\cite[Corollary 2.11]{MRV}, we should have $\{z',w'\}\in E(G)$. This means that $\{z,w\}\in E(H)$. But since $z,w\notin C_j$, this contradicts to $C_j$ be a vertex cover of $H$.
\end{proof}

We use Theorem~\ref{juergensomayeh} to derive a combinational Cohen--Macaulay criterion for K\"onig graphs. The reader may compare our criterion
 with \cite[Proposition 28]{CCR} and \cite[Theorem 2.7]{HM}.

Let $G$ be K\"onig graph on $[n]$, and let $\{e_1,\ldots,e_m\}$ be a matching of $G$  with $m=\tau(G)$.  Let $e_k=\{i_k,j_k\}$ with $i_k<j_k$ for $k=1,\ldots,m$. We define a new graph $G_0$ with $V(G_0)=\{i_1,\ldots,i_m\}$  for which $e=\{i_k,i_l\}\in E(G_0)$ if and only if $e_k$ and $e_l$ are adjacent in $G$, i.e., if and only if there exists an edge $e'\in G$ such that $e_k\sect e'\neq\emptyset$ and $e_l\sect e'\neq \emptyset$.

Part (b) of the following result is due to \cite[Corollary 4.4]{CRT}. For the convenience of the reader we include its short proof.

\begin{Theorem}
\label{characterizekoenig}
Let $G$ be a K\"onig graph. Let $\alpha(G)$ be the number of vertex covers  $D$ of $G$ with $|D|=\tau(G)$, and let $\Delta_{G_0}$ be the independence  complex of $G_0$.  With the assumptions and notation introduced we have:
\begin{enumerate}
\item[{\em (a)}] $S/I(G)$ is Cohen--Macaulay if and only if $\alpha(G)=|\Delta_{G_0}|$.
\item[{\em (b)}] If $S/I(G)$ is Cohen--Macaulay, then $\type(S/I(G))$ is the number of facets of $\Delta_{G_0}$ and $\reg I(G)= \dim \Delta_{G_0}+1$.
\end{enumerate}
\end{Theorem}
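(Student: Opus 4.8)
The plan is to exploit the special system of parameters produced by Theorem~\ref{juergensomayeh} together with the multiplicity criterion used in the proof of Theorem~\ref{independent}, reducing both parts to one completely explicit Artinian algebra built out of $G_0$. First I would observe that $I(G)$ is of K\"onig type: writing $u_k = x_{i_k}x_{j_k}$ for the monomial of the edge $e_k$, the $u_1,\ldots,u_m$ involve pairwise disjoint variables (the $e_k$ form a matching) and hence form a regular sequence of monomials in $G(I(G))$, with $m=\tau(G)=\height I(G)$. By Theorem~\ref{juergensomayeh}, $R=S/I(G)$ admits the special system of parameters $C$ of (\ref{goodsequence}) attached to $u_1,\ldots,u_m$; here $C=A\union\{x_{j_1}-x_{i_1},\ldots,x_{j_m}-x_{i_m}\}$, where $A$ collects the variables of the vertices not covered by the matching. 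Put $L=(C)$. Exactly as in the proof of Theorem~\ref{independent}, $R/LR\cong T/J$, where $T=S/L=K[x_{i_1},\ldots,x_{i_m}]$ and $J$ is obtained from $I(G)$ by setting $x_a=0$ for $a\in A$ and identifying $x_{j_k}$ with $x_{i_k}$.

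The central computation is the identification
\[
J=(x_{i_1}^2,\ldots,x_{i_m}^2)+I(G_0).
\]
The matching edge $e_k$ contributes $x_{i_k}^2$; a non-matching edge incident to a vertex in $A$ collapses to $0$; and a non-matching edge $\{z,w\}$ with $z\in e_r$, $w\in e_s$ (forcing $r\neq s$) maps to $x_{i_r}x_{i_s}$, the pairs $\{i_r,i_s\}$ so arising being, by the very definition of $G_0$, exactly its edges. Hence $T/J$ has the squarefree monomials $x_F=\prod_{i\in F}x_i$ indexed by the independent sets $F$ of $G_0$ as a $K$-basis, so it is Artinian with $\dim_K T/J=|\Delta_{G_0}|$. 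As $R/LR$ is $0$-dimensional, $e(R/LR)=\dim_K(R/LR)=|\Delta_{G_0}|$. On the other hand $R$ is the Stanley--Reisner ring of the independence complex of $G$, whose multiplicity is the number of its facets of maximal dimension, i.e.\ the number of maximum independent sets of $G$; by complementation this is $\alpha(G)$, so $e(R)=\alpha(G)$. Part (a) then follows at once from the criterion that $R$ is Cohen--Macaulay if and only if $e(R)=e(R/LR)$, the same one invoked for Theorem~\ref{independent}.

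For part (b) I would use that, once $R$ is Cohen--Macaulay, the special system of parameters $C$ is a regular sequence of linear forms, so both the Cohen--Macaulay type and the regularity of $R$ are inherited by the Artinian reduction $T/J$. The type equals $\dim_K\Soc(T/J)$, and since $J$ is monomial the socle is spanned by residues of monomials: $x_F$ lies in the socle precisely when $x_i x_F=0$ for every $i$, which by the squaring relations and $I(G_0)$ means $F\union\{i\}$ is not independent for each $i\notin F$, i.e.\ $F$ is a maximal independent set, hence a facet of $\Delta_{G_0}$. Thus $\type R$ is the number of facets of $\Delta_{G_0}$. Finally, as $T/J$ is Artinian, $\reg R=\reg(T/J)$ equals its top nonzero degree $\max\{|F|\:\ F\in\Delta_{G_0}\}=\dim\Delta_{G_0}+1$, which is the claimed formula for $\reg I(G)$.

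The main obstacle is the precise identification $J=(x_{i_1}^2,\ldots,x_{i_m}^2)+I(G_0)$: one must verify that, under the two substitutions, the surviving non-matching edges of $G$ correspond, as unordered pairs of surviving variables, exactly to the edges of $G_0$, carefully accounting both for the edges that collapse to $0$ and for the fact that several edges of $G$ may induce the same edge of $G_0$. Once this dictionary is established, the multiplicity, socle and top-degree computations are routine.
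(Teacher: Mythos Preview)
Your proposal is correct and follows essentially the same approach as the paper's own proof: both reduce modulo the special system of parameters from Theorem~\ref{juergensomayeh}, identify the Artinian quotient with $K[\Delta_{G_0}]/(x_{i_1}^2,\ldots,x_{i_m}^2)$, compute its length as $|\Delta_{G_0}|$ and $e(R)$ as $\alpha(G)$, and finish with the multiplicity criterion; part~(b) is handled in both by reading off the socle and top degree of this Artinian ring. Your write-up in fact supplies more detail than the paper on the key identification $J=(x_{i_1}^2,\ldots,x_{i_m}^2)+I(G_0)$, which the paper states without justification.
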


\begin{proof}
(a) We notice that $\alpha(G)$ coincides with the number of minimal prime ideals $P$ of $I(G)$ with $\height P=\height I(G)$. This shows that $\alpha(G)=e(R)$,  where $R=S/I(G)$. By Theorem~\ref{juergensomayeh}, the sequence $C$ in (\ref{goodsequence}) is a system of parameters for $R$. Let $L$ be the ideal generated by this sequence. Then $R/LR\iso K[\Delta_{G_0}]/(x_1^2,\ldots,x_n^2)$. Thus the set of monomials $\{\prod_{i\in F}x_i\:\; F\in \Delta_{G_0}\}$  is a $K$-basis of $R/LR$. It follows that $\length(R/LR)=|\Delta_{G_0}|$. Here $\length(M)$ denotes the length of a module $M$. As observed in Theorem~\ref{independent}, $R$ is Cohen--Macaulay if and only if $e(R)=e(R/LR)$. Since $\dim R/LR=0$ we have $e(R)=\length(R/LR)$. Thus, the desired conclusion follows.

(b) If $R$ is Cohen--Macaulay, then $\type(R)=\type(R/LR)$. Since $\dim R/LR=0$ it follows that $\type(R/LR)=\dim_K \Soc(R/LR)$. Here $\Soc(R/LR)$ denotes the socle of $R/IR$. It follows from the structure of $R/LR$, as described in part (a) of the proof, that the monomials $\prod_{i\in F}x_i$ with $F$ a facet of $\Delta_{G_0}$ form a  $K$-basis of $\Soc(R/LR)$. This shows that $\type(I(G))$ is the number of facets of $\Delta_{G_0}$. Keeping in mind that $\reg(R)$ coincides with the maximal degree of a socle element of $R/LR$, we see that $\reg I(G)= \dim \Delta_{G_0}+1$.
\end{proof}

We end this section by proving a property for an unmixed monomial ideal of K\"onig type generated in one degree.

\begin{Remark}\label{rem1}
{\em
Let $I$ be a monomial ideal and $I^\wp$ be its polarization. For a monomial $u=\prod_{i=1}^n x_i^{a_i}$, let $u^\wp=\prod_{i=1}^n \prod_{j=1}^{a_i} x_{ij}$.  Then it is easy to see that $u_1,\ldots,u_h$ is a regular sequence in $I$ if and only if $u_1^\wp,\ldots,u_h^\wp$ is a regular sequence in $I^\wp$. So from the equality $\height I=\height I^\wp$,
we deduce that $I$ is of K\"onig type if and only if $I^\wp$ is of K\"onig type.}
\end{Remark}

A squarefree monomial ideal $I$ generated in degree $d$ is called \textit{very well-covered}, if $|\bigcup_{u\in G(I)} \Supp(u)|=(\height I)d$.

\begin{Theorem}\label{very}
Let $I$ be an unmixed monomial ideal generated in degree $d$. If $I$ is of K\"onig type, then $I^\wp$ is very well-covered.
\end{Theorem}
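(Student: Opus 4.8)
The plan is to pass to the polarization and reduce everything to a statement about the minimal primes of the squarefree ideal $I^\wp$. By Remark~\ref{rem1}, $I^\wp$ is again of K\"onig type, and since polarization preserves the degree of each generator, $I^\wp$ is a squarefree monomial ideal generated in degree $d$ with $\height I^\wp=\height I=h$. By Lemma~\ref{monomialkoenig} there is a regular sequence of monomials $v_1,\dots,v_h\in G(I^\wp)$; being a regular sequence of monomials, the $v_i$ have pairwise disjoint supports, and being generators of $I^\wp$ each is squarefree of degree $d$, so $|\Supp(v_i)|=d$. Put $U=\bigcup_{i=1}^h\Supp(v_i)$, so that $|U|=hd$ and $U\subseteq W:=\bigcup_{v\in G(I^\wp)}\Supp(v)$. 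Thus $|W|\geq hd$, and the whole problem reduces to proving the reverse inclusion $W\subseteq U$.

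The key point is that $I^\wp$ is unmixed, and I would deduce this from the unmixedness of $I$ as follows. Write the irredundant irreducible decomposition $I=\bigcap_t Q_t$ into irreducible monomial ideals; each $\sqrt{Q_t}$ is an associated prime of $I$, so unmixedness of $I$ forces every $Q_t$ to have the form $(x_{i}^{c_{t,i}}\: i\in\sigma_t)$ with $|\sigma_t|=h$. Since polarization commutes with intersection (for a common frame of variables, cf.\ \cite{HH}), we get $I^\wp=\bigcap_t Q_t^\wp$, and the polarization of such an irreducible ideal is the complete intersection $Q_t^\wp=(\prod_{j=1}^{c_{t,i}}x_{ij}\: i\in\sigma_t)$ of $h$ squarefree monomials with pairwise disjoint supports, whose minimal primes are obtained by choosing one variable from each of these $h$ supports and hence all have height $h$. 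As $\Min(I^\wp)\subseteq\bigcup_t\Min(Q_t^\wp)$, every minimal prime of $I^\wp$ has height $h$, i.e.\ $I^\wp$ is unmixed. This transport of unmixedness through polarization is the step I expect to be the main obstacle; it genuinely requires the strong notion of unmixedness (all associated primes of equal height), as the example $I=(x^2,xy)$ shows: there $I^\wp=(x_{11}x_{12},x_{11}x_{21})$ has minimal primes of heights $1$ and $2$, reflecting the embedded prime of $I$, so such $I$ must be excluded by the hypothesis.

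Finally I would combine unmixedness with the K\"onig structure. Let $\mathfrak q$ be any minimal prime of $I^\wp$; it is generated by variables and must meet each $\Supp(v_i)$, since $v_i\in G(I^\wp)$. As the supports $\Supp(v_1),\dots,\Supp(v_h)$ are pairwise disjoint, $\mathfrak q$ contains at least one variable from each, so $\height\mathfrak q\geq h$; by unmixedness $\height\mathfrak q=h$, whence $\mathfrak q$ contains exactly one variable from each $\Supp(v_i)$ and no other variables, giving $\mathfrak q\subseteq U$. On the other hand, every variable $y\in W$ divides some generator $w\in G(I^\wp)$, and since $w/y\notin I^\wp$ (as $w$ is a minimal generator) there is a minimal prime $\mathfrak q$ of $I^\wp$ with $w/y\notin\mathfrak q$; because $w\in I^\wp\subseteq\mathfrak q$ and $\mathfrak q$ is generated by variables, this forces $y\in\mathfrak q$, hence $y\in\mathfrak q\subseteq U$. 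This proves $W\subseteq U$, so $W=U$ and $|W|=hd=(\height I^\wp)d$, i.e.\ $I^\wp$ is very well-covered.
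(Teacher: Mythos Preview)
Your proof is correct and follows essentially the same route as the paper: reduce to the squarefree case via polarization and unmixedness transfer, then use that a minimal prime containing a variable outside $U=\bigcup_i\Supp(v_i)$ would have height $>h$, contradicting unmixedness. The only cosmetic differences are that the paper cites \cite[Corollary~2.6]{Fa} for the unmixedness of $I^\wp$ rather than arguing via irreducible decompositions, and phrases the final step as a direct contradiction (exhibiting the regular sequence $x_\ell,u_1,\ldots,u_h$ of length $h+1$ inside a minimal prime) instead of first showing every minimal prime is contained in $U$.
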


\begin{proof}
Note that $I$ is unmixed if and only if $I^\wp$ is unmixed (see \cite[Corollary 2.6]{Fa}). So in view of Remark~\ref{rem1} we may assume that $I$ is squarefree.
Let $I$ be of K\"onig type and set $h=\height I$. Then there exists a regular sequence $u_1,\ldots,u_h\in G(I)$.
Set $A=\bigcup_{u\in G(I)} \Supp(u)$ and by contradiction assume that $hd\neq |A|$. Since $u_1,\ldots,u_h$ is a regular sequence of squarefree monomials, this implies that $hd<|A|$. Then there exists  $x_{\ell}\in A$ which does not divide $u_1,\ldots,u_h$. Then $x_{\ell}$ divides $v$ for some $v\in G(I)$. So there exists a minimal prime ideal $P$ of $I$ which contains $x_{\ell}$. Therefore $(x_{\ell},u_1,\ldots,u_h)\subseteq P$, which implies that $\height P\geq h+1$. This is a contradiction, since $I$ is unmixed. So $|A|=hd$ as desired.
\qed
\end{proof}

The converse of Theorem~\ref{very} is true in the case that $I$ is generated in degree $2$.  Indeed, since $I$ is unmixed, then $I^\wp$ is unmixed as well. So by \cite[Lemma 2.1]{CRT}, if $I^\wp$ is very well-covered, then $I^\wp$ is of K\"onig type. Therefore,  Remark~\ref{rem1} implies that $I$ is of K\"onig type.
But this is not the case in general.
To see this, consider the ideal $I=(x_1x_2x_3,x_1x_3x_4,x_1x_4x_6,x_3x_4x_5)$ in $S=K[x_1,\ldots,x_6]$. Then $I$ is unmixed with $\height I=2$, $d=3$ and $n=6=(\height I)d$,. So $I$ is unmixed and very well-covered, while any regular sequence of monomials in $I$ has length $1$. So $I$ is not of K\"onig type.

\section{Binomial edge ideals of K\"onig type}
\label{three}

Let  $K$  be a field  and  $S=K[x_1,\ldots,x_n,y_1,\ldots,y_n]$ the polynomial ring over $K$ in the variables $x_1,\ldots, y_n$. For $1\leq i<j\leq n$ we set  $f_{ij}=x_iy_j-x_jy_i$.

Let $G$ be a simple graph on the vertex set $[n]$ with set of edges $E(G)$. The ideal $J_G=(f_{ij}\: i<j, \{i,j\}\in E(G))$ is the  {\em  binomial edge ideal} of $G$.
Throughout this section $<$ denotes the lex order induced by $x_1>\cdots>x_n>y_1>\cdots>y_n$.

\begin{Lemma}
\label{minimaledge}
Let $G$ be a graph on $[n]$. Then the set $\{f_{ij}\: i<j, \{i,j\}\in E(G)\}$ is a minimal set of generators of $J_G$.
\end{Lemma}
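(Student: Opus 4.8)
The plan is to exploit the fact that all the generators $f_{ij}=x_iy_j-x_jy_i$ are homogeneous of one and the same degree, so that minimality of the generating set reduces to $K$-linear independence. In the standard grading of $S$ each $f_{ij}$ has degree $2$. For a graded ideal generated by homogeneous elements that are all of the same degree $d$, a set of such elements is a minimal system of generators if and only if the elements are linearly independent over $K$. This is the graded Nakayama principle: writing $\mm$ for the graded maximal ideal, one has $(\mm J_G)_2=0$, since $\mm$ starts in degree $1$ and $J_G$ starts in degree $2$; hence $(J_G/\mm J_G)_2=(J_G)_2$, which is precisely the $K$-span of the $f_{ij}$, and the minimal number of degree-$2$ generators equals $\dim_K(J_G)_2$. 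So I would first record this reduction and then be left with the task of proving that the binomials $f_{ij}$, indexed by the edges $\{i,j\}\in E(G)$ with $i<j$, are $K$-linearly independent.

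The key step is a direct inspection of the monomials that occur. Each $f_{ij}$ with $i<j$ involves exactly the two monomials $x_iy_j$ and $x_jy_i$, and I claim that over all edges these $2|E(G)|$ monomials are pairwise distinct. A monomial of the form $x_ay_b$ determines the ordered pair $(a,b)$ uniquely. The positive terms $x_iy_j$ all have first index smaller than second index ($i<j$), whereas the negative terms $x_jy_i$ all have first index larger than second; hence no positive term can coincide with any negative term. Among the positive terms, $x_iy_j=x_{i'}y_{j'}$ forces $(i,j)=(i',j')$, and therefore $\{i,j\}=\{i',j'\}$; the same holds among the negative terms. Consequently a relation $\sum_{\{i,j\}\in E(G)}c_{ij}f_{ij}=0$ with $c_{ij}\in K$ rewrites as $\sum c_{ij}x_iy_j-\sum c_{ij}x_jy_i=0$, and since all the monomials appearing are distinct, each coefficient $c_{ij}$ must vanish. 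This yields the desired linear independence, and combined with the equal-degree reduction it proves that $\{f_{ij}\}$ is a minimal generating set.

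I do not expect a serious obstacle: the only point requiring care is the bookkeeping that the ``positive'' monomials $x_iy_j$ ($i<j$) and the ``negative'' monomials $x_jy_i$ never collide, which is exactly where the convention $i<j$ is used to keep the two families separated by the relative size of their indices. Everything else is the routine application of the graded Nakayama reduction and the observation that distinct edges produce distinct monomials.
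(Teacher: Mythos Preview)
Your proof is correct and follows essentially the same approach as the paper: both reduce minimality to $K$-linear independence (since all generators are homogeneous of degree $2$) and then verify linear independence by observing that the relevant monomials are pairwise distinct. The only cosmetic difference is that the paper checks distinctness just of the initial monomials $\ini_<(f_{ij})=x_iy_j$ with respect to the lex order, whereas you verify that all $2|E(G)|$ monomials occurring in the $f_{ij}$ are distinct; either observation yields the linear independence immediately.
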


\begin{proof}
The initial monomials of the generators $f_{ij}$ are pairwise distinct. This yields the desired conclusion.
\end{proof}

The binomials which belong to a minimal set of generators  of $J_G$ are described in the following

\begin{Lemma}
\label{minimalgenerators}
Let $G$ be a graph on $[n]$  and $f\in J_G$ a binomial.  If $f$ is part of a minimal set of generators of $J_G$, then the exists and edge $\{i, j\}$ of $G$  with $i<j$ such that $f=\pm f_{ij}$.
\end{Lemma}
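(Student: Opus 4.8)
The plan is to work with the fine $\ZZ^n$-grading on $S$ in which $\deg x_i = \deg y_i = \eb_i$, the $i$-th standard basis vector. Each generator $f_{ij} = x_iy_j - x_jy_i$ is homogeneous of multidegree $\eb_i+\eb_j$ for this grading, so $J_G$ is $\ZZ^n$-graded, and the whole argument can be carried out one multigraded component at a time. This refined grading is the real engine of the proof; with only the coarse $\ZZ$-grading the conclusion is far less transparent.

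First I would record that $J_G$ contains no (nonzero) monomial. The $K$-algebra map $\varphi\colon S\to K[t_1,\dots,t_n]$ sending $x_i,y_i\mapsto t_i$ kills every $f_{ij}$, so $J_G\subseteq\ker\varphi$, whereas $\varphi$ sends any nonzero monomial to a nonzero monomial. Consequently, if $f=u-v\in J_G$ is a binomial with $u,v$ monomials, then $u$ and $v$ must share the same $\ZZ^n$-multidegree $\alpha$: if they differed, $u$ and $v$ would be distinct multigraded components of $f\in J_G$, each forced into $J_G$, contradicting the absence of monomials (this also excludes the degenerate case $v=0$). Hence $f$ is multihomogeneous of some multidegree $\alpha$.

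The key computation is that for each edge $\{i,j\}$ with $i<j$ one has $(J_G)_{\eb_i+\eb_j}=Kf_{ij}$ and $(\mm J_G)_{\eb_i+\eb_j}=0$, where $\mm$ is the irrelevant maximal ideal. Writing an element of $J_G$ in multidegree $\eb_i+\eb_j$ as $\sum_{\{k,l\}\in E(G)} a_{kl}f_{kl}$ and comparing multidegrees forces $\{k,l\}=\{i,j\}$ and $a_{kl}\in K$; and any product $a\cdot g$ with $a\in\mm$ and $g\in J_G$ has total degree at least $3$, too large to reach degree $\eb_i+\eb_j$. It follows that the images $\overline{f_{ij}}$ span $J_G/\mm J_G$ (the $f_{ij}$ generate) and are linearly independent (each lives in its own multidegree, where $\mm J_G$ vanishes), so $J_G/\mm J_G$ is concentrated in the multidegrees $\eb_i+\eb_j$ over edges and $\dim_K J_G/\mm J_G=|E(G)|$, consistent with Lemma~\ref{minimaledge}.

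Finally, suppose $f$ belongs to a minimal set of generators $\{f,g_2,\dots,g_m\}$ of $J_G$. By the standard theory of minimal generators over a graded ring, the residues form a $K$-basis of $J_G/\mm J_G$; in particular $\overline f\neq0$, so $(J_G/\mm J_G)_\alpha\neq0$ and therefore $\alpha=\eb_i+\eb_j$ for some edge $\{i,j\}$ of $G$. Then $f\in(J_G)_\alpha=Kf_{ij}$, so $f=c\,f_{ij}$ for some $c\in K$; and since $f=u-v$ is a difference of two monic monomials while $c\,f_{ij}$ has monomials $x_iy_j,x_jy_i$ with coefficients $c,-c$, we must have $c=\pm1$, giving $f=\pm f_{ij}$. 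I expect the main obstacle to be not any single hard computation but the bookkeeping that simultaneously forces $f$ to be multihomogeneous and confines its multidegree to a minimal-generator multidegree — i.e. marrying the no-monomials fact to the multigraded count in the previous paragraph.
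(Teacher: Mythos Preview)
Your proof is correct and uses the same essential ingredients as the paper's: the $\ZZ^n$-grading with $\deg x_i=\deg y_i=\eb_i$, the specialization $\varphi$ sending $x_i,y_i$ to a common variable to exclude monomials from $J_G$, and minimality via $f\notin\mm J_G$. The organization differs only slightly---the paper first argues the total degree is $2$ and at the end shows $\{i,j\}\in E(G)$ by looking at $\ini_<(J_G)$, whereas you establish multihomogeneity first and then read off both the degree bound and the edge condition simultaneously from the multigraded structure of $J_G/\mm J_G$---but the two arguments are essentially the same.
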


\begin{proof}
Let $\epsilon_i$ be the $i$th canonical unit vector of $\ZZ^n$. We set $\deg(x_i)=\deg(y_i)=\epsilon_i$. Each $f_{ij}$ is homogeneous of degree $\epsilon_i+\epsilon_j$, and hence $J_G$ is a $\ZZ^n$-graded ideal.

Let $f=u-v\in J_G$, then the total degree of $f$ is $2$.  Otherwise,  $f\in\mm J_G$, where $\mm$ is the graded maximal ideal of $S$, which by Nakayama's lemma is a contradiction. It follows that $f$ is a $K$-linear combination of the binomials $f_{ij}$. Hence,  the monomials $u$ and $v$ are of the form $x_ky_l$.

Since $J_G$ is $\ZZ^n$-graded, all $\ZZ^n$-graded components belong to $J_G$. Suppose  $u$ and $v$ have different $\ZZ^n$-degree, then $u,v\in J_G$. If we apply the map $\varphi$ which replaces each  $y_i$ by $x_i$, then $\varphi(J_G)=0$ while $\varphi(u)\neq 0$, a contradiction. It follow that $u$ and $v$ have the same $\ZZ^n$ degree, say $\deg(u)=\deg(v)=\epsilon_i+\epsilon_j$. This implies that $u,v\in \{x_iy_j,x_jy_i\}$. We may assume that $u=x_iy_j$. Then $v=x_jy_i$, since $f\neq 0$. Therefore, $f=\pm f_{ij}$.
Now we show that if  $f_{ij}\in J_G$, then $\{i,j\}$ is an edge of $G$. The set $\{x_ky_l\:\{k,l\} \in E(G)\}$ is the set of monomials of degree $2$ of $\ini_<(J_G)$.  Since $f_{ij}\in J_G$, it follows that $x_iy_j$ belongs to the ideal generated by this set of monomials. This is only possible, if $x_iy_j$ belongs to this set, and this implies that $\{i,j\}\in E(G)$.
\end{proof}

A \emph{path graph} on the vertex set $\{i_1,\ldots,i_n\}=[n]$ is a graph with the edge set $\{\{i_k,i_{k+1}\}:\ 1\leq i\leq n-1\}$ and is denoted by $P_n$. We call the vertices $i_1$ and $i_n$ the {\em endpoints} and $n-1$ the {\em length} of the path.
Since the edge set of a path graph can be seen from its vertex set, we usually denote a path graph by  $P_n: i_1,\ldots,i_n$.

For a graph $G$ and $i,j\in V(G)$, a {\em path from $i$ to $j$} (a path connecting $i$ and $j$) in $G$ is a subgraph of $G$, which is a path graph with the endpoints $i$ and $j$. We call a subgraph $P$ of  $G$ a {\em semi-path} if each component of $P$ is a path  graph. By definition, the length of the semi-path is its number of edges.

\begin{Lemma}
\label{semi-path}
$G$ is a semi-path  if and only if $J_G$ is a complete intersection.
\end{Lemma}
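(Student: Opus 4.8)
The plan is to reduce the statement to a numerical equality. Since $S$ is Cohen--Macaulay one always has $\height J_G\le\mu(J_G)$, with equality exactly when $J_G$ is a complete intersection, and by Lemma~\ref{minimaledge} the minimal generators are the $f_{ij}$, so $\mu(J_G)=|E(G)|$. Thus it suffices to prove that $\height J_G=|E(G)|$ if and only if $G$ is a semi-path.

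For the implication ``$G$ a semi-path $\Rightarrow$ $J_G$ a complete intersection'' I would first observe that being a complete intersection is unchanged if we relabel the vertices, because a permutation of $[n]$ induces an automorphism of $S$ sending $J_G$ to the binomial edge ideal of the relabelled graph. Hence I may assume the vertices are numbered so that the labels increase along each path component (number the components one after another, each consecutively along its path). Then for every edge $\{i,i'\}$ with $i<i'$ the two endpoints are consecutive on a path, so $\ini_<(f_{ii'})=x_iy_{i'}$, and as $\{i,i'\}$ runs over the edges these monomials have pairwise disjoint supports and therefore form a regular sequence of length $|E(G)|$. Because $(x_iy_{i'} : \{i,i'\}\in E(G))\subseteq\ini_<(J_G)$ and $\dim S/J_G=\dim S/\ini_<(J_G)$ by \cite[Theorem 3.3.4]{HH}, I obtain $\height J_G=\height\ini_<(J_G)\ge|E(G)|\ge\height J_G$, forcing $\height J_G=|E(G)|$.

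For the converse I would use that a complete intersection is minimally generated by a regular sequence whose length equals its height; since $\mu(J_G)=\height J_G$, the generators $f_{ij}$ themselves form a regular sequence. The elementary point I rely on is that, for homogeneous elements of the Cohen--Macaulay ring $S$, every sub-collection $T$ of a regular sequence $g_1,\dots,g_m$ again has $\height(T)=|T|$: one has $\height(g_1,\dots,g_m)=m$, and adding the $m-k$ missing elements raises the height by at most $m-k$, so a subset of size $k$ satisfies $m\le\height(T)+(m-k)$, whence $\height(T)=k$. With this in hand I exclude the two ways $G$ could fail to be a semi-path. If a vertex $v$ has degree at least $3$ with neighbours $a,b,c$, then the three binomials $f_{va},f_{vb},f_{vc}$ all lie in the prime $(x_v,y_v)$ of height $2$; but a length-$3$ regular subsequence must generate an ideal of height $3$, a contradiction. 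If $G$ contains a cycle on $k$ vertices, the $k$ binomials of its edges generate an ideal contained in $J_{K_k}$, the prime ideal of $2\times2$ minors of a $2\times k$ matrix of indeterminates, whose height is $k-1<k$; again a length-$k$ regular subsequence cannot fit inside a prime of height $k-1$. Therefore $G$ is acyclic with maximum degree at most $2$, which is exactly the condition that every connected component is a path, i.e. $G$ is a semi-path.

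The delicate part is the converse, and in particular handling high-degree vertices and cycles at once. Both are dispatched by producing, in each case, an explicit prime of too-small height containing the relevant generators --- the coordinate prime $(x_v,y_v)$ for a branch vertex and the determinantal prime $J_{K_k}$ for a cycle --- once the principle that sub-collections of a homogeneous regular sequence keep full height is established. A smaller thing to verify is that the chosen monotone labelling really makes the initial monomials pairwise coprime, which uses only that consecutive edges on a path share exactly one vertex.
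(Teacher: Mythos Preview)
Your argument is correct. The forward direction is essentially the paper's: after a monotone relabelling along each path, the initial monomials $x_iy_{i+1}$ are pairwise coprime, hence a regular sequence of length $|E(G)|$, and since $\mu(J_G)=|E(G)|$ this forces $J_G$ to be a complete intersection.

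For the converse you take a genuinely different route. The paper proceeds by induction on $|E(G)|$: removing one edge gives a semi-path $G'$ by induction, and if the deleted edge attaches to an interior vertex $v$ of a path in $G'$ then three of the generators lie in $(x_v,y_v)$, contradicting that any three form a regular sequence of height~$3$. You instead characterise semi-paths globally as graphs with maximum degree at most~$2$ and no cycles, and exclude each obstruction directly: a vertex of degree $\ge 3$ via the same height-$2$ prime $(x_v,y_v)$, and a $k$-cycle via the determinantal prime $J_{K_k}$ of height $k-1$. Both proofs share the branch-vertex trick, but your use of the determinantal prime to kill cycles is new relative to the paper and makes the argument non-inductive and slightly more transparent; in exchange you invoke the (standard) fact that $I_2$ of a generic $2\times k$ matrix is prime of height $k-1$, which the paper avoids. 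Your justification that subsets of a homogeneous regular sequence in $S$ retain full height is fine; one could also simply note that homogeneous regular sequences in $S$ may be permuted, so any subset is itself regular.
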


\begin{proof}

Suppose that $G$ is  a semi-path graph. We want to show that $J_G$ is a complete intersection. This is the case if the binomial edge ideal of each component of $G$ is a complete intersection.  We may therefore assume that $G$ is connected and that  $\{i,i+1\}$ for $i=1,\ldots, n$  are the edges of $G$. Then
$J_G=(f_{1,2},\ldots, f_{n,n+1})$. We consider the lexicographic order on $S$ induced by $x_1>x_2>\cdots >x_{n+1}>y_1>\cdots>y_{n+1}$. Then $\ini_<(f_{i,i+1})=x_iy_{i+1}$ for all $i$. Since $x_1y_2,\ldots, x_{n}y_{n+1}$ is a regular sequence, it  follows  $f_{1,2},\ldots, f_{n,n+1}$ is a regular sequence.

Conversely, assume that $J_G$ is a complete intersection. Then $J_G=(g_1,\ldots, g_{n})$ where $g_1,\ldots,g_{n}$
is a regular sequence. Then $\height  J_G=n$ and $J_G$ is minimally generated by $n$ elements. Since the set of binomials
$\{f_{ij}\:\; \{i,j\}\in E(G),\ i<j\}$ is a minimal set of generators of $J_G$ it follows that the elements of this set form a regular sequence. Thus we may assume that  $g_k=f_{i_kj_k}$ with $\{i_k, j_k\}\in E(G)$ and $i_k<j_k$  for $k=1,\ldots,n$.

We proceed by induction on the number of edges of $G$. If $G$ has only one or two edges, then  $G$ is a semi-path and there is nothing to prove. So now assume that $G$ have more than two edges. Then $n\geq 3$. Since $g_1,\ldots,g_{n-1}$ is also a complete intersection, our induction hypothesis implies that the subgraph $G'$ of $G$ with  $E(G')=\{\{i_k, j_k\}\: k=1,\ldots,n-1\}$ is a semi-path. If the edge
$\{i_{n},j_{n}\}$ is a component of $G$ or if $i_{n}$ or $j_{n}$ is an endpoint of  one of the components of $G'$, then $G$ is a semi-path. Suppose these cases  do not occur. Then there exists and integer $k$ such  that $j_{k}=i_{k+1}$ and $i_{n}=j_k$ or $j_{n}=j_k$. Assume that $i_n=j_k$.  Then
$g_n=x_{i_n}y_{j_n}-x_{j_n}y_{i_n}$,  $g_k=x_{i_k}y_{i_n} -
x_{i_n}y_{i_k}$ and $g_{k+1}= x_{i_n}y_{j_{k+1}}-
x_{j_{k+1}}y_{i_n}$. This implies that $g_k,g_{k+1},g_n\in (x_{i_n},y_{i_n})$ which is a contradiction since $g_k,g_{k+1},g_n$ is a regular sequence which implies that $\height( g_k,g_{k+1},g_n )=0$,  while $\height (x_{i_n},y_{i_n})=2$. The case $j_n=j_k$ is treated similarly.
\end{proof}

\begin{Corollary}\label{corm}
Let $G$ be a simple graph. Then the binomial edge ideal $J_G$ is  of {\em K\"onig type}, if and only if $J_G$ contains a regular sequence $g_1,\ldots,g_h$ such that each $g_k$ is a binomial of the form $f_{ij}$, where $\{i,j\}\in E(G)$ and $h=\height J_G$.
\end{Corollary}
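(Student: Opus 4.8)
The plan is to prove both implications by reducing everything to the edge binomials $f_{ij}$ and to the semi-path dictionary of Lemma~\ref{semi-path}.

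For the direction that the existence of such a regular sequence forces $J_G$ to be of K\"onig type, I would start from a regular sequence $g_1=f_{i_1j_1},\ldots,g_h=f_{i_hj_h}$ with $h=\height J_G$, and let $G'$ be the subgraph of $G$ whose edges are the $\{i_k,j_k\}$, so that $(g_1,\ldots,g_h)=J_{G'}$ is a complete intersection. Lemma~\ref{semi-path} then shows $G'$ is a semi-path, and the point of passing to a semi-path is that I can orient each of its paths and choose the lex order that decreases along each path: along a path $v_1,v_2,\ldots$ the binomial of the edge $\{v_t,v_{t+1}\}$ has initial monomial $x_{v_t}y_{v_{t+1}}$, so consecutive edges produce the pairwise coprime monomials $x_{v_t}y_{v_{t+1}}$ and $x_{v_{t+1}}y_{v_{t+2}}$, while monomials coming from distinct components are automatically coprime. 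Hence $\ini_<(g_1),\ldots,\ini_<(g_h)$ is a regular sequence of monomials; since by Lemma~\ref{minimaledge} the $g_k$ lie in the minimal generating set $\{f_{ij}\}$ of $J_G$, the ideal $J_G$ is of K\"onig type. Concretely this is exactly the order already produced in the proof of Lemma~\ref{semi-path}.

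For the converse, suppose $J_G$ is of K\"onig type with respect to $g_1,\ldots,g_h$ and a monomial order $<$. First I would note that every minimal generator of $J_G$ has total degree $2$: by Lemma~\ref{minimaledge} the $f_{ij}$ form a minimal generating set lying in degree $2$, so $J_G/\mm J_G$ is concentrated in degree $2$, and therefore the degree-$2$ component of $J_G$ equals $\sum_{\{i,j\}\in E(G)}Kf_{ij}$. Thus each $g_k$ is a $K$-linear combination of the $f_{ij}$, every monomial appearing in $g_k$ has the shape $x_ay_b$ with $a\neq b$, and I may write $\ini_<(g_k)=x_{p_k}y_{q_k}$. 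The crucial observation is that this monomial occurs in a \emph{unique} $f_{ij}$, the one with $\{i,j\}=\{p_k,q_k\}$; as it occurs in $g_k$ with nonzero coefficient, $\{p_k,q_k\}\in E(G)$, and the companion monomial $x_{q_k}y_{p_k}$ also occurs in $g_k$. Since $x_{p_k}y_{q_k}$ is the $<$-largest term of $g_k$ we get $x_{p_k}y_{q_k}>x_{q_k}y_{p_k}$, whence $\ini_<(f_{\{p_k,q_k\}})=x_{p_k}y_{q_k}=\ini_<(g_k)$. Replacing $g_k$ by $g_k':=f_{\{p_k,q_k\}}$ therefore preserves all initial terms, so $\ini_<(g_1'),\ldots,\ini_<(g_h')$ is still a regular sequence, and since $S$ is Cohen--Macaulay this forces $g_1',\ldots,g_h'$ to be a regular sequence as well (the same passage used in the proof of Lemma~\ref{semi-path}). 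These are edge binomials and $h=\height J_G$, which is what is required.

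I expect the main obstacle to be the replacement step in the converse: the definition of K\"onig type only supplies elements of some unspecified minimal generating set, which need not themselves be the binomials $f_{ij}$, so the heart of the argument is to swap each such $g_k$ for a genuine edge binomial without disturbing its initial monomial. Everything else—the reduction to degree $2$, the appeal to Lemma~\ref{semi-path}, and the passage from a regular sequence of initial terms to a regular sequence—is routine or already available in the paper.
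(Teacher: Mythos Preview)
Your proof is correct and follows the same route as the paper—both directions rest on Lemma~\ref{minimaledge}, Lemma~\ref{minimalgenerators}, and Lemma~\ref{semi-path}. Your treatment of the converse is in fact more careful than the paper's: the paper simply invokes Lemma~\ref{minimalgenerators} to identify minimal binomial generators with the $f_{ij}$, whereas you explicitly handle the possibility that the K\"onig-type witnesses $g_k$ are non-binomial degree-$2$ elements and show how to replace each by an edge binomial with the same initial term, which cleanly closes what could otherwise be read as a small gap.
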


\begin{proof}
By Lemma~\ref{minimalgenerators}, the minimal binomial generators of $J_G$ are of the form $f_{ij}$,  where $\{i,j\}\in E(G)$ and by Lemma~\ref{semi-path} such a sequence is a regular sequence if and only if the  initial monomials of this sequence form a regular sequence. This yields the desired conclusion.
\end{proof}

\begin{Theorem}
\label{lenghtsemipaths}
Let $G$ be a graph on $[n]$ and $d=\dim S/J_G$.
\begin{enumerate}
\item[{\em (a)}] Let $P$  be a semi-path in $G$ of length $r$.  Then $r\leq 2n-d$.

\item[{\em (b)}] The following conditions are equivalent:

\begin{enumerate}
\item[{\em (i)}]  $J_G$ is of  K\"onig type.
\item[{\em (ii)}] There exists a  semi-path in $G$ of length $2n-d$.
\end{enumerate}
\end{enumerate}
\end{Theorem}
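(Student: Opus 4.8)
The plan is to deduce the whole statement from two results already available: Lemma~\ref{semi-path}, which identifies semi-paths with graphs whose binomial edge ideal is a complete intersection, and Corollary~\ref{corm}, which says $J_G$ is of K\"onig type precisely when $J_G$ contains a regular sequence of $\height J_G$ binomials of the form $f_{ij}$ with $\{i,j\}\in E(G)$. Since $S$ has $2n$ variables and is Cohen--Macaulay, $\height J_G=\dim S-\dim S/J_G=2n-d$; thus the target length $2n-d$ is exactly $h:=\height J_G$, which is what links these two results to the combinatorial bound.

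For part (a), I would regard a semi-path $P$ of length $r$ as a subgraph of $G$ on $[n]$ carrying exactly its $r$ edges, so that $J_P$ is generated by the corresponding $r$ binomials $f_{ij}$ and $J_P\subseteq J_G$. Lemma~\ref{semi-path} makes $J_P$ a complete intersection, so these $r$ generators form a regular sequence and $\height J_P=r$. The containment $J_P\subseteq J_G$ forces $\height J_P\le\height J_G$, giving $r\le 2n-d$.

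For part (b), the direction (ii)$\Rightarrow$(i) is immediate: a semi-path $P$ of length $2n-d=h$ yields, by Lemma~\ref{semi-path}, a regular sequence of $h$ binomials $f_{ij}$ with edges in $G$, and since $h=\height J_G$, Corollary~\ref{corm} gives that $J_G$ is of K\"onig type. For (i)$\Rightarrow$(ii), Corollary~\ref{corm} provides a regular sequence $g_1,\ldots,g_h$ with $g_k=f_{i_kj_k}$, $\{i_k,j_k\}\in E(G)$ and $h=2n-d$. Letting $P$ be the subgraph with edges $\{i_k,j_k\}$, one has $J_P=(g_1,\ldots,g_h)$; as this ideal has height $h$ it cannot be generated by fewer than $h$ elements, so the $h$ edges are pairwise distinct and $J_P$ is a complete intersection. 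Lemma~\ref{semi-path} then makes $P$ a semi-path, necessarily of length $h=2n-d$.

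All steps are short, and I expect no serious obstacle: the only place requiring care is the bookkeeping in (i)$\Rightarrow$(ii), where one must confirm that the edges $\{i_k,j_k\}$ are pairwise distinct so that $P$ genuinely has $2n-d$ edges and $J_P$ is minimally generated by exactly these $h$ binomials (via Lemma~\ref{minimaledge}). Once this is settled, Lemma~\ref{semi-path} converts ``complete intersection'' into ``semi-path'', and the equivalence drops out.
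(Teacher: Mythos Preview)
Your proposal is correct and follows essentially the same route as the paper: both parts rest on Lemma~\ref{semi-path} and Corollary~\ref{corm}, with $\height J_G=2n-d$ providing the bridge. Your only addition is the explicit check in (i)$\Rightarrow$(ii) that the edges $\{i_k,j_k\}$ are pairwise distinct, which the paper leaves implicit; your argument via Krull's height theorem is fine, though one could also note directly that equal binomials cannot occur in a regular sequence.
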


\begin{proof}
(a) By Lemma~\ref{semi-path},  $J_P$ is generated by a regular sequence whose length is $r$. It follows that $r=\height J_P\leq \height J_G= 2n-d$.

(b) First note that by Lemma~\ref{minimalgenerators}, if $f_{ij}\in J_G$, then $\{i,j\}$ is an edge of $G$.
Let $h=\height J_G=2n-d$. By Corollary~\ref{corm}, $J_G$ is of K\"onig type if and only if $J_G$ contains a regular sequence $g_1,\ldots,g_h$ where each $g_k$ is a of the form $f_{ij}$ for some $\{i,j\}\in E(G)$.
Now, by Lemma~\ref{semi-path},  $g_1,\ldots g_h$ is a regular sequence in $J_G$, if and only if the subgraph $P$ of $G$ whose edges correspond to the leading terms of
the $g_i$'s is a semi-path of length $h$.
This proves the equivalence of (i)\iff (ii).
\end{proof}

Recall that a subgraph $H$ of $G$ is called a {\em spanning subgraph}, whenever $V(H)=V(G)$.   A connected graph $G$ is called  {\em traceable}, if it has a spanning path $P$ as a subgraph. More generally, $G$ is called traceable if each of its connected components is traceable.

\begin{Proposition}
\label{somayehchanged}
Let $G$ be a graph.
\begin{enumerate}
\item[{\em (a)}] If $G$ is traceable,  then $J_G$ is of K\"onig type. In particular, closed graphs are of K\"onig type.

\item[{\em (b)}] If $J_G$ unmixed and of K\"onig type,  then $G$ is traceable.
\end{enumerate}
\end{Proposition}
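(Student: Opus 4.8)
The plan is to derive both directions from Theorem~\ref{lenghtsemipaths} together with the standard description of the minimal primes of a binomial edge ideal in \cite{HHHKR}. Write $c$ for the number of connected components $G_1,\dots,G_c$ of $G$, put $n_i=|V(G_i)|$ so that $\sum_{i=1}^c n_i=n$, and recall that the ideal $P_\emptyset(G)$ obtained by replacing each $G_i$ by the complete graph on $V(G_i)$ is a minimal prime of $J_G$ of height $\sum_i(n_i-1)=n-c$. In particular $J_G\subseteq P_\emptyset(G)$, so $\height J_G\le n-c$ for every graph $G$.

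For (a), assume $G$ is traceable and choose a Hamiltonian path in each component; their disjoint union is a spanning semi-path $P$ with exactly $c$ path components, hence of length $\sum_i(n_i-1)=n-c$. By Lemma~\ref{semi-path} the ideal $J_P$ is a complete intersection, so $\height J_P=n-c$, and from $J_P\subseteq J_G\subseteq P_\emptyset(G)$ we get $\height J_G=n-c$. Thus $P$ is a semi-path whose length equals $\height J_G=2n-\dim S/J_G$, and Theorem~\ref{lenghtsemipaths}(b) shows that $J_G$ is of K\"onig type. Since a connected closed graph is traceable, closed graphs are covered as a special case.

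For (b), let $J_G$ be unmixed and of K\"onig type and set $h=\height J_G$. Because $P_\emptyset(G)$ is a minimal prime of height $n-c$, unmixedness forces $\height J_G=\height P_\emptyset(G)=n-c$, so $h=n-c$. By Theorem~\ref{lenghtsemipaths}(b) there is a semi-path $P\subseteq G$ of length $h=n-c$. Every edge of $P$ lies in a single $G_i$, so $P$ is the disjoint union of semi-paths $P_i\subseteq G_i$; let $p_i$ and $q_i$ denote the numbers of vertices and of path components of $P_i$, so that $P_i$ has $p_i-q_i$ edges.

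Counting edges gives $\sum_i(p_i-q_i)=n-c=\sum_i(n_i-1)$. Inside the connected graph $G_i$ a semi-path covering $p_i\le n_i$ vertices in $q_i\ge 1$ components satisfies $p_i-q_i\le n_i-1$ (with $p_i-q_i=0=n_i-1$ forced when $G_i$ is a single vertex). Hence each term $(n_i-1)-(p_i-q_i)\ge 0$ has zero sum and therefore vanishes, giving $p_i-q_i=n_i-1$; together with $p_i\le n_i$ and $q_i\ge 1$ this yields $q_i=1$ and $p_i=n_i$ for every nontrivial component, i.e. $P_i$ is a Hamiltonian path of $G_i$ (a single-vertex component being trivially traceable). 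Thus each component, and hence $G$, is traceable. The main obstacle is pinning down $\height J_G=n-c$: in (a) it comes from the squeeze $J_P\subseteq J_G\subseteq P_\emptyset(G)$, whereas in (b) it is precisely where unmixedness is indispensable. For the star $K_{1,m}$ with $m\ge 3$, the ideal $J_G$ is of K\"onig type (a path through the centre has length $2=\height J_G$) yet it is neither unmixed nor traceable, which shows that the unmixed hypothesis cannot be dropped.
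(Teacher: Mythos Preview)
Your proof is correct and follows essentially the same route as the paper's: both directions rest on Theorem~\ref{lenghtsemipaths} combined with the identification of $\height J_G$ via the minimal prime $P_\emptyset(G)$ from \cite{HHHKR}. The only cosmetic difference is that the paper first reduces to the connected case (so the semi-path of length $n-1$ is automatically a single spanning path), whereas you keep the component count $c$ throughout and do the bookkeeping $p_i-q_i=n_i-1$ explicitly; your closing $K_{1,m}$ example is a nice addition.
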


\begin{proof} Let $V(G)=[n]$. Without loss of generality we may assume that $G$ is connected.

(a) Since $G$ is traceable, there exists a path $P$ in $G$ with $V(P)=[n]$. Then by Lemma~\ref{semi-path},  the ideal $J_P\subset J_G$ is generated  by a regular sequence of length $n-1$ of binomials of the form $f_{ij}$. In particular, $\height J_G\geq n-1$.  Since $G$ is connected, we have $\dim S/J_G\geq n+1$ (see \cite[Corollary 3.3]{HHHKR}). Therefore, $\height J_G\leq n-1$. Hence we conclude that $\height J_G=n-1$. This shows that $J_G$ is of K\"onig type.
The second statement follows from the observation that any closed graph is traceable.

(b) By Theorem~\ref{lenghtsemipaths},  there exists a  semi-path $P$ in $G$ of length $r=2n-d$, where $d=\dim S/J_G$. Since $J_G$ is unmixed
we have $d=n+1$,  see proof of \cite[Corollary 3.4]{HHHKR}. It follows that $r=n-1$. Any semi-path of $G$ of length $n-1$ is indeed a path. This implies that $P$ is a path of length $n-1$ and hence is a spanning path of $G$.
\end{proof}

There are  graphs which are neither closed, nor bipartite, nor chordal, nor traceable,  but whose binomial  edge ideal is of K\"onig type.  The graph $G$ in Figure~\ref{better} is such a graph. Indeed, it can be seen that $\dim S/J_G=9$, so that $2n-d=5$ which is the length of the path  $P: 1,2,3,4,5,6$.  On the other hand, it cannot be unmixed because it is not traceable.

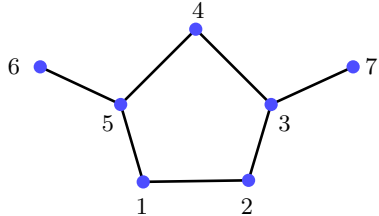
\begin{figure}[hbt]
\begin{center}
\newrgbcolor{ududff}{0.30196078431372547 0.30196078431372547 1.}
\psset{xunit=0.5cm,yunit=0.5cm,algebraic=true,dimen=middle,dotstyle=o,dotsize=5pt 0,linewidth=2.pt,arrowsize=3pt 2,arrowinset=0.25}
\begin{pspicture*}(3.,2.)(13.,9.)
\psline[linewidth=1.pt](8.,8.)(6.,6.)
\psline[linewidth=1.pt](6.,6.)(6.6,3.94)
\psline[linewidth=1.pt](8.,8.)(10.,6.)
\psline[linewidth=1.pt](10.,6.)(9.4,3.98)
\psline[linewidth=1.pt](6.6,3.94)(9.4,3.98)
\psline[linewidth=1.pt](10.,6.)(12.18,7.)
\psline[linewidth=1.pt](6.,6.)(3.86,7.)
\begin{scriptsize}
\psdots[dotstyle=*,linecolor=ududff](8.,8.)
\psdots[dotstyle=*,linecolor=ududff](6.,6.)
\psdots[dotstyle=*,linecolor=ududff](6.6,3.94)
\psdots[dotstyle=*,linecolor=ududff](10.,6.)
\psdots[dotstyle=*,linecolor=ududff](9.4,3.98)
\psdots[dotstyle=*,linecolor=ududff](12.18,7.)
\psdots[dotstyle=*,linecolor=ududff](3.86,7.)
\rput[ll] (5.5, 5.5) {$5$}
\rput[ll] (10.2, 5.5) {$3$}
\rput[ll] (7.9,8.5) {$4$}
\rput[ll] (6.4, 3.3) {$1$}
\rput[ll] (9.2, 3.3) {$2$}
\rput[ll] (3, 7) {$6$}
\rput[ll] (12.5, 7) {$7$}
\end{scriptsize}
\end{pspicture*}
\end{center}
\caption{$5$-cycle  with whiskers}
\label{better}
\end{figure}

Consider the graph $G$ in Figure~\ref{betternew}. The semi-path $P$ with the edges $\{1,2\},\{2,3\},\\ \{3,4\},\{5,6\}$ has length 4 and it is a semi-path in $G$ of largest length in $G$. On the other hand,  $\dim S/J_G=7$.
Since  $2n-d=12-7=5>4$, $J_G$ is not of K\"onig type.

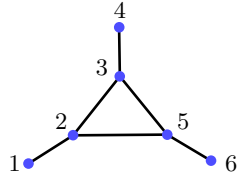
\begin{figure}[hbt]
\begin{center}
\newrgbcolor{ududff}{0.30196078431372547 0.30196078431372547 1.}
\psset{xunit=0.4cm,yunit=0.4cm,algebraic=true,dimen=middle,dotstyle=o,dotsize=4pt 0,linewidth=1.6pt,arrowsize=3pt 2,arrowinset=0.25}
\begin{pspicture*}(6.,3.)(14.,11.)
\psline[linewidth=1.pt](10.,8.)(8.447440930048627,6.0400146440895695)
\psline[linewidth=1.pt](8.447440930048627,6.0400146440895695)(11.573395339943344,6.063518060705319)
\psline[linewidth=1.pt](11.573395339943344,6.063518060705319)(10.,8.)
\psline[linewidth=1.pt](10.,8.)(9.975163010072361,9.636037386299284)
\psline[linewidth=1.pt](8.447440930048627,6.0400146440895695)(6.966725683256391,5.0998779794595785)
\psline[linewidth=1.pt](11.573395339943344,6.063518060705319)(13.030607170119827,5.193891645922578)
\begin{scriptsize}
\psdots[dotstyle=*,linecolor=ududff](10.,8.)
\psdots[dotstyle=*,linecolor=ududff](8.447440930048627,6.0400146440895695)
\psdots[dotstyle=*,linecolor=ududff](11.573395339943344,6.063518060705319)
\psdots[dotstyle=*,linecolor=ududff](9.975163010072361,9.636037386299284)
\psdots[dotstyle=*,linecolor=ududff](6.966725683256391,5.0998779794595785)
\psdots[dotstyle=*,linecolor=ududff](13.030607170119827,5.193891645922578)
\rput[ll](6.3, 5.1){$1$}
\rput[ll](7.85, 6.5){$2$}
\rput[ll](9.8,10.2){$4$}
\rput[ll](11.9, 6.5){$5$}
\rput[ll](9.2, 8.3){$3$}
\rput[ll](13.5, 5.1){$6$}
\end{scriptsize}
\end{pspicture*}
\end{center}
\caption{Triangle with whiskers}
\label{betternew}
\end{figure}

The following theorem shows the existence of special system of parameters for binomial edge ideals of K\"onig type.

\begin{Theorem}
\label{parametersystem}
Let $G$ be a graph such that $J_G$ is of
K\"onig type. Then $S/J_G$ has a special system of parameters of the form (\ref{goodsequence}).

\end{Theorem}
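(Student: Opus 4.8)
The plan is to exhibit the sequence $C$ from (\ref{goodsequence}) explicitly and then to verify that it is a genuine system of parameters for $S/J_G$, not merely a potential one. First I would invoke Corollary~\ref{corm}: since $J_G$ is of K\"onig type, it contains a regular sequence $g_1,\dots,g_h$ with $g_k=f_{i_kj_k}$, $\{i_k,j_k\}\in E(G)$, $i_k<j_k$, and $h=\height J_G$. Under the chosen lex order one has $\ini_<(f_{i_kj_k})=x_{i_k}y_{j_k}$, and by Lemma~\ref{semi-path} the subgraph $P$ with edge set $\{\{i_k,j_k\}\}$ is a semi-path, so that $J_P=(g_1,\dots,g_h)\subseteq J_G$ is a complete intersection. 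Because the monomials $x_{i_k}y_{j_k}$ form a regular sequence their supports are pairwise disjoint, whence the indices $i_1,\dots,i_h$ are distinct and the indices $j_1,\dots,j_h$ are distinct. Reading off (\ref{goodsequence}) in this situation gives $C=A\cup\{\,y_{j_k}-x_{i_k}:k=1,\dots,h\,\}$, where $A$ collects the variables occurring in none of the $x_{i_k}y_{j_k}$; this sequence is special and has length $2n-h=\dim S/J_G$.

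Next I would reduce the problem to a zero-dimensionality statement. Writing $L=(C)$, it suffices to prove $\dim S/(J_G+L)=0$, and since $J_P\subseteq J_G$ it is enough to show $\dim S/(J_P+L)=0$. Setting the linear forms of $C$ equal to zero identifies $S/L$ with a polynomial ring $K[t_1,\dots,t_h]$ via the specialization $\pi$ sending $x_{i_k}\mapsto t_k$, $y_{j_k}\mapsto t_k$, and every remaining variable to $0$; thus $S/(J_P+L)\cong K[t_1,\dots,t_h]/\pi(J_P)$, and the task becomes showing that this ring is Artinian.

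The heart of the argument is a local analysis along each connected component of $P$, and here the distinctness of the $i_k$ and of the $j_k$ pays off. At an interior vertex $v$ of a path component, $v$ cannot be the smaller endpoint of both of its edges (that would repeat an $i_k$) nor the larger endpoint of both (that would repeat a $j_k$), so $v$ lies strictly between its two neighbours. Consequently the vertex labels along each path are monotone, and after orienting a component as $a_0<a_1<\dots<a_m$ with edge variable $t^{(s)}$ attached to $\{a_{s-1},a_s\}$, a direct substitution gives $\pi(f_{a_0a_1})=(t^{(1)})^2$, $\pi(f_{a_{m-1}a_m})=(t^{(m)})^2$, and $\pi(f_{a_{s-1}a_s})=(t^{(s)})^2-t^{(s-1)}t^{(s+1)}$ for the interior edges. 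Starting from $(t^{(1)})^2=0$ and propagating the relation $(t^{(s)})^2=t^{(s-1)}t^{(s+1)}$ inductively shows that every $t^{(s)}$ is nilpotent in the quotient, so each component contributes an Artinian factor and $K[t_1,\dots,t_h]/\pi(J_P)$ is Artinian, giving $\dim S/(J_G+L)=0$.

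I expect the monotonicity observation to be the main obstacle: once it is in place the specialized generators acquire the clean near-square form above and the nilpotency is immediate, whereas without it the images $\pi(f_{i_kj_k})$ need not line up into a chain and the zero-dimensionality is not transparent. As a consistency check, Lemma~\ref{remainskoenig} together with Theorem~\ref{juergensomayeh} already guarantees that this same $C$ is a special system of parameters for $S/\ini_<(J_G)$; the purpose of the present argument is precisely to upgrade that conclusion from the initial ideal to $J_G$ itself.
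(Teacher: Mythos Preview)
Your proof is correct and follows essentially the same route as the paper's: both reduce to showing $\dim S/(J_P+L)=0$, split into path components, and in a single component obtain the relations $(t^{(1)})^2$, $(t^{(s)})^2-t^{(s-1)}t^{(s+1)}$, $(t^{(m)})^2$, from which nilpotency of every edge variable follows by the same induction. The one genuine addition in your write-up is the monotonicity observation: because the initial monomials $x_{i_k}y_{j_k}$ are pairwise coprime, an interior vertex of a path component can be neither a local minimum nor a local maximum, so the vertex labels along each component are automatically monotone. The paper achieves the same effect by the sentence ``without loss of generality we may assume $P_1:1,\ldots,a_1,\ldots$'', i.e.\ by relabelling; your argument makes explicit why that relabelling is harmless (indeed unnecessary), and this is exactly the point you flagged as the main obstacle.
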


\begin{proof}
Let $V(G)=[n]$. By Theorem~\ref{lenghtsemipaths}, $G$ has a semi-path  $P$ of length $h=2n-d$,  where $d=\dim S/J_G$.
Let
\[
E(P)=\{\{i_1,j_1\},\ldots,\{i_h, j_h\}:\ i_{\ell}<j_{\ell}\}.
\]
Then $J_G$ is of K\"onig type with respect to $f_{i_1j_1},\ldots,f_{i_hj_h}$.
We show that the sequence $C$ attached to $f_{i_1j_1},\ldots,f_{i_hj_h}$ as in (\ref{goodsequence}) is a system of parameters of $S/J_G$.
Since $|C|=d$, it suffices to show that
$\dim S/(J_G+L)=0$, where $L=(C)$.

Since $J_P\subseteq J_G$ it suffices to show that $\dim S/(J_P+L)=0$.
Let $P_1, \ldots, P_k$ be the components of $P$. Without loss of generalities we may assume $P_1: 1,\ldots,a_1, P_2: a_1+1,\ldots,a_2, \ldots , P_i: a_{i-1}+1,\ldots,a_i, \ldots, P_k: a_{k-1}+1,\ldots, n$, where $a_{i-1}<a_{i}$ for $i=2,\ldots,k$ and $a_k=n$.
Then $J_P+L=\sum_{s=1}^k (J_{P_s}+L_s)$, where
\[
L_s=(x_i-y_j:\ \{i,j\}\in E(P_s), i<j)+(x_{a_s},y_{a_{s-1}+1}).
\]
Note that $J_{P_1}+L_1,\ldots,J_{P_k}+L_k$ are ideals in the polynomial rings with pairwise disjoint sets of variables. This implies that $S/(J_P+L)\cong \bigotimes_{i=1}^k S_i/(J_{P_i}+L_i)$, where $S_i=K[x_j,y_j: j\in V(P_i)]$.
Hence it is enough to show that $\dim S/(J_P+L)=0$, when $P$ is a path and $S=K[V(P)]$.
Let $P:1,\ldots, n$ be a path graph. Then $x_iy_{i+1}-x_{i+1}y_i\in J_P$ for $i=1,\ldots,n-1$ and $L=(x_i-y_{i+1}: 1\leq i\leq n-1)+(x_n,y_1)$.
Then $S/(J_P+L)\cong K[y_1,\ldots,y_n]/H$, where $H=(y_{i+1}^2-y_iy_{i+2}:\ 2\leq i\leq n-2)+(y_2^2,y_n^2)$.
By induction assume that $y_i^r\in H$ for some positive integer $r$. Then $y_{i+1}^{2r}-y_i^ry_{i+2}^r\in H$, because it is divided by $y_{i+1}^2-y_iy_{i+2}$.
Thus $y_{i+1}^{2r}\in H$. Since $y_2^2\in H$, this argument shows that $y_i^{2^{i-1}}\in H$ for all $i$.  So $\dim K[y_1,\ldots,y_n]/H=0$, which completes the proof.
\qed
\end{proof}

As a consequence of Theorems~\ref{independent} and ~\ref{parametersystem} we recover with a different proof  the following result, due to ~\cite{BMS}. In fact by Proposition~\ref{somayehchanged}
an unmixed binomial edge ideal $J_G$ is of K\"{o}nig type if and only if $J_G$ is traceable.

\begin{Corollary}
\label{independentofK}
Let $G$ be graph such that $J_G$ is of K\"{o}nig type. Then the Cohen--Macaulay property of $S/J_G$ does not depend on the base field $K$.
\end{Corollary}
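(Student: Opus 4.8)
The plan is to assemble this directly from the two main ingredients already in hand, Theorem~\ref{parametersystem} and Theorem~\ref{independent}, the first supplying the special system of parameters and the second converting its existence into characteristic independence. There is essentially no new mathematical content to produce; the work is to check that the hypotheses of Theorem~\ref{independent} are met by $S/J_G$.

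First I would observe that $J_G$ is a binomial ideal in the precise sense defined just before Theorem~\ref{independent}: by Lemma~\ref{minimaledge} the generators of $J_G$ are exactly the binomials $f_{ij}=x_iy_j-x_jy_i$ with $\{i,j\}\in E(G)$ and $i<j$, each of which is a difference $u-v$ of two monomials. Thus $J_G$ qualifies as a binomial ideal, and Lemma~\ref{takayuki} (hence Theorem~\ref{independent}) applies to it.

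Next, since by hypothesis $J_G$ is of K\"onig type, Theorem~\ref{parametersystem} guarantees that $R=S/J_G$ admits a special system of parameters, namely the sequence $C$ attached to a semi-path of maximal length as in (\ref{goodsequence}), whose members are either variables or differences of variables. This is exactly the data required by Theorem~\ref{independent}.

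Finally I would invoke Theorem~\ref{independent}: a binomial ideal $I$ whose quotient $S/I$ admits a special system of parameters has the property that the Cohen--Macaulayness of $S/I$ is independent of the base field. Applying this with $I=J_G$ yields the conclusion. I do not anticipate a genuine obstacle here, since both inputs are already established; the only point requiring a word of care is the verification that $J_G$ falls under the paper's definition of a binomial ideal, which is immediate from Lemma~\ref{minimaledge}.
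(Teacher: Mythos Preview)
Your proposal is correct and matches the paper's own approach exactly: the corollary is stated as an immediate consequence of Theorems~\ref{independent} and~\ref{parametersystem}, with no further argument given. Your additional remark that $J_G$ is a binomial ideal in the required sense is a harmless elaboration of what the paper leaves implicit.
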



We recall a few facts from \cite{HHHKR}. Let $G$ be a graph on $[n]$, and let $T\subset [n]$. We denote by $c(T)$ the number of connected components of $G_{[n]\setminus T}$. The set $T$ is called a {\em cut set} of $G$ if $c(T\setminus \{i\})<c(T)$ for all $i\in T$. We consider $T=\emptyset$ also as a cut set of $G$, and denote by $\cut(G)$ the set of all cut sets of $G$.

Let $T\subset [n]$, and let $G_1,\ldots, G_{c(T)}$ be the connected components of $G_{[n]\setminus T}$. We set
\[
P_T=(\Union_{i\in T}\{x_i,y_i\}, J_{\tilde{G}_1},\ldots, J_{\tilde{G}_{c(T)}}),
\]
where $\tilde{G}_i$ denotes the complete graph on the vertex set of $G_i$.  Then  $P_T$ is a minimal prime ideal of $J_G$  if and only if $T$ is a cut set,  and one has
\begin{eqnarray}
\label{cutsect}
J_G=\Sect_{T\in \cut(G)}P_T.
\end{eqnarray}

Suppose that $J_G$ is unmixed and $G$ is traceable. Let $P\subset G$ be path with $V(P)=V(G)$. Since $J_P$ is a complete intersection, it is Cohen--Macaulay and hence unmixed. If $h=\height J_G$, then all minimal prime ideals of $J_P$ and $J_G$ are of height $h$. Hence if $P_T$ is a minimal prime ideal of $J_G$, then  it is also a minimal prime ideal of $J_P$. This shows that
\[
\cut(G)\subseteq \cut(P).
\]

Suppose now that $J_G$ is Cohen--Macaulay. When $G$ is traceable,  the canonical module $\omega_R$ of $R=S/J_G$ can be determined as follows.

\begin{Proposition}
\label{canonical}
Let $G$ be a traceable graph and $P$ be a spanning  path of $G$. Suppose that  $J_G$ is Cohen--Macaulay  and let  $R=S/J_G$.  Then
\[
\omega_R\iso(\Sect_{T\in\cut(P)\setminus \cut(G)}P_T)/J_P.
\]
\end{Proposition}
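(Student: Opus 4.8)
The plan is to reduce the statement to linkage theory, exactly as in the proof of Theorem~\ref{canonical2}(a). Since $P$ is a spanning path of $G$, Lemma~\ref{semi-path} tells us that $J_P$ is a complete intersection, hence Cohen--Macaulay and unmixed, and of course $J_P\subseteq J_G$. On the other hand $J_G$ is Cohen--Macaulay, so it is unmixed, and as recorded in the discussion preceding the proposition all minimal primes of $J_P$ and $J_G$ have the same height $h=\height J_G=\height J_P$. The linkage formula of Peskine--Szpiro (cited as \cite{PS} and used in the proof of Theorem~\ref{canonical2}) then gives
\[
\omega_R\iso (J_P:J_G)/J_P,
\]
so the whole problem reduces to computing the colon ideal $J_P:J_G$.

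To apply Lemma~\ref{colon} I first need $J_P$ to be a radical ideal. This I would get as in the proof of Lemma~\ref{semi-path}: with the lex order the initial terms of the generators $f_{i,i+1}$ of $J_P$ are the monomials $x_iy_{i+1}$, which form a regular sequence; hence $\{f_{i,i+1}\}$ is a Gr\"obner basis and $\ini_<(J_P)$ is a squarefree monomial ideal. By the standard argument used in the corollary following Lemma~\ref{colon} (cf. \cite[Corollary 2.2]{HHHKR}), a binomial ideal whose initial ideal is squarefree is radical. Thus $J_P$ is a complete intersection and a radical ideal with $\height J_P=\height J_G=h$, so Lemma~\ref{colon}(a) applies and yields
\[
J_P:J_G=\Sect_{Q\in\min(J_P)\setminus\min(J_G)}Q.
\]

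The final step is to rewrite the index set $\min(J_P)\setminus\min(J_G)$ in terms of cut sets. By the structure theory recalled from \cite{HHHKR} in (\ref{cutsect}), the minimal primes of a binomial edge ideal are precisely the $P_T$ with $T$ ranging over its cut sets; thus $\min(J_P)=\{P_T:\ T\in\cut(P)\}$ and $\min(J_G)=\{P_T:\ T\in\cut(G)\}$, and the assignment $T\mapsto P_T$ is injective, since $T$ is recovered from $P_T$ as $\{i:\ x_i\in P_T\}$. Combined with the inclusion $\cut(G)\subseteq\cut(P)$ established just before the proposition, this identifies $\min(J_P)\setminus\min(J_G)$ with $\{P_T:\ T\in\cut(P)\setminus\cut(G)\}$, whence
\[
\omega_R\iso (J_P:J_G)/J_P=\Bigl(\Sect_{T\in\cut(P)\setminus\cut(G)}P_T\Bigr)/J_P,
\]
as claimed.

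I expect the main obstacle to be not any single deep step but the careful justification that $J_P$ is radical, so that Lemma~\ref{colon} is genuinely applicable, together with the bookkeeping that distinct cut sets give distinct minimal primes so that the two index sets match exactly. Everything else is a direct assembly of Lemma~\ref{semi-path}, the linkage formula, the cut-set decomposition (\ref{cutsect}), and the inclusion $\cut(G)\subseteq\cut(P)$.
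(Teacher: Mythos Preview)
Your proof is correct and follows essentially the same route as the paper: invoke the Peskine--Szpiro linkage formula for the complete intersection $J_P\subseteq J_G$, then compute $J_P:J_G$ via Lemma~\ref{colon} together with the cut-set decomposition (\ref{cutsect}) and the inclusion $\cut(G)\subseteq\cut(P)$. You supply more detail than the paper does---in particular the justification that $J_P$ is radical and the injectivity of $T\mapsto P_T$---but the underlying argument is identical.
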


\begin{proof}
Since $J_P$ is a complete intersection and a radical ideal with $\height J_P=\height J_G$, we have
\[
\omega_R\iso (J_P:J_G)/J_P.
\]

Thus, by using (\ref{cutsect}) and Lemma~\ref{colon},
\[
\Sect_{T\in\cut(P)}P_T:\Sect_{T\in\cut(G)}P_T=  \Sect_{T\in\cut(P)\setminus \cut(G)}P_T.
\]
\end{proof}




\begin{Example}
{\em
Let $G$ be the complete graph on the vertex set $[n]$ and $P\subset G$ the path $1,2,\ldots, n$. Then $\cut(G) =\emptyset$ and ${i}\subset  [n]$ is a cut set of $P$ if and only $2\leq i\leq n-2$. Hence $\omega_R\iso \Sect_{i=2}^{n-1}(\bar{x}_i,\bar{y}_i)$.
Note that
\[
\Sect_{i=2}^{n-1}(x_i,y_i)=\prod_{i=2}^{n-1}(x_i,y_i)= (\prod_{i\in A}x_i\prod_{i\in [2,n-1]\setminus A}y_i\: \; {A\subseteq [2,n-1]}).
\]
Since $\bar{x}_i\bar{y}_j=\bar{x}_j\bar{y}_i$ for all $j>i$, we finally obtain
\[
\omega_R=(\bar{x}_2\cdots \bar{x}_{n-1}, \ldots,  \bar{x}_2\cdots \bar{x}_{n-2}\bar{y}_{n-1}, \ldots, \bar{x}_2\cdots \bar{x}_j \bar{y}_{j+1}\cdots \bar{y}_{n-1}, \ldots, \bar{y}_2\cdots \bar{y}_{n-1}).
\]
}
\end{Example}

\section{Hibi ideals of K\"onig type}\label{four}

We refer the reader to \cite[pp.~156--159]{HH} for fundamental materials on finite partially ordered sets and finite lattices.  A partially ordered set will be called a poset.

Let $P = \{x_1, \ldots, x_n\}$ be a finite poset with $|P| = n$.  The {\em rank} of each $x \in P$, written as $\rank(x)$, is the maximal length of chains of $P$ of the form
\[
x = x_{i_0} > x_{i_1} > \cdots > x_{i_a}.
\]
Let $\rank(P)$ denote the rank of $P$, which is the maximal length of chains of $P$.  Let $d = \rank(P)$ and write $L_i$ for the set of those $x \in P$ with $\rank(x) = i$, where $0 \leq i \leq d$.

The {\em incomparability graph} of $P$ is the finite simple graph ${\rm incom}(P)$ on $[n] = \{1, \ldots, n\}$ whose edges are those $\{i, j\}$ for which $p_i$ and $p_j$ are incomparable in $P$.  For example,

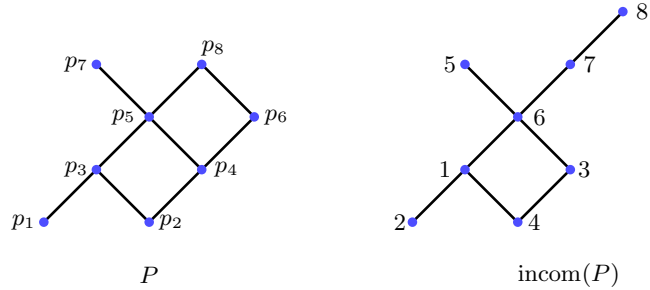
\begin{figure}[hbt]
\begin{center}
\psset{unit=0.7cm}
\begin{pspicture*}(10.9986,3.03)(23.9386,9.5)
\psline[linewidth=1.pt](13.,6.)(12.,5.)
\psline[linewidth=1.pt](14.,5.)(13.,6.)
\psline[linewidth=1.pt](14.,5.)(15.,6.)
\psline[linewidth=1.pt](14.,7.)(13.,6.)
\psline[linewidth=1.pt](15.,6.)(14.,7.)
\psline[linewidth=1.pt](15.,6.)(16.,7.)
\psline[linewidth=1.pt](14.,7.)(15.,8.)
\psline[linewidth=1.pt](16.,7.)(15.,8.)
\psline[linewidth=1.pt](14.,7.)(13.,8.)
\psline[linewidth=1.pt](21.,5.)(22.,6.)
\psline[linewidth=1.pt](22.,6.)(21.,7.)
\psline[linewidth=1.pt](21.,5.)(20.,6.)
\psline[linewidth=1.pt](21.,7.)(20.,6.)
\psline[linewidth=1.pt](22.,8.)(21.,7.)
\psline[linewidth=1.pt](23.,9.)(22.,8.)
\psline[linewidth=1.pt](21.,7.)(20.,8.)
\psline[linewidth=1.pt](20.,6.)(19.,5.)
\begin{scriptsize}
\psdots[dotstyle=*,linecolor=ududff](14.,5.)
\rput[ll] (14.2, 5) {$p_2$}
\psdots[dotstyle=*,linecolor=ududff](13.,6.)
\rput[ll] (12.4, 6) {$p_3$}
\psdots[dotstyle=*,linecolor=ududff](15.,6.)
\rput[ll] (15.25, 6) {$p_4$}
\psdots[dotstyle=*,linecolor=ududff](14.,7.)
\rput[ll] (13.3, 7) {$p_5$}
\psdots[dotstyle=*,linecolor=ududff](16.,7.)
\rput[ll] (16.2, 7) {$p_6$}
\psdots[dotstyle=*,linecolor=ududff](15.,8.)
\rput[ll] (15, 8.3) {$p_8$}
\psdots[dotstyle=*,linecolor=ududff](13.,8.)
\rput[ll] (12.4, 8) {$p_7$}
\psdots[dotstyle=*,linecolor=ududff](12.,5.)
\rput[ll] (11.4, 5) {$p_1$}
\psdots[dotstyle=*,linecolor=ududff](21.,5.)
\rput[ll] (21.2, 5) {$4$}
\psdots[dotstyle=*,linecolor=ududff](20.,6.)
\rput[ll] (19.5, 6) {$1$}
\psdots[dotstyle=*,linecolor=ududff](22.,6.)
\rput[ll] (22.15, 6) {$3$}
\psdots[dotstyle=*,linecolor=ududff](21.,7.)
\rput[ll] (21.3, 7) {$6$}
\psdots[dotstyle=*,linecolor=ududff](22.,8.)
\rput[ll] (22.25, 8) {$7$}
\psdots[dotstyle=*,linecolor=ududff](23.,9.)
\rput[ll] (23.25, 9) {$8$}
\psdots[dotstyle=*,linecolor=ududff](20.,8.)
\rput[ll] (19.6, 8) {$5$}
\psdots[dotstyle=*,linecolor=ududff](19.,5.)
\rput[ll] (18.65, 5) {$2$}
\rput[14] (14, 4) {$P$}
\rput[l4] (21, 4) {${\rm incom}(P)$}
\end{scriptsize}
\end{pspicture*}

\end{center}
\caption{A finite poset and its incomparability graph}
\label{pininside}
\end{figure}


\begin{Lemma}
\label{vertexcovernumber}
Let $P$ be a finite poset of $\rank(P) = d$ and $|P| = n$.  Then
\[
\tau({\rm incom}(P)) = n - (d + 1).
\]
\end{Lemma}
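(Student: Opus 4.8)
The plan is to exploit the standard complementarity between vertex covers and independent sets, together with the identification of independent sets of ${\rm incom}(P)$ with chains of $P$.

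First I would record the elementary graph-theoretic fact that for any finite simple graph $G$ on a vertex set $V$, a subset $C\subseteq V$ is a vertex cover if and only if its complement $V\setminus C$ contains no edge, i.e.\ is an independent set. Consequently a vertex cover of minimum cardinality is exactly the complement of an independent set of maximum cardinality, and therefore
\[
\tau(G)=|V|-\beta(G),
\]
where $\beta(G)$ denotes the maximum cardinality of an independent set of $G$. (I would use the symbol $\beta$, and define it on the spot, precisely because the paper already employs $\alpha(G)$ with an unrelated meaning in Theorem~\ref{characterizekoenig}.)

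Next I would interpret the independent sets of ${\rm incom}(P)$ in poset-theoretic terms. By definition an edge of ${\rm incom}(P)$ joins two incomparable elements of $P$; hence a subset $W\subseteq P$ is independent in ${\rm incom}(P)$ if and only if no two elements of $W$ are incomparable, that is, if and only if $W$ is a chain of $P$. Thus $\beta({\rm incom}(P))$ equals the largest number of elements occurring in a chain of $P$. Since $\rank(P)=d$ means that the longest chain of $P$ has length $d$, and a chain of length $d$ of the form $x_{i_0}>x_{i_1}>\cdots>x_{i_d}$ has exactly $d+1$ elements, I obtain $\beta({\rm incom}(P))=d+1$.

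Combining the two steps gives $\tau({\rm incom}(P))=n-\beta({\rm incom}(P))=n-(d+1)$, which is the desired formula. I do not expect a genuine obstacle: the argument is purely combinatorial and reduces to two well-known dictionary entries (vertex covers versus independent sets, and independent sets of an incomparability graph versus chains). The only point demanding care is the off-by-one bookkeeping between the \emph{length} of a chain (its number of covering steps, namely $d$) and its \emph{number of elements} ($d+1$), together with the deliberate avoidance of the already-overloaded notation $\alpha(G)$.
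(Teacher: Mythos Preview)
Your proof is correct. It differs from the paper's in how the lower bound on $\tau({\rm incom}(P))$ is obtained: the paper argues via the rank stratification $P=\bigcup_i L_i$, observing that each $L_i$ induces a clique in ${\rm incom}(P)$ so that any vertex cover must contain at least $|L_i|-1$ vertices from $L_i$, and then sums over $i$. You instead invoke the complementarity $\tau(G)=|V|-\beta(G)$ and identify $\beta({\rm incom}(P))$ directly with the maximum chain size $d+1$, which delivers both inequalities at once. Your route is cleaner and more self-contained; the paper's stratified argument has the incidental advantage that the rank levels $L_i$ are reused in the proofs of Lemma~\ref{dilworth} and Theorem~\ref{incimparability}, so introducing them here is not wasted effort in context.
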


\begin{proof}
Since the induced subgraph of ${\rm incom}(P)$ on $L_i$ is the complete graph on $L_i$, each vertex cover $C$ of ${\rm incom}(P)$ satisfies $|C \cap L_i| \geq |L_i| - 1$.  It then follows that
\[
|C| \geq \sum_{i=0}^{d} (|L_i| - 1) = n - (d + 1).
\]
On the other hand, if
\[
x_{i_0} < x_{i_1} < \cdots < x_{i_d}
\]
is a maximal chain of $P$ of length $d$, then $$L \setminus \{x_{i_0}, x_{i_1}, \ldots, x_{i_d}\}$$ is a vertex cover of ${\rm incom}(P)$.  Hence $\tau({\rm incom}(P)) = n - (d + 1)$, as desired.
\end{proof}

A subset $A$ of a finite poset $P$ is called an {\em antichain} of $P$ if every two distinct elements of $A$ are incomparable in $P$.  A finite poset $P$ is called {\em thin} if $P$ possesses no antichain $A$ with $|A| \geq 3$.  We say that a finite poset $P$ of $\rank(P) = d$ is {\em pure} if every maximal chain of $P$ is of length $d$.

\begin{Lemma}
\label{dilworth}
Let $P$ be a finite pure poset.  Then the following conditions are equivalent:
\begin{itemize}
\item[(i)]
$P$ is thin.
\item[(ii)]
each $|L_i|$ satisfies $|L_i| \leq 2$.
\item[(iii)]
$P$ is the sum of at most two disjoint chains.
\item[(iv)]
${\rm incom}(P)$ is bipartite.
\end{itemize}
\end{Lemma}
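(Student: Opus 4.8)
The plan is to prove the cyclic chain of implications (ii) $\Rightarrow$ (i) $\Rightarrow$ (iii) $\Rightarrow$ (iv) $\Rightarrow$ (ii), arranged so that the purity hypothesis is invoked only once, in the single nontrivial step (ii) $\Rightarrow$ (i). Throughout I would use two elementary remarks: first, each level $L_i$ is an antichain, since $x<y$ forces $\rank(x)<\rank(y)$; second, a subset of $P$ is a chain precisely when it is an independent set of ${\rm incom}(P)$, because the edges of ${\rm incom}(P)$ are exactly the incomparable pairs. In particular a chain meets an antichain in at most one element.

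For the three routine sides of the cycle: condition (i) says $P$ has no antichain of size $\geq 3$, that is, its width is at most $2$; by Dilworth's theorem $P$ is then a union of at most two chains, which is (iii). Given a partition $P=C_1\cup C_2$ into two chains as in (iii), each $C_k$ is an independent set of ${\rm incom}(P)$, so assigning two colours to $C_1$ and $C_2$ is a proper $2$-colouring and (iv) holds. Conversely, a proper $2$-colouring of ${\rm incom}(P)$ partitions $P$ into two independent sets, i.e. two chains $V_1,V_2$; since each $V_k$ meets the antichain $L_i$ in at most one vertex, $|L_i|=|L_i\cap V_1|+|L_i\cap V_2|\leq 2$, which is (ii). None of these steps uses purity.

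The heart of the matter is (ii) $\Rightarrow$ (i), and this is the step where I expect the main difficulty, as it is the only place purity enters. The key preliminary observation is that in a pure poset a saturated chain $c_0\lessdot c_1\lessdot\cdots\lessdot c_m$ from a minimal to a maximal element satisfies $\rank(c_k)=k$ and $m=d$: indeed $\rank(c_k)\geq k$ always, and if some $\rank(c_k)\geq k+1$ one could prepend a longest descending chain from $c_k$ (which is saturated and reaches a minimal element) to the tail $c_k\lessdot\cdots\lessdot c_d$, producing a maximal chain of length $>d$ and contradicting purity. Consequently every $x$ with $\rank(x)=i$ lies on such a chain, so for each $j$ with $i\leq j\leq d$ there is an element $\geq x$ of rank $j$, and for each $j$ with $0\leq j\leq i$ an element $\leq x$ of rank $j$. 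Now suppose (ii) holds but (i) fails, and choose an antichain $\{a,b,c\}$ with $\rank(a)\leq\rank(b)\leq\rank(c)$; set $r=\rank(b)$. Projecting $a$ upward and $c$ downward to level $r$ yields $a'\geq a$ and $c'\leq c$ with $a',c'\in L_r$. Then $a',b,c'\in L_r$ and $|L_r|\leq 2$ force two of them to coincide; but $a'=b$ gives $a\leq b$, $c'=b$ gives $b\leq c$, and $a'=c'$ gives $a\leq a'=c'\leq c$, each contradicting that $\{a,b,c\}$ is an antichain. Hence $P$ has no antichain of size $3$ and is thin, closing the cycle.
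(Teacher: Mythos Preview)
Your proof is correct and follows essentially the same strategy as the paper: the equivalences (i)$\Leftrightarrow$(iii) via Dilworth and (iii)$\Leftrightarrow$(iv) from the definition of ${\rm incom}(P)$ match the paper exactly, and your key step (ii)$\Rightarrow$(i) uses the same purity-based level argument, only packaged as a uniform ``project $a$ up and $c$ down to level $\rank(b)$ and apply pigeonhole'' rather than the paper's case split on whether two of the three ranks coincide. This is a minor presentational difference, not a different route.
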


\begin{proof}
The equivalence (i) $\Leftrightarrow$ (iii) is just a decomposition theorem of Dilworth \cite{Dilworth}.  Furthermore, (iii) $\Rightarrow$ (ii) is clear.  On the other hand, in order to show (ii) $\Rightarrow$ (i), suppose that each $|L_i| \leq 2$ and that $\{ a, b, c \}$ is an antichain of $P$.  Since $P$ is pure, if $\rank(a) = \rank(b)$ and, say, $\rank(c) < \rank(a)$, then either $c < a$ or $c < b$, a contradiction.  If $\rank(a) < \rank(b) < \rank(c)$, then there is $b' \in P$ with $\rank(b') = \rank(b)$ and $a < b' < c$.  Hence $a < c$, a contradiction.  As a result, no three-element subset of $P$ can be an antichain of $P$.  Finally, the equivalence (iii) $\Leftrightarrow$ (iv) follows directly from the definition of ${\rm incom}(P)$.
\end{proof}

The finite nonpure poset $P$ of $\rank(P) = d$ drawn below satisfies each $|L_i| \leq 2$.  However, there is an antichain $A$ of $P$ with $|A| = d + 1$.

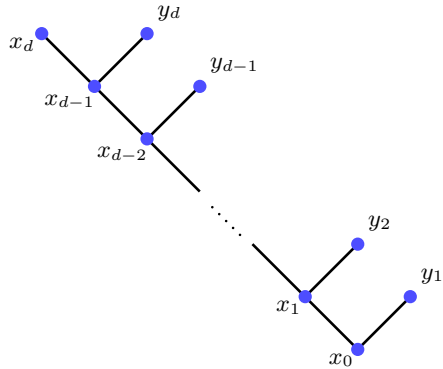
\begin{figure}[hbt]
\begin{center}
\newrgbcolor{ududff}{0.30196078431372547 0.30196078431372547 1.}
\psset{xunit=0.7cm,yunit=0.7cm,algebraic=true,dimen=middle,dotstyle=o,dotsize=5pt 0,linewidth=2.pt,arrowsize=3pt 2,arrowinset=0.25}
\begin{pspicture*}(12.86,2.5)(22.4,11.76)
\psline[linewidth=1.pt](16.,8.)(15.,9.)
\psline[linewidth=1.pt](19.,5.)(20.,6.)
\psline[linewidth=1.pt](16.,8.)(17.,9.)
\psline[linewidth=1.pt](15.,9.)(16.,10.)
\psline[linewidth=1.pt](15.,9.)(14.,10.)
\psline[linewidth=1.pt](21.,5.)(20.,4.)
\psline[linewidth=1.pt](19.,5.)(20.,4.)
\psline[linewidth=1.pt](16.,8.)(17.,7.)
\psline[linewidth=1.pt](19.,5.)(18.,6.)
\psline[linewidth=1.pt,linestyle=dotted](17.22,6.72)(17.78,6.16)
\begin{scriptsize}
\psdots[dotstyle=*,linecolor=ududff](19.,5.)
\rput[ll] (18.45, 4.8) {$x_1$}
\psdots[dotstyle=*,linecolor=ududff](16.,8.)
\rput[ll] (15.05, 7.7) {$x_{d-2}$}
\psdots[dotstyle=*,linecolor=ududff](15.,9.)
\rput[ll] (14.05, 8.7) {$x_{d-1}$}
\psdots[dotstyle=*,linecolor=ududff](21.,5.)
\rput[ll] (21.2, 5.4) {$y_1$}
\psdots[dotstyle=*,linecolor=ududff](20.,6.)
\rput[ll] (20.2, 6.4) {$y_2$}
\psdots[dotstyle=*,linecolor=ududff](17.,9.)
\rput[ll] (17.2, 9.4) {$y_{d-1}$}
\psdots[dotstyle=*,linecolor=ududff](16.,10.)
\rput[ll] (16.2, 10.4) {$y_d$}
\psdots[dotstyle=*,linecolor=ududff](14.,10.)
\rput[ll] (13.38, 9.8) {$x_d$}
\psdots[dotstyle=*,linecolor=ududff](20.,4.)
\rput[ll] (19.45, 3.8) {$x_0$}
\end{scriptsize}
\end{pspicture*}

\end{center}
\caption{A nonpure poset and an antichain}
\label{pininside}
\end{figure}


We now come to a characterization for ${\rm incom}(P)$ of a finite pure poset to be K\"onig.

\begin{Theorem}
\label{incimparability}
The incomparability graph ${\rm incom}(P)$ of a finite pure poset $P$ is K\"onig if and only if ${\rm incom}(P)$ is bipartite.
\end{Theorem}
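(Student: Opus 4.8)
The plan is to prove the two implications separately, the forward one being immediate and the converse carrying all the weight. For ``bipartite $\Rightarrow$ K\"onig'' nothing new is required: a bipartite graph satisfies $m(G)=\tau(G)$ by the theorem of K\"onig recalled in the introduction, so ${\rm incom}(P)$ is K\"onig as soon as it is bipartite. For the converse, ``${\rm incom}(P)$ K\"onig $\Rightarrow$ bipartite'', I would argue by strong induction on $n=|P|$, showing that K\"onigness forces $|L_i|\le 2$ for every $i$; the conclusion then follows from the equivalence (ii)$\Leftrightarrow$(iv) of Lemma~\ref{dilworth}.

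The engine of the induction is a rigidity observation: if some level is a singleton, say $L_j=\{v\}$, then $v$ is comparable to every other element of $P$. Indeed every element lies on a maximal chain, which, since $P$ is pure, is saturated and meets each rank exactly once, and hence passes through the unique rank-$j$ element $v$. Consequently $v$ is an isolated vertex of ${\rm incom}(P)$; and since each element below $v$ lies below each element above $v$, the poset decomposes as $P_{<v}\sqcup\{v\}\sqcup P_{>v}$ with ${\rm incom}(P)={\rm incom}(P_{<v})\sqcup\{v\}\sqcup {\rm incom}(P_{>v})$ a disjoint union of graphs, where $P_{<v}$ and $P_{>v}$ are again pure posets of strictly smaller cardinality.

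With this in hand the induction runs as follows. If some $|L_j|=1$, then ${\rm incom}(P)$ is the disjoint union of two strictly smaller incomparability graphs together with an isolated point; since $m$ and $\tau$ are additive over connected components and $m\le\tau$ holds on each piece, K\"onigness of ${\rm incom}(P)$ forces K\"onigness of each piece. The inductive hypothesis then makes each piece bipartite, and a disjoint union of bipartite graphs with an isolated point is bipartite. If instead every level has $|L_i|\ge 2$, then (all $d+1$ levels being nonempty by purity) $n=\sum_{i=0}^{d}|L_i|\ge 2(d+1)$; on the other hand, K\"onigness together with Lemma~\ref{vertexcovernumber} gives $n-(d+1)=\tau({\rm incom}(P))=m({\rm incom}(P))\le \lfloor n/2\rfloor$, whence $n\le 2(d+1)$. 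Thus $n=2(d+1)$ and every $|L_i|=2$, so ${\rm incom}(P)$ is bipartite by Lemma~\ref{dilworth}.

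The step I expect to be most delicate is the reduction in the singleton case: one must verify carefully that $P_{<v}$ and $P_{>v}$ are genuinely pure (so that the inductive hypothesis applies) and that no edge of ${\rm incom}(P)$ joins the two pieces. This is exactly where purity is used twice, first to force $v$ onto every maximal chain, and then to see that covers of $P_{<v}$ and of $P_{>v}$ are inherited from $P$, so each piece is graded of the expected rank. It is worth stressing why the easy bound $n\le 2(d+1)$ alone does \emph{not} finish the proof: a single large level can be compensated by singleton levels without violating it, so the crude counting only settles the case of no singleton levels. It is precisely the isolated-vertex splitting that strips off the singleton levels and reduces every configuration to that clean case.
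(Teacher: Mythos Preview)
Your proof is correct, but it takes a different route from the paper's. Both arguments share the key observation that a singleton level $L_j=\{v\}$ produces an isolated vertex of ${\rm incom}(P)$, but they exploit it differently. The paper does \emph{not} induct: it lets $q$ be the number of singleton levels, notes that the $q$ isolated vertices cannot occur in any matching, and in one stroke obtains
\[
n-q \;\ge\; 2\,m({\rm incom}(P)) \;=\; 2\,\tau({\rm incom}(P)) \;=\; 2\bigl(n-(d+1)\bigr).
\]
Combined with the trivial lower bound $n-q=\sum_{|L_i|>1}|L_i|\ge 2(d+1-q)$, this squeezes every non-singleton level to size exactly $2$, and Lemma~\ref{dilworth} finishes. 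So the paper's argument is a single global count that handles all configurations at once, whereas you peel off singleton levels one at a time by induction and then run essentially the same count in the residual case $q=0$. The paper's approach is shorter and sidesteps the verification that $P_{<v}$ and $P_{>v}$ remain pure (the point you yourself flagged as most delicate); your approach, by contrast, makes the structural decomposition of $P$ explicit and transparent, which is a perfectly good trade.
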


\begin{proof}
Every bipartite graph is K\"onig.  Now, suppose that ${\rm incom}(P)$ is K\"onig.  Let $q$ denote the number of those $L_i$ with $|L_i| = 1$.  Since $P$ is pure, if $|L_i| = 1$ and $\rank(a) = i$, then $a$ is comparable to every element of $P$.  Thus
\[
n - q = \sum_{|L_i| > 1} |L_i| \geq 2 \cdot m({\rm incom}(P)) = 2 \cdot \tau({\rm incom}(L)) = 2(n - (d + 1)).
\]
Hence
\[
\sum_{|L_i| > 1} |L_i| \geq 2 \big(\big(\sum_{|L_i| > 1} |L_i|) + q\big) - (d + 1)\big).
\]
It then follows that
\[
\sum_{|L_i| > 1} |L_i| \leq 2(d - q + 1).
\]
Since the number of those $0 \leq i \leq d$ with $|L_i| > 1$ is $d - q + 1$, it follows that
\[
\sum_{|L_i| > 1} |L_i| \geq 2(d - q + 1).
\]
Hence
\[
\sum_{|L_i| > 1} |L_i| = 2(d - q + 1).
\]
Thus $|L_i| = 2$ if $|L_i| > 1$.  It then follows from Lemma \ref{dilworth} that ${\rm incom}(P)$ is bipartite, as desired.
\end{proof}

We now turn to the study of the Hibi ring \cite{Hibi} of a finite distributive lattice.  Let $P = \{p_1, \ldots, p_d\}$ be a finite poset.  A subset $\alpha \subset P$ is called a {\em poset ideal} of $P$ if $\alpha$ enjoys the property that if $p_i \in \alpha, p_j \in P$ together with $p_j \leq p_i$, then $p_j \in \alpha$.  Let $\Jc(P)$ denote the set of poset ideals of $P$.  It then follows that $\Jc(P)$ is a finite poset, in fact, finite lattice, ordered by inclusion.  Furthermore, the finite lattice $\Jc(P)$ is distributive.  On the other hand, Birkhoff's fundamental structure theorem \cite[Theorem 9.1.7]{HH} guarantees that, given a finite distributive lattice $L$, there is a unique finite poset $P$ with $L = \Jc(P)$.  Every finite distributive lattice is pure.

Let $T = K[z_1, \ldots, z_d, t]$ denote the polynomial ring in $d + 1$ variables over a field $K$.  Given a poset ideal $\alpha$ of $P$, we associate the squarefree monomial
\[
u_\alpha = (\prod_{p_i \in \alpha} z_i) t \in T.
\]
In particular $u_\emptyset = t$ and $u_P = z_1 \cdots z_d t$.  In \cite{Hibi}, the toric ring $\Rc_K[\Jc(P)]$ which is generated by those monomials $u_\alpha$ with $\alpha \in \Jc(P)$ is introduced:
\[
\Rc_K[\Jc(P)] = K[\{u_\alpha\}_{\alpha \in \Jc(P)}].
\]
Let $S = K[\{x_\alpha\}_{\alpha \in \Jc(P)}]$ denote the polynomial ring in $|\Jc(P)|$ variables over $K$.  We introduce a surjective ring homomorphism $\pi : S \to \Rc_K[\Jc(P)]$ by setting
\[
\pi(x_\alpha) = u_{\alpha}.
\]
It turns out that the toric ideal $I_{\Jc(P)} \subset S$ of $\Rc_K[\Jc(P)]$ which is the kernel of $\pi$ is
\[
I_{\Jc(P)} = (x_{\alpha_i}x_{\alpha_j} - x_{\alpha_i \cap \alpha_j}x_{\alpha_i \cup \alpha_j} \, : \, 1 \leq i < j \leq n).
\]

Nowadays, the toric ring $\Rc_K[L]$ with $L = \Jc(P)$ is called the {\em Hibi ring} of the finite distributive lattice $L$.  One can ask when the toric ideal $I_L$ of $\Rc_K[L]$ is of K\"onig type.

Let $L = \Jc(P) = \{x_1, \ldots, x_n\}$ with $n = |\Jc(P)|$ and $S = K[x_1, \ldots, x_n]$.  We discuss a reverse lexicographic order $<_{\rm rev}$ on $S$ with the property that $x_i <_{\rm rev} x_j$ if $x_i < x_j$ in $L$.  A basic fact on $<_{\rm rev}$ is

\begin{Lemma}[\cite{Hibi}]
\label{revlex}
The set of those binomials $$f_{i, j} = x_ix_j - (x_i \wedge x_j)(x_i \vee x_j)$$ for which $x_i$ and $x_j$ are incomparable in $L$ is the reduced Gr\"obner basis of $I_L$ with respect to $<_{\rm rev}$.
\end{Lemma}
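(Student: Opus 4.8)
The plan is to establish, in turn, the three conditions defining a reduced Gr\"obner basis: that each $f_{i,j}$ lies in $I_L$, that its $<_{\rm rev}$-leading term is $x_ix_j$, and that these binomials form a Gr\"obner basis which is moreover reduced.

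\emph{Membership and leading terms.} Writing each lattice element $x_i\in L=\Jc(P)$ as the poset ideal it names, one has for every $p\in P$ the elementary identity $[p\in x_i]+[p\in x_j]=[p\in x_i\wedge x_j]+[p\in x_i\vee x_j]$, where $\wedge$ and $\vee$ are intersection and union of poset ideals. Multiplying the corresponding monomials $u_\alpha=t\prod_{p\in\alpha}z_p$ gives $u_{x_i}u_{x_j}=u_{x_i\wedge x_j}u_{x_i\vee x_j}$, so $\pi(f_{i,j})=0$ and $f_{i,j}\in I_L$. For the leading term, incomparability of $x_i,x_j$ forces $x_i\wedge x_j<x_i,x_j<x_i\vee x_j$ strictly in $L$, so the four variables are distinct and $x_i\wedge x_j$ is $<_{\rm rev}$-smallest among them. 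Since $<_{\rm rev}$ is built from a linear extension of $L$, this smallest variable carries the highest index, and in the difference of the exponent vectors of $x_ix_j$ and $(x_i\wedge x_j)(x_i\vee x_j)$ the highest-indexed nonzero entry is the $-1$ at $x_i\wedge x_j$; the reverse-lexicographic rule then yields $x_ix_j>_{\rm rev}(x_i\wedge x_j)(x_i\vee x_j)$, i.e. $\ini_{<_{\rm rev}}(f_{i,j})=x_ix_j$.

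\emph{Gr\"obner property.} Put $J=(x_ix_j:\ x_i,x_j\text{ incomparable in }L)$; by the previous step $J\subseteq\ini_{<_{\rm rev}}(I_L)$, and since $S/I_L$ and $S/\ini_{<_{\rm rev}}(I_L)$ share a Hilbert function, it suffices to prove $\dim_K(S/J)_m\le\dim_K(S/I_L)_m$ for all $m$. The monomials outside $J$ are exactly those supported on a chain of $L$, so $\dim_K(S/J)_m$ counts the multichains $\alpha_1\le\cdots\le\alpha_m$ of length $m$. On the other side $S/I_L\cong\Rc_K[L]$, and the relations $u_\alpha u_\beta=u_{\alpha\cap\beta}u_{\alpha\cup\beta}$ let one sort any degree-$m$ monomial in the $u_\alpha$ into chain form, so the multichains of length $m$ span $(\Rc_K[L])_m$; their images $t^m\prod_p z_p^{c_p}$ in $T$ are pairwise distinct, hence they form a basis. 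Thus both Hilbert functions equal the number of length-$m$ multichains, forcing $J=\ini_{<_{\rm rev}}(I_L)$. Hence the $f_{i,j}$ are a Gr\"obner basis, and in particular they generate $I_L$.

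\emph{Reducedness and the obstacle.} Reducedness is now immediate: every $f_{i,j}$ has leading coefficient $1$; the leading monomials $x_ix_j$ are squarefree of degree two and none divides another; and the trailing monomial $(x_i\wedge x_j)(x_i\vee x_j)$ is supported on the chain $\{x_i\wedge x_j,x_i\vee x_j\}$, so no leading monomial $x_kx_l$ with $x_k,x_l$ incomparable can divide it. The crux of the whole argument is the spanning claim used in the Hilbert comparison, namely that iterating $u_\alpha u_\beta\mapsto u_{\alpha\cap\beta}u_{\alpha\cup\beta}$ always terminates in a unique multichain; this well-foundedness is the technical heart of Hibi's theorem and is where I expect the real work to lie. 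A more computational alternative is to verify Buchberger's criterion directly: S-pairs with coprime leading terms reduce to zero automatically, leaving only the overlap case with leading terms $x_ix_j$ and $x_jx_l$, whose reduction again comes down to the distributive-lattice identities among $x_i,x_j,x_l$ and their meets and joins.
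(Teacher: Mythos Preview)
The paper does not prove this lemma at all; it is stated with a citation to \cite{Hibi} and used as a black box. So there is no ``paper's proof'' to compare against, and your write-up is in fact a self-contained reconstruction of the classical argument.

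Your proof is correct, but you have made it harder on yourself than necessary. In the Hilbert-function comparison you argue that multichain monomials both span $(\Rc_K[L])_m$ (via straightening) and are linearly independent (distinct images in $T$), and you flag the spanning step as the ``crux'' where termination of the sorting process must be checked. In fact the spanning step is not needed. From $J\subseteq\ini_{<_{\rm rev}}(I_L)$ you already have $\dim_K(S/J)_m\ge\dim_K(S/I_L)_m$, so you only need the reverse inequality. Since $\dim_K(S/J)_m$ equals the number of length-$m$ multichains, and since the distinct-images argument shows that these multichains give linearly independent elements of $(\Rc_K[L])_m\cong(S/I_L)_m$, you immediately get $\dim_K(S/I_L)_m\ge\dim_K(S/J)_m$. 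Equality follows, hence $J=\ini_{<_{\rm rev}}(I_L)$, and the straightening/termination issue never arises. With this observation your argument is complete and clean; the Buchberger alternative you mention at the end is also viable but unnecessary.
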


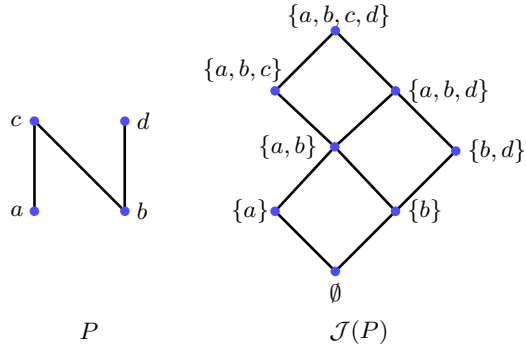
\begin{figure}[hbt]
\label{simpleexample}
\begin{center}
\psset{unit=0.4cm}
\begin{pspicture*}(2.91763636363636,-1)(22.73636363636358,11.884545454545451)
\psline[linewidth=1.pt](6.,4.)(6.,7.)
\psline[linewidth=1.pt](9.,4.)(9.,7.)
\psline[linewidth=1.pt](9.,4.)(6.,7.)
\psline[linewidth=1.pt](16.,2.)(18.,4.)
\psline[linewidth=1.pt](14.,4.)(15.98,6.14)
\psline[linewidth=1.pt](18.,4.)(15.98,6.14)
\psline[linewidth=1.pt](16.,2.)(14.,4.)
\psline[linewidth=1.pt](18.,4.)(20.,6.)
\psline[linewidth=1.pt](15.98,6.14)(18.,8.)
\psline[linewidth=1.pt](20.,6.)(18.,8.)
\psline[linewidth=1.pt](15.98,6.14)(14.,8.)
\psline[linewidth=1.pt](18.,8.)(16.,10.)
\psline[linewidth=1.pt](14.,8.)(16.,10.)
\begin{scriptsize}
\psdots[dotstyle=*,linecolor=ududff](6.,4.)
\rput[ll] (5.2, 4) {$a$}
\psdots[dotstyle=*,linecolor=ududff](6.,7.)
\rput[ll] (5.2, 7) {$c$}
\psdots[dotstyle=*,linecolor=ududff](9.,4.)
\rput[ll] (9.4, 4) {$b$}
\psdots[dotstyle=*,linecolor=ududff](9.,7.)
\rput[ll] (9.4, 7) {$d$}
\psdots[dotstyle=*,linecolor=ududff](16.,2.)
\rput[ll] (15.8, 1.3) {$\emptyset$}
\psdots[dotstyle=*,linecolor=ududff](18.,4.)
\rput[ll] (18.4, 4) {$\{b\}$}
\psdots[dotstyle=*,linecolor=ududff](14.,4.)
\rput[ll] (12.55, 4) {$\{a\}$}
\psdots[dotstyle=*,linecolor=ududff](15.98,6.14)
\rput[ll] (13.45, 6.14) {$\{a,b\}$}
\psdots[dotstyle=*,linecolor=ududff](20.,6.)
\rput[ll] (20.4, 6) {$\{b,d\}$}
\psdots[dotstyle=*,linecolor=ududff](18.,8.)
\rput[ll] (18.4, 8) {$\{a,b,d\}$}
\psdots[dotstyle=*,linecolor=ududff](14.,8.)
\rput[ll] (11.6, 8.7) {$\{a,b,c\}$}
\psdots[dotstyle=*,linecolor=ududff](16.,10.)
\rput[ll] (14.4, 10.5) {$\{a,b,c,d\}$}
\rput[ll] (7.5, 0) {$P$}
\rput[ll] (15.8, 0) {$\mathcal{J}(P)$}
\end{scriptsize}
\end{pspicture*}

\end{center}
\caption{A poset and its poset ideal lattice}
\label{pininside}
\label{simpleexample}
\end{figure}

We now come to a characterization for the toric ideal $I_L$ to be of K\"onig type.

\begin{Theorem}
\label{binomialhibi}
Let $L$ be a finite distributive lattice and $I_L$ the toric ideal of the Hibi ring $\Rc_K[L]$ of $L$.  Then the following conditions are equivalent:
\begin{itemize}
\item[(i)]
$I_L$ is of K\"onig type with respect to $f_{i, j}$ and $<_{\rm rev}$.
\item[(ii)]
$L$ is thin.
\item[(iii)]
${\rm incom(L)}$ is bipartite.
\end{itemize}
\end{Theorem}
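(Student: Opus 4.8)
The plan is to prove (ii) $\iff$ (iii) directly from Lemma~\ref{dilworth}, and to prove (i) $\iff$ (iii) by translating the K\"onig type condition for $I_L$ into the K\"onig property of the graph ${\rm incom}(L)$, to which Theorem~\ref{incimparability} applies. The equivalence (ii) $\iff$ (iii) is immediate: since every finite distributive lattice is pure, $L$ is itself a finite pure poset, so the equivalence (i) $\iff$ (iv) of Lemma~\ref{dilworth} applied to $L$ states precisely that $L$ is thin if and only if ${\rm incom}(L)$ is bipartite.

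For (i) $\iff$ (iii) I would first identify the initial ideal. By Lemma~\ref{revlex} the binomials $f_{i,j}$ with $x_i, x_j$ incomparable form the reduced Gr\"obner basis of $I_L$ with respect to $<_{\rm rev}$, and $\ini_{<_{\rm rev}}(f_{i,j}) = x_ix_j$. Hence $\ini_{<_{\rm rev}}(I_L)$ is exactly the edge ideal $I({\rm incom}(L))$, minimally generated by the edge monomials $x_ix_j$, one per edge $\{i,j\}$ of ${\rm incom}(L)$. Since $\dim S/I_L = \dim S/\ini_{<_{\rm rev}}(I_L)$, this gives $h := \height I_L = \height I({\rm incom}(L)) = \tau({\rm incom}(L))$. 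A set of edge monomials $x_{i_1}x_{j_1},\ldots,x_{i_h}x_{j_h}$ is a regular sequence if and only if they are pairwise coprime, i.e.\ the edges $\{i_1,j_1\},\ldots,\{i_h,j_h\}$ form a matching of ${\rm incom}(L)$; thus by Lemma~\ref{monomialkoenig} the monomial ideal $I({\rm incom}(L))$ is of K\"onig type if and only if ${\rm incom}(L)$ has a matching of size $\tau({\rm incom}(L))$, that is, if and only if ${\rm incom}(L)$ is K\"onig.

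It remains to match this with the K\"onig type condition for $I_L$ itself with respect to the sequence $f_{i,j}$. One direction is Lemma~\ref{remainskoenig}. For the converse I would verify that every subcollection of the $f_{i,j}$ is automatically part of a minimal generating set of $I_L$: the Hibi relations $f_{i,j}$ are $K$-linearly independent, because their leading monomials $x_ix_j$ (an incomparable pair) are pairwise distinct and never coincide with any trailing monomial $(x_i\wedge x_j)(x_i\vee x_j)$ (a comparable pair). Since $I_L$ is generated in degree $2$ and $(I_L)_1 = 0$, the $f_{i,j}$ therefore form a $K$-basis of $(I_L)_2$ and hence a minimal generating set of $I_L$. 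Consequently $I_L$ is of K\"onig type with respect to $f_{i,j}$ and $<_{\rm rev}$ exactly when $\ini_{<_{\rm rev}}(I_L)=I({\rm incom}(L))$ is of K\"onig type, i.e.\ when ${\rm incom}(L)$ is K\"onig. As $L$ is pure, Theorem~\ref{incimparability} then yields that ${\rm incom}(L)$ is K\"onig if and only if it is bipartite, giving (i) $\iff$ (iii) and completing the chain of equivalences.

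The main obstacle is the last reduction, namely the claim that an arbitrary chosen subcollection of the $f_{i,j}$ is part of a minimal generating set of $I_L$; without it the translation from the K\"onig type condition to the purely combinatorial matching/cover condition on ${\rm incom}(L)$ is not clean. I expect this to rest entirely on the linear independence of the Hibi relations together with generation in degree $2$, which follows from the meet-join versus incomparable-pair monomial distinction described above, so the remaining steps are then bookkeeping with Lemmas~\ref{dilworth},~\ref{monomialkoenig},~\ref{remainskoenig},~\ref{revlex} and Theorem~\ref{incimparability}.
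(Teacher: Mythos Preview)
Your proof is correct and follows essentially the same route as the paper: both hinge on translating the K\"onig type condition for $I_L$ into the K\"onig property of ${\rm incom}(L)$ and then applying Theorem~\ref{incimparability} and Lemma~\ref{dilworth}. The paper organizes the cycle as (ii) $\Rightarrow$ (i) $\Rightarrow$ (iii) $\Leftrightarrow$ (ii), constructing the regular sequence explicitly from the rank-levels $L_i$ with $|L_i|=2$ and invoking Lemma~\ref{vertexcovernumber} to compute $\tau({\rm incom}(L))$, whereas you short-circuit both steps by observing $\ini_{<_{\rm rev}}(I_L)=I({\rm incom}(L))$ directly and reading off $\height I_L=\tau({\rm incom}(L))$; your explicit verification that the $f_{i,j}$ form a minimal generating set is a detail the paper leaves implicit.
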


\begin{proof}
Let $L = \Jc(P)$ with $|P| = d$ and $|L| = n$.  Since $$\dim \Rc_K[L] = \rank(L) + 1 = d + 1$$ and $\Rc_K[L]$ is Cohen--Macaulay, one has $$\height I_L = n - (d + 1).$$  Let, as before, $L_i$ denote the set of those $\alpha \in L = \Jc(P)$ with $\rank(\alpha) = i$, where $0 \leq i \leq d$.  Let $q \geq 2$ denote the number of those $0 \leq i \leq d$ with $|L_i| = 1$.

(ii) $\Rightarrow$ (i):
Suppose that $L$ is thin.  Lemma \ref{dilworth} says that $|L_i| \leq 2$ for each $0 \leq i \leq d$.   One has $n = q + 2(d - q + 1) = 2d - q + 2$.  Hence $\height I_L = n - (d + 1) = d - q + 1$.  Furthermore, the number of those $i$ with $|L_i| = 2$ is $d - q + 1$.  If $|L_i| = 2$ and $L_i = \{x_{r_i}, x_{r'_i}\}$, then the binomial $f_{r_i,r'_i}$ belongs to the reduced Gr\"obner basis of $I_L$ with respect to $<_{\rm rev}$ and $\ini_{<_{\rm rev}}(f_{r_i,r'_i}) = x_{r_i}x_{r'_i}$.  Clearly, the $d - q + 1$ monomials $x_{r_i}x_{r'_i}$ with $|L_i| = 2$ form a regular sequence.  Hence $I_L$ is of K\"onig type, as desired.

(i) $\Rightarrow$ (iii):
Suppose that $I_L$ is of K\"onig type.  Then there are $n - (d + 1)$ binomials $f_{i, j}$ for which their initial monomials $x_ix_j$ form a regular sequence.  Hence the matching number of ${\rm incom(L)}$ is at least $n - (d + 1)$.  Since Lemma \ref{vertexcovernumber} says that the vertex cover number of ${\rm incom(L)}$ is $n - (d + 1)$, it follows that ${\rm incom(L)}$ is K\"onig.  Now, Theorem \ref{incimparability} guarantees that ${\rm incom(L)}$ is bipartite.

(iii) $\Leftrightarrow$ (ii):
This follows from Lemma \ref{dilworth}.
\end{proof}

\begin{Corollary}
\label{segre}
Let $S = K[x_1, \ldots, x_n]$ and $T = K[y_1, \ldots, y_m]$ denote polynomial rings in $n \geq 2$ and in $m \geq 2$ variables over a field $K$.  Then the defining ideal of the Segre product $S\,\sharp\,T$ is of K\"onig type if and only if either $n = 2$ or $m = 2$.
\end{Corollary}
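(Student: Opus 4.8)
The plan is to exhibit $S\,\sharp\,T$ as a Hibi ring and then read off the answer from Theorem~\ref{binomialhibi}. Recall that $S\,\sharp\,T$ is the toric ring $K[x_iy_j\:\; 1\le i\le n,\ 1\le j\le m]$, whose defining ideal, in the polynomial ring $K[z_{ij}]$ with $z_{ij}\mapsto x_iy_j$, is the ideal of $2\times 2$ minors of the generic matrix $(z_{ij})$. First I would identify this ring with the Hibi ring $\Rc_K[L]$ of the finite distributive lattice $L=\Jc(Q)$, where $Q=C_{n-1}\sqcup C_{m-1}$ is the disjoint union of a chain with $n-1$ elements and a chain with $m-1$ elements. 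Indeed $\Jc(C_{n-1}\sqcup C_{m-1})\iso\Jc(C_{n-1})\times\Jc(C_{m-1})$ is the direct product $[n]\times[m]$ of two chains, and writing the element of $L$ corresponding to the variable $x_{(a,b)}$ as the pair $(a,b)$, the Hibi relation attached to an incomparable pair $(a,b),(c,d)$ with $a<c$ and $b>d$ has meet $(a,d)$ and join $(c,b)$, so it reads $x_{(a,b)}x_{(c,d)}-x_{(a,d)}x_{(c,b)}$. Under $x_{(a,b)}\leftrightarrow z_{ab}$ this is exactly the minor $z_{ab}z_{cd}-z_{ad}z_{cb}$. Hence the defining ideal of $S\,\sharp\,T$ is precisely $I_L$, and the distinguished binomials $f_{i,j}$ of Theorem~\ref{binomialhibi} are the determinantal generators.

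With this dictionary in hand, Theorem~\ref{binomialhibi} applies directly: $I_L$ is of K\"onig type (with respect to the $f_{i,j}$ and $<_{\rm rev}$, as in that theorem) if and only if $L$ is thin. So the problem reduces to deciding when $L=[n]\times[m]$ is thin.

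For this I would use Lemma~\ref{dilworth}: every finite distributive lattice is pure, so $L$ is thin if and only if each rank level $L_i$ satisfies $|L_i|\le 2$. In $[n]\times[m]$ the element $(a,b)$ has rank $(a-1)+(b-1)$, so the $i$-th level is $\{(a,b)\:\; a+b=i+2\}$; counting lattice points on these antidiagonals shows that the largest level has exactly $\min(n,m)$ elements. Therefore $L$ is thin if and only if $\min(n,m)\le 2$, and since $n,m\ge 2$ by hypothesis this holds if and only if $n=2$ or $m=2$. Combining with the previous paragraph gives the corollary.

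The only real work is the identification in the first paragraph: one must verify both that $S\,\sharp\,T$ is the Hibi ring of the product of two chains and that, under this isomorphism, the natural generators $f_{i,j}$ together with $<_{\rm rev}$ of Theorem~\ref{binomialhibi} match the determinantal data of the Segre ideal. Once that dictionary is fixed, the statement is an immediate specialization of Theorem~\ref{binomialhibi}, and the remaining combinatorics (computing the maximal antidiagonal of $[n]\times[m]$) is routine. I would also flag that, consistently with Theorem~\ref{binomialhibi}, ``of K\"onig type'' in the corollary is to be read with respect to these binomials $f_{i,j}$ and the order $<_{\rm rev}$; indeed for the general notion one can sometimes choose other minimal generators and another term order, so the restriction to $f_{i,j}$ and $<_{\rm rev}$ is what makes the clean equivalence hold.
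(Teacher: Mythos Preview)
Your proposal is correct and follows essentially the same approach as the paper: identify the Segre product with the Hibi ring $\Rc_K[\Jc(Q)]$ for $Q$ the disjoint union of two chains (of lengths $n-2$ and $m-2$, i.e., with $n-1$ and $m-1$ elements), then apply Theorem~\ref{binomialhibi} and check that the product lattice $[n]\times[m]$ is thin precisely when $\min(n,m)=2$. You supply more detail than the paper on both the identification and the thinness computation, and your closing remark about the implicit qualifier ``with respect to $f_{i,j}$ and $<_{\rm rev}$'' is apt.
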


\begin{proof}
Let $P$ be a finite poset which is the disjoint union of the chain of length $n - 2$ and the chain of length $m - 2$.  Then the Segre product $S\,\sharp\,T$ coincides with $\Rc_K[\Jc(P)]$.  The finite distributive lattice $L = \Jc(P)$ is thin if and only if either $n = 2$ or $m = 2$.  Now, Theorem \ref{binomialhibi} guarantees that the defining ideal of $S\,\sharp\,T$ is of K\"onig type if and only if either $n = 2$ or $m = 2$, as desired.
\end{proof}

By using Lemma~\ref{colon} can  give an explicit description of the  canonical module of $\Rc_K[L]$ when $L$ is thin, in the same way as we did it for edge  ideals and binomial edge ideals. More generally, let $L$  be a distributive lattice. We call two   incomparable elements  $\alpha,\beta\in L$ a {\em basic pair} of $L$,  if $\alpha$ and $\beta$ cover $\alpha\wedge \beta$ and $\alpha\vee \beta$ covers $\alpha$ and $\beta$. If $L$ is thin, the binomials $f_{i,j}$ in Theorem~\ref{binomialhibi} are just the binomials $f_{\alpha,\beta}=x_{\alpha}x_{\beta} - x_{\alpha \wedge  \beta} x_{\alpha \vee  \beta}$ where  $\alpha$ and $\beta$ form  a basic pair in $L$. The elements  $\alpha, \beta, \alpha\wedge \beta, \alpha\vee \beta$ are  the corners of a square, also called a cell, whose  four edges are $\{\alpha,\alpha\wedge \beta\}$, $\{\alpha,\alpha\vee \beta\}$, $\{\beta,\alpha\wedge \beta\}$ and $\{\beta,\alpha\vee \beta\}$.  For example,  the lattice in Figure~\ref{simpleexample} has three cells. One of the cells has the corners $\{a,b\}$, $\{b\}$, $\{b,d\}$ and $\{a, b,d\}$. The set of cells of $L$ is  a configuration of cells $\Cc$, as defined in \cite{HH2}. We denote by  $I_\Cc$ the ideal generated by the binomials $f_{\alpha,\beta}$ for which $\alpha$ and $\beta$ is  a basic pair. By Theorem~\ref{binomialhibi}, the generators of $I_\Cc$ form a regular sequence and  $\height I_\Cc=\height I_L$. Since $I_L$ is Cohen--Macaulay, it follows that $(I_\Cc:I_L )/I_\Cc$ is the canonical module $\Rc_K[L]$.

In order compute $I_\Cc:I_L$ we use the fact that $I_\Cc$ is a radical ideal because  its initial ideal is squarefree, and we use the presentation
\[
I_\Cc=\Sect_WP_W(\Cc),
\]
where intersection is taken over all admissible sets $W$  of $\Cc$, see  \cite[Theorem 3.2]{HH2}. Recall that $W$ is called {\em admissible}, if whenever $W$ intersected with the corners of a cell of $\Cc$ is nonempty, then $W$ contains an edge of this cell. The  prime ideal   $P_W(\Cc)$ is the ideal generated by the variables corresponding to $W$ and the inner minors of $\Cc'$, where $\Cc'$ is the set of cells of $\Cc$ which do not intersect $W$. Note that the union of the cells of $\Cc'$ is a disjoint union of distributive lattices, say $L_1,\ldots, L_m$. Then the ideal $J_W$ generated by the inner minors of $\Cc'$ is just $\sum_{i=1}^m I_{L_i}$.

By applying Lemma~\ref{colon} we find that
\[
I_\Cc:I_L=\Sect_{W\atop W\neq \emptyset}(W,J_W), \quad \text{$W$ is admissible and $\height (W,J_W)=\height I_L$}.
\]
We come back to the example given in Figure~\ref{simpleexample}. In order to simplify notation we relabel the vertices of $L$ as follows: $1=\emptyset$, $2= \{a\}$, $3=\{b\}$, $4=\{a,b\}$, $5=\{b,d\}$, $6=\{a,b,c\}$, $7=\{a,b,d\}$ and $8=\{a,b,c,d\}$.

The admissible sets $W$ are $\{1,2\}$, $\{6,8\}$, $\{1,5,3\}$,$\{2,4,6\}$, $\{5,7,8\}$,  $\{3,4,6\}$ and $\{3,4,7\}$. Then for example
$P_W(\Cc)=(x_1,x_2, x_4x_5-x_3x_7, x_6x_7-x_4x_8,x_6x_5-x_3x_8)$ for $W=\{1,2\}$, while for $W=\{3,4,7,\}$ we have $P_W(\Cc)=(x_3,x_4,x_7)$. The only  ideals of height $3$ (which is the height of $I_\Cc$)   among the prime ideals $P_W(\Cc)$ are the ideals $(x_3,x_4,x_7)$, $(x_2,x_4,x_7)$ and $(x_3,x_4,x_6)$. Therefore.
\[
I_\Cc:I_L=(x_3,x_4,x_7)\sect (x_2,x_4,x_7)\sect (x_3,x_4,x_6)=(x_4, x_2x_3, x_3x_7, x_6x_7).
\]
In \cite[Theorem 3.3]{HH2} the minimal prime ideals among the $\P_W(\Cc)$ are  described,  in general.

\medskip
In Theorem~\ref{binomialhibi} we characterized for which finite distributive lattices $L$ the toric ideal $I_L$ of the Hibi ring $\Rc_K[L]$ is of K\"onig type with respect to the reverse lexicographic monomial order. One may ask the same question for any other monomial order. We do not  have a complete answer to this question.  But it is quite obvious that $I_L$ being of K\"onig type is rather restrictive, no matter which monomial order we choose. Indeed, suppose that $I_L$ is of K\"onig type for the monomial order $<$. Then $\ini_<(I_L)$ contains a regular sequence $u_1,\ldots,u_h$ of monomials of degree $2$, where $h=\height I_L$. Let $T$ be the set of join irreducible elements of $L$. Then $h=|L|-|T|-1$. It follows that if $I_L$ is of K\"onig type, then $2(|L|-|T|-1)\leq |L|$,  equivalently, $|L|\leq 2(|T|+1)$.

Consider for example the Boolean lattice $B_n$ of rank $n$. Then $|B_n|=2^n$ and $|T|=n$ and the inequality $2^n\leq 2(n+1)$ is satisfied only if $n\leq 3$. So for $n\geq 4$, $I_{B_n}$ is never of K\"onig type.

On the other hand,  $I_{B_2}$ is a principle ideal, and so is of  K\"onig type, and $I_{B_3}$ is of K\"onig type with respect to the lexicographic monomial order. Indeed, let us  label $B_3$  as indicated in Figure~\ref{Boolean}.

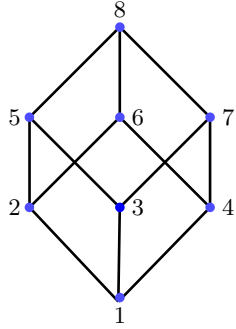
\begin{figure}[hbt]
\label{Boolean}
\begin{center}
\psset{unit=0.4cm}
\begin{pspicture*}(5.947228381374732,1.9403991130820422)(25.051441241685165,14.995831485587589)
\psline[linewidth=1.pt](16.,14.)(13.,11.)
\psline[linewidth=1.pt](16.,14.)(19.,11.)
\psline[linewidth=1.pt](13.,11.)(16.,8.)
\psline[linewidth=1.pt](19.,11.)(16.,8.)
\psline[linewidth=1.pt](16.,11.)(13.,8.)
\psline[linewidth=1.pt](16.,11.)(19.,8.)
\psline[linewidth=1.pt](13.,8.)(15.951662971175182,4.8672283813747255)
\psline[linewidth=1.pt](19.,8.)(15.951662971175182,4.8672283813747255)
\psline[linewidth=1.pt](16.,14.)(16.,11.)
\psline[linewidth=1.pt](13.,11.)(13.,8.)
\psline[linewidth=1.pt](19.,11.)(19.,8.)
\psline[linewidth=1.pt](16.,8.)(15.951662971175182,4.8672283813747255)
\begin{scriptsize}
\psdots[dotstyle=*,linecolor=ududff](16.,14.)
\rput[ll] (15.8, 14.6) {$8$}
\psdots[dotstyle=*,linecolor=ududff](13.,11.)
\rput[ll] (12.3, 11) {$5$}
\psdots[dotstyle=*,linecolor=ududff](19.,11.)
\rput[ll] (19.4, 11) {$7$}
\psdots[dotstyle=*,linecolor=ududff](16.,11.)
\rput[ll] (16.4, 11) {$6$}
\psdots[dotstyle=*,linecolor=ududff](13.,8.)
\rput[ll] (12.3, 8) {$2$}
\psdots[dotstyle=*,linecolor=ududff](19.,8.)
\rput[ll] (19.4, 8) {$4$}
\psdots[dotstyle=*,linecolor=ududff](16,5)
\rput[ll] (15.8, 4.4) {$1$}
\psdots[dotstyle=*,linecolor=blue](16.,8.)
\rput[ll] (16.4, 8) {$3$}
\end{scriptsize}
\end{pspicture*}
\end{center}
\caption{A Boolean lattice $B_3$}
\label{pininside}
\end{figure}

Then
\begin{eqnarray*}
I_{B_3}&=& (x_2x_3-x_1x_5,x_3x_4-x_1x_7,x_1x_6-x_2x_4,x_5x_6-x_2x_8,x_6x_7-x_4x_8,\\
&& x_2x_7-x_1x_8,x_4x_5-x_1x_8,x_5x_7-x_3x_8, x_3x_6-x_1x_8).
\end{eqnarray*}
With respect to the lexicographic monomial order  induced by $x_1>x_2>\cdots >x_8$ we find that
\[
\ini_<(I_{B_3})=
(x_1x_8, x_3x_8, x_3x_6, x_2x_7, x_4x_8, x_2x_8, x_1x_6, x_1x_7, x_1x_5).
\]
This initial ideal contains the regular sequence
\[
x_3x_6,   x_2x_7,  x_4x_8,  x_1x_5.
\]

The corresponding ideal whose initial monomials  are this sequence of monomials  is the ideal
\[
J=(x_3x_6-x_4x_5,x_2x_7-x_3x_6,x_4x_8-x_6x_7,x_1x_5-x_2x_3).
\]
Since $\height I_{B_3}=4$, this shows  $I_{B_3}$ is of König type with respect to the lexicographic monomial order.

\end{document}